\title{{\it A Priori} and {\it A Posteriori} Error Control of Discontinuous Galerkin Finite Element Methods for the Von K\'{a}rm\'{a}n Equations}
\author{Carsten Carstensen\footnote{
  Department of Mathematics, Humboldt-Universit\"{a}t zu Berlin, 10099 Berlin, Germany.
  Distinguished Visiting Professor, Department of Mathematics, Indian institute of Technology Bombay, Powai, Mumbai-400076. Email. cc@math.hu-berlin.de}        
$\;\;$ Gouranga Mallik\footnote{Department of Mathematics, Indian Institute of Technology Bombay, Powai, Mumbai 400076, India. Email. gouranga@math.iitb.ac.in}
$\;\;$ Neela Nataraj\footnote{Department of Mathematics, Indian Institute of Technology Bombay, Powai, Mumbai 400076, India. Email. neela@math.iitb.ac.in}
}
\newcommand{\ubar}[1]{\underaccent{\bar}{#1}}
\chardef\bslash=`\\ 
\newtheorem{thm}{Theorem}[section]
\newtheorem{lem}[thm]{Lemma}
\theoremstyle{definition}
\newtheorem{defn}{Definition}[section]
\theoremstyle{remark}
\newtheorem{rem}{Remark}[section]
\numberwithin{equation}{section}
\newcommand{\bB}{\mathbb B}
\newcommand{\bN}{\mathbb N}
\newcommand{\bR}{\mathbb R}
\newcommand{\cA}{\mathcal A}
\newcommand{\cE}{\mathcal E}
\newcommand{\cT}{\mathcal T}
\newcommand{\cN}{\mathcal N}
\newcommand{\bH}{\boldsymbol{H}}
\newcommand{\bX}{\boldsymbol{X}}
\newcommand{\bS}{\boldsymbol{S}}
\newcommand{\lt}{L^2(\Omega)}
\newcommand{\hto}{H^2_0(\Omega)}
\newcommand{\bYh}{\boldsymbol{Y}_h}
\newcommand{\St}{P_2(\cT)}
\newcommand{\Stb}{{\boldsymbol{P}_2}(\cT)}
\newcommand{\vket}{von K\'{a}rm\'{a}n equations }
\newcommand{\integ}{\int_\Omega}
\newcommand{\sik}{\sum_{K\in\mathcal{T}}\int_K}
\newcommand{\divc}{\mathrm{div}}
\newcommand{\se}{\sum_{E\in \mathcal{E}}\int_E}
\newcommand{\sie}{\sum_{E\in \mathcal{E}(\Omega)}\int_E}
\newcommand{\ip}{{\rm IP}}
\newcommand{\fl}{\;\text{ for all }}
\newcommand{\half}{\frac{1}{2}}
\newcommand{\trinl}{\ensuremath{|\!|\!|}}
\newcommand{\trinr}{\ensuremath{|\!|\!|}}
\newcommand{\trinlNH}{\ensuremath{|\!|\!|}}
\newcommand{\trinrNH}{\ensuremath{|\!|\!|}}
\newcommand{\dx}{{\rm\,dx}}
\newcommand{\ds}{{\rm\,ds}}
\newcommand{\cof}{{\rm cof}}
\newcommand{\dg}{{\rm dG}}
\newcommand{\BFS}{Bogner-Fox-Schmit }
\newcommand{\Holder}{H\"{o}lder }
\begin{document}
\date{}	
\maketitle

\begin{abstract} 
This paper analyses discontinuous Galerkin finite element methods (DGFEM) to approximate a regular solution to the von K\'{a}rm\'{a}n equations defined on a polygonal
domain. A discrete inf-sup condition sufficient for the stability of the discontinuous
Galerkin discretization of a well-posed linear problem is established and this allows
the proof of local existence and uniqueness of a discrete solution to the non-linear problem
with a Banach fixed point theorem. The Newton scheme is locally second-order convergent and appears to be  a robust solution strategy up to machine precision. A comprehensive a~priori and a~posteriori energy-norm error analysis relies on one sufficiently large stabilization parameter and sufficiently fine triangulations. In case the other stabilization parameter
degenerates towards infinity, the DGFEM reduces to a novel $C^0$ interior penalty method (IPDG). In contrast to the known $C^0$-IPDG due to Brenner {\it et al} \cite{BS_C0IP_VKE}, the overall
discrete formulation maintains symmetry of the trilinear form in the first two components -- despite the general non-symmetry of the discrete nonlinear problems. Moreover, a reliable and efficient a~posteriori error analysis immediately follows for the DGFEM of this paper, while the
different norms in the known $C^0$-IPDG lead to complications with some non-residual type remaining terms. Numerical experiments confirm the best-approximation results and the
equivalence of the error and the error estimators. A related adaptive mesh-refining algorithm leads to optimal empirical convergence rates for a non convex domain.
\end{abstract}

\section{Introduction}
The discontinuous Galerkin finite element methods (DGFEM) have become popular for the numerical solution of a large range of problems in partial differential equations, which include linear and nonlinear problems, convected-dominated diffusion for second- and fourth-order elliptic problems.  Their advantages are well-known;  the flexibility offered by the discontinuous basis functions eases the global finite element assembly and the hanging nodes in mesh generation helps to handle complicated geometry. The continuity restriction for conforming FEM is relaxed, thereby making it an interesting choice for adaptive mesh refinements. On the other hand, conforming finite element methods for plate problems demand $C^1$ continuity and involve complicated higher-order finite elements. The simplest examples are Argyris finite element with $21$ degrees of freedom in a triangle and \BFS element with $16$ degrees of freedom in a rectangle. 

\medskip
Nonconforming \cite{Morley68}, mixed and hybrid \cite{BrezziFortin91,BoffiBrezziFortin13} finite element methods are also alternative approaches that have been used to relax the $C^1$-continuity. Discontinuous Galerkin methods are well studied for linear fourth-order elliptic problems, e.g., the $hp$-version of the nonsymmetric interior penalty DGFEM (NIPG) \cite{MozoSuli03}, the $hp$-version of the symmetric interior penalty DGFEM (SIPG) \cite{MozoSuliBosing07} and a combined analysis of NIPG and SIPG in \cite{MozoSuli07}. The literature on {\it a posteriori} error analysis for biharmonic problems with DGFEM include \cite{Georgoulis2011} and a quadratic $C^0$-interior penalty method \cite{BGS10}. The medius analysis in \cite{Gudi10} combines ideas of {\it a priori} and {\it a posteriori} analysis to establish error estimates for DGFEM under minimal regularity assumptions on the exact solution.

\medskip
This paper concerns discontinuous Galerkin finite element methods for the approximation of regular solution to the von K\'{a}rm\'{a}n equations, which describe the deflection of very thin elastic plates. Those plates are  modeled by a semi-linear system of fourth-order partial differential equations and can be described as follows. For a given load function $f\in\lt$, seek $u,v \in H^2_0(\Omega)$ such that
\begin{equation}\label{vkedG}
\Delta^2 u =[u,v]+f \text{ and }\Delta^2 v =-\half[u,u] \quad
 \text{ in } \Omega
\end{equation}
with the biharmonic operator $\Delta^2$ and the von K\'{a}rm\'{a}n bracket $[\bullet,\bullet]$,
$\displaystyle\Delta^2\varphi:=\varphi_{xxxx}+2\varphi_{xxyy}+\varphi_{yyyy}$, and $\displaystyle
[\eta,\chi]:=\eta_{xx}\chi_{yy}+\eta_{yy}\chi_{xx}-2\eta_{xy}\chi_{xy}=\cof(D^2\eta):D^2\chi$ for the co-factor matrix $\cof(D^2\eta)$ of $D^2\eta$. The colon $:$ denotes the scalar product of two $2\times 2$ matrices.

\medskip \noindent
In \cite{Brezzi}, conforming finite element approximations for the von K\'{a}rm\'{a}n equations are analyzed and an error estimate in energy norm is derived for approximations of regular solutions. Mixed and hybrid methods reduce the system of fourth-order equations into a system of second-order equations \cite{Miyoshi,BrezziRappazRaviart80,BrezziRappazRaviart81,Reinhart}. Conforming finite element methods for the canonical von K\'{a}rm\'{a}n equations have been proposed and error estimates in energy, $H^1$ and $L^2$ norms are established in \cite{GMNN_BFS} under realistic regularity assumption on the exact solution. Nonconforming FEMs have also been analyzed for this problem \cite{GMNN_Morley}. An {\it a priori} error analysis for a $C^0$ interior penalty method of this problem is studied in \cite{BS_C0IP_VKE}. Recently, an abstract framework for nonconforming discretization of a class of semilinear elliptic problems which include \vket is analyzed in \cite{CCGMNN_Semilinear}. 

\medskip
In this paper, discontinuous Galerkin finite element methods are applied to approximate the regular solutions of the von K\'{a}rm\'{a}n equations. To highlight the contribution, under  minimal regularity assumption of the exact solution, optimal order {\it a priori} error estimates are obtained and a reliable and efficient a posteriori error estimator is designed. Moreover, {\it a priori} and {\it a posteriori} error estimates for a $C^0$ interior penalty method for the von K\'{a}rm\'{a}n equations are recovered as a special case. The comprehensive {\it a priori} analysis in \cite{BS_C0IP_VKE} controls the error in the stronger norm $\|\cdot\|_h\equiv \|\bullet\|_{\widetilde{\ip}}$ and therefore requires a more involved mathematics and a trilinear form $b_{\widetilde{IP}}$ without symmetry in the first two variables, cf. Remark~\ref{Brenner_b}.

\medskip \noindent 
The remaining parts of the paper are organized as follows. Section 2 describes some preliminary results and introduces  discontinuous Galerkin finite element methods for von K\'{a}rm\'{a}n equations. Section 3 discusses some auxiliary results required for {\it a priori} and {\it a posteriori} error analysis. In Section~4, a discrete inf-sup condition is established for a linearized problem for the proof of the existence, local uniqueness and error estimates of the discrete solution of the non-linear problem. In Section~5, a reliable and efficient {\it a posteriori} error estimator is derived. Section~6 derives {\it a priori} and {\it a posteriori} error estimates for a $C^0$ interior penalty method.  Section~7 confirms the theoretical results in various numerical experiments and establishes an adaptive mesh-refining algorithm. 

\medskip
Throughout the paper, standard notation on Lebesgue and Sobolev spaces and their norms are employed.
The standard semi-norm and norm on $H^{s}(\Omega)$ (resp. $W^{s,p} (\Omega)$) for $s>0$ are denoted by $|\bullet|_{s}$ and $\|\bullet\|_{s}$ (resp. $|\bullet|_{s,p}$ and $\|\bullet\|_{s,p}$ ).  Bold letters refer to vector valued functions and spaces, e.g. $\bX= X\times X$. The positive constants $C$ appearing in the inequalities denote generic constants which do not depend on the mesh-size. The notation $A\lesssim B$ means that there exists a generic constant $C$ independent of the mesh parameters and independent of the stabilization parameters $\sigma_1$ and $\sigma_2\geq 1$ such that $A \leq CB$; $A\approx B$ abbreviates $A\lesssim B\lesssim A$.

\section{Preliminaries}
This section introduces weak and discontinuous Galerkin (dG) formulations for the von K\'{a}rm\'{a}n equations.

\subsection{Weak formulation}
The weak formulation of von K\'{a}rm\'{a}n equations \eqref{vkedG} reads: Given $f\in\lt$, seek  $u,v\in \: X:=\hto$ such that
\begin{subequations}\label{wform}
	\begin{align}
	& a(u,\varphi_1)+ b(u,v,\varphi_1)+b(v,u,\varphi_1)=l(\varphi_1)   \fl\varphi_1\in X\label{wforma}\\
	& a(v,\varphi_2)-b(u,u,\varphi_2)   =0            \fl\varphi_2 \in X.\label{wformb}
	\end{align}
\end{subequations}
Here and throughout the paper, for all $\eta,\chi,\varphi\in X$, 
\begin{align}
&a(\eta,\chi):=\integ D^2 \eta:D^2\chi\dx,\; \; b(\eta,\chi,\varphi):=-\half\integ [\eta,\chi]\varphi\dx, \text{ and } l(\varphi):=\int_{\Omega}f\varphi\dx. \label{defnab}
\end{align}
Given $F=(f,0)\in L^2(\Omega)\times L^2(\Omega)$, the combined vector form seeks $\Psi=(u,v)\in \bX:=X\times X\equiv\hto\times\hto$ such that
\begin{equation}\label{vform_cts_dG}
N(\Psi;\Phi):=A(\Psi,\Phi)+B(\Psi,\Psi,\Phi)-L(\Phi)=0\fl \Phi\in \bX,
\end{equation}
where, for all $\Xi=(\xi_1,\xi_2),\Theta=(\theta_1,\theta_2)$, and $\Phi=(\varphi_1,\varphi_2)\in  \bX$,
\begin{align*}
& A(\Theta,\Phi):=a(\theta_1,\varphi_1)+a(\theta_2,\varphi_2),\\
&B(\Xi,\Theta,\Phi):=b(\xi_1,\theta_2,\varphi_1)+b(\xi_2,\theta_1,\varphi_1)-b(\xi_1,\theta_1,\varphi_2)\text{ and } L(\Phi):=l(\varphi_1).
\end{align*}
Let $\trinl\bullet\trinr_2$ denote the product norm on $\bX$  defined by $\trinl\Phi\trinr_2:=\left(|\varphi_1|_{2,\Omega}^2+|\varphi_2|_{2,\Omega}^2\right)^{1/2}$ for all $\Phi=(\varphi_1,\varphi_2)\in \bX$. It is easy to verify that the following boundedness and ellipticity properties hold
\begin{align*}
&{A}(\Theta,\Phi)\leq \trinl\Theta\trinr_2 \: \trinl\Phi\trinr_2,\: {A}(\Theta,\Theta) \geq \trinl\Theta\trinr_2^2,\\
&\quad B(\Xi, \Theta, \Phi) \leq  C \trinl\Xi\trinr_2 \: \trinl\Theta\trinr_2 \: \trinl\Phi\trinr_2.
\end{align*}
Since $b(\bullet,\bullet,\bullet)$ is symmetric in first two variables, the trilinear form $B(\bullet,\bullet,\bullet)$ is symmetric in first two variables.

\medskip
For results regarding the existence of solution to \eqref{vform_cts_dG}, regularity and bifurcation phenomena, we refer  to \cite{CiarletPlates, Knightly, BergerFife66, Berger,BergerFife, BlumRannacher}. It is well known \cite{BlumRannacher} that on a polygonal domain $\Omega$, for given $f\in H^{-1}(\Omega)$, the solutions $u,v$ belong to $\hto\cap H^{2+\alpha}(\Omega)$, for the index of elliptic regularity $\alpha\in (\half,1]$ determined by the interior angles of $\Omega$. Note that when $\Omega$ is convex; $\alpha=1$; that is, the solution belongs to $\hto\cap H^3(\Omega) $. Unless specified otherwise, the parameter $\alpha$ is supposed to satisfy $1/2<\alpha\leq 1$. 

\medskip
Throughout the paper, we consider the approximation of a regular solution \cite{Brezzi,GMNN_BFS} $\Psi$ to the non-linear operator $N(\Psi; \Phi)=0 \; $ for all $\Phi \in {\bX}$  of \eqref{vform_cts_dG} in the sense that the bounded derivative $DN(\Psi)$ of the operator $N$ at the solution $\Psi$ is an isomorphism in the Banach space; this is equivalent to an inf-sup condition 
\begin{align}\label{inf-sup_dG}
0<\beta:=\inf_{\substack{\Theta\in \bX\\ \trinl\Theta\trinr_2=1}}\sup_{\substack{\Phi\in \bX\\ \trinl\Phi\trinr_2=1}}\big{(}A(\Theta,\Phi)+2B(\Psi,\Theta,\Phi)\big{)}.
\end{align}

\subsection{Triangulations}
Let  $\cT$ be a shape-regular \cite{Braess} triangulation of $\Omega$ into closed triangles. 
The set of all internal vertices (resp. boundary vertices) and  interior edges (resp.  boundary edges)  of the triangulation $\cT$ are denoted by $\cN (\Omega)$ (resp.  $\cN(\partial\Omega)$) and $\cE (\Omega)$ (resp. $\cE (\partial\Omega)$).
Define a piecewise constant mesh function $h_{\cT}(x)=h_K={\rm diam} (K)$ for all $x \in K$, $ K\in \cT$, and set $h:=\max_{K\in \cT}h_K$. Also define a piecewise constant edge-function on $\cE:=\cE(\Omega)\cup \cE(\partial\Omega)$ by $h_{\cE}|_E=h_E={\rm diam}(E)$ for any $E\in \cE$. Set of all edges of $K$ is denoted by $\cE(K)$. Note that for a shape-regular family, there exists a positive constant $C$ independent of $h$ such that any $K\in\cT$ and any $E\in \partial K$ satisfy 
\begin{equation}\label{shape_reg_const}
Ch_K\leq h_E\leq h_K.
\end{equation}
\noindent Let $P_r( K)$ denote the set of all polynomials of degree less than or equal to $r$ and $\displaystyle P_r(\cT):=\left\{\varphi\in L^2(\Omega):\,\forall K\in\cT,\varphi|_{K}\in P_r(K)\right\}$ and write $\boldsymbol{P}_r(\cT):=P_r(\cT)\times P_r(\cT)$ for pairs of piecewise polynomials.
For a nonnegative integer $s$, define the broken Sobolev space for the subdivision $\cT$  as
\begin{equation*}
H^s(\cT)=\left\{\varphi\in\lt: \varphi|_K\in H^{s}(K)\fl K\in \cT \right\}
\end{equation*}
with the broken Sobolev semi-norm  $|\bullet|_{H^s(\cT)}$ and norm $\| \bullet\|_{H^s(\cT)}$ defined by
\begin{equation*}
|\varphi|_{H^s(\cT)}=\bigg{(}\sum_{K\in\cT} |\varphi|_{H^{s}(K)}^2\bigg{)}^{1/2}\text{ and }
\|\varphi\|_{H^s(\cT)}=\bigg{(}\sum_{K\in\cT}\|\varphi\|_{H^{s}(K)}^2\bigg{)}^{1/2}.
\end{equation*}
Define the jump $[\varphi]_E=\varphi|_{K_+}-\varphi|_{K_-}$ and the average $\langle\varphi\rangle_E=\half\left(\varphi|_{K_+}+\varphi|_{K_-}\right)$ across the interior edge $E$ of $\varphi\in H^1(\cT)$ of the adjacent triangles  $K_+$ and $K_-$. Extend the definition of the jump and the average to an edge lying in boundary by $[\varphi]_E=\varphi|_E$ and $\langle\varphi\rangle_E=\varphi|_E$ for $E\in \cE(\partial\Omega)$ owing to the homogeneous boundary conditions.  For any vector function, jump and average are understood componentwise.
Set $\Gamma\equiv\bigcup_{E\in\cE}E$. 

\subsection{Discrete norms and bilinear forms}
\noindent For $1/2<\alpha\leq 1$, abbreviate $Y_h:=(X\cap H^{2+\alpha}(\Omega))+\St$ and $\bYh:=Y_h\times Y_h$. For all $\eta,\chi\in Y_h$, $\varphi\in X+\St$, introduce the bilinear, trilinear and linear forms by
\begin{align*}
& a_{\dg}(\eta,\chi):=\sum_{K\in\cT}\int_K D^2\eta:D^2\chi\dx-\left(J(\eta,\chi)+ J(\chi,\eta)\right) +J_{\sigma_1,\sigma_2}(\eta,\chi),\\
& b_{\dg}(\eta,\chi,\varphi):=-\half \sum_{K\in\cT}\int_K [\eta,\chi]\varphi
\dx, \quad  l_{\dg}(\varphi):= \sum_{K\in\cT}\int_K  f\varphi\dx,\\
&J(\eta,\chi)=\sum_{E\in\cE}\int_E [\nabla\chi]_E\cdot\langle D^2\eta\;\nu_E\rangle_E \ds,
\end{align*}
with $\sigma_1>0$ and $\sigma_2>0$ to be suitably chosen in the jump terms across any edge $E\in\cE$ with unit normal vector $\nu_E$ and
\begin{align*}
& J_{\sigma_1,\sigma_2}(\eta,\chi):=\sum_{E\in\cE}\frac{\sigma_1}{h_E^3}\int_E[\eta]_E[\chi]_E\ds+\sum_{E\in\cE}\frac{\sigma_2}{h_E}\int_E[\nabla \eta\cdot\nu_E]_E[\nabla \chi\cdot\nu_E]_E\ds.
\end{align*}

\medskip

The discontinuous Galerkin (dG) finite element formulation of \eqref{vkedG} seeks $(u_{\dg},v_{\dg})\in \Stb:=\St\times \St$ such that, for all $(\varphi_1, \varphi_2)\in \Stb$,
\begin{align}
&a_{\dg}(u_{\dg},\varphi_1)+b_{\dg}(u_{\dg},v_{\dg},\varphi_1)+b_{\dg}(v_{\dg},u_{\dg},\varphi_1)=l_{\dg}(\varphi_1), \label{wformdg1}\\
& a_{\dg}(v_{\dg},\varphi_2)-b_{\dg}(u_{\dg},u_{\dg},\varphi_2)=0. \label{wformdg2}
\end{align}
\noindent The combined vector form seeks $\Psi_{\dg}\equiv (u_{\dg},v_{\dg})\in \Stb$ such that, for all $\Phi_{\dg}\in \Stb$,
\begin{equation}\label{vform_d_dG}
N_h(\Psi_{\dg};\Phi_{\dg}):=A_{\dg}(\Psi_{\dg},\Phi_{\dg})+B_{\dg}(\Psi_{\dg},\Psi_{\dg},\Phi_{\dg})-L_{\dg}(\Phi_{\dg})=0,
\end{equation}
where, $\fl\Xi_{\dg}=(\xi_1, \xi_2),\Theta_{\dg}=(\theta_1,\theta_2),\Phi_{\dg}=(\varphi_1,\varphi_2)\in \Stb$,
\begin{align}
&A_{\dg}(\Theta_{\dg},\Phi_{\dg}):=a_{\dg}(\theta_1,\varphi_1)+a_{\dg}(\theta_2,\varphi_2),\\ 
&B_{\dg}(\Xi_{\dg},\Theta_{\dg},\Phi_{\dg}):=b_{\dg}(\xi_{1},\theta_{2},\varphi_{1})+b_{\dg}(\xi_{2},\theta_{1},\varphi_{1})-b_{\dg}(\xi_{1},\theta_{1},\varphi_{2}), \\
&L_{\dg}(\Phi_{\dg}):=l_{\dg}(\varphi_1).
\end{align}
Note that $b_{\dg}(\bullet,\bullet,\bullet)$ is symmetric in the first and second variables, and so is $B_{\dg}(\bullet,\bullet,\bullet)$.

\medskip
\noindent For $\varphi\in H^2(\cT)$ and $\Phi=(\varphi_1,\varphi_2)\in \bH^2(\cT)\equiv H^2(\cT)\times H^2(\cT)$, define the mesh dependent  norms $\|\bullet\|_{\dg}$ and $\trinl\bullet\trinr_{\dg}$ by 
\begin{align*}
&\|\varphi\|_{\dg}^2:=|\varphi|_{ H^2(\cT)}^2+\sum_{E\in\cE}\frac{\sigma_1}{h_E^3}\|[\varphi]_E\|_{L^2(E)}^2+\sum_{E\in\cE}\frac{\sigma_2}{h_E}\|[\nabla \varphi\cdot\nu_E]_E\|_{L^2(E)}^2,\\
&\trinl\Phi\trinr_{\dg}^2:=\|\varphi_1\|_{\dg}^2+\|\varphi_2\|_{\dg}^2.
\end{align*} 

\noindent For $\xi\in Y_h\equiv (X\cap H^{2+\alpha}(\Omega))+\St$ and $\Xi=(\xi_1,\xi_2)\in \bYh\equiv Y_h\times Y_h$, define the auxiliary norms $ \|\bullet\|_h$ and $\trinl\bullet\trinr_h$  by
\begin{align*}
\|\xi\|_h^2:=\|\xi\|_{\dg}^2+\sum_{E\in\cE}\sum_{j,k=1}^{2}\|h_E^{1/2}\langle \partial^2\xi/\partial x_j \partial x_k\rangle_E  \|_{L^2(E)}^2\text{ and } \trinl\Xi\trinr_h^2:=\|\xi_1\|_h^2+\|\xi_2\|_h^2.
\end{align*}

\section{Auxiliary results }
This section discusses some auxiliary results and establishes the boundedness and ellipticity results required for the analysis.
\subsection{Some known operator bounds}
\noindent This subsection recalls a few standard results. Throughout this subsection, the generic multiplicative constant $C\approx 1$ hidden in the brief notation $\lesssim$ depends on the shape regularity of the triangulation $\cT$ and arising parameters like the polynomial degree $r\in \bN_0$ or the Lebesgue index $p$ and the Sobolev indices $\ell,s>1/2$ and $1/2<\alpha\le 1$; $C$ is independent of the mesh-size.

\begin{lem}[Inverse inequality I]\label{inv_ineq_ncfem}\label{inv_ineq_dg} \cite{LasisSuli03,Brenner} There exists a positive constant $C$ that depends on $\ell\in\mathbb{N},\,p\in \bR$ and the shape regularity of the triangulation such that, for $1\leq \ell,\, 2\leq p\leq \infty$, for any $\xi\in P_r(K)$ satisfies
	\begin{align*}
	\|\xi\|_{L^p(K)}&\lesssim h_K^{(2-p)/p}\|\xi\|_{L^2(K)}\text{ and }|\xi|_{H^{\ell}(K)}\lesssim h_K^{-1}|\xi|_{H^{{\ell}-1}(K)},
	\end{align*}	
	for any $K \in \cT$ with $E\subset \cE(K)$, where
	\begin{equation*}
	\|\xi\|_{L^p(E)}\lesssim h_E^{1/p-1/2}\|\xi\|_{L^2(E)}.
	\end{equation*}
\end{lem}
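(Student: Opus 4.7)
The plan is a standard scaling-to-reference-element argument combined with norm equivalence on finite-dimensional polynomial spaces. I fix a reference triangle $\hat K$ of unit diameter and, for each $K\in\cT$, take the unique affine bijection $F_K(\hat x)=B_K\hat x+b_K$ with $F_K(\hat K)=K$. Shape-regularity yields $\|B_K\|\approx h_K$, $\|B_K^{-1}\|\approx h_K^{-1}$, and $|\det B_K|\approx h_K^{2}$ with hidden constants depending only on the shape-regularity. For $\xi\in P_r(K)$ I set $\hat\xi:=\xi\circ F_K\in P_r(\hat K)$.

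For the first inequality, a direct change of variables gives $\|\xi\|_{L^p(K)}\approx h_K^{2/p}\|\hat\xi\|_{L^p(\hat K)}$ and $\|\xi\|_{L^2(K)}\approx h_K\|\hat\xi\|_{L^2(\hat K)}$. Since $P_r(\hat K)$ is finite-dimensional, the norms $\|\cdot\|_{L^p(\hat K)}$ and $\|\cdot\|_{L^2(\hat K)}$ are equivalent on it with a constant depending only on $r$ and $p$, and combining the two scalings produces the exponent $2/p-1=(2-p)/p$ as asserted.

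For the seminorm bound I would first verify the scaling $|\xi|_{H^s(K)}\approx h_K^{1-s}|\hat\xi|_{H^s(\hat K)}$ for $s\ge 0$ (chain rule together with $\|B_K^{-1}\|\approx h_K^{-1}$ and $|\det B_K|\approx h_K^{2}$), which reduces the claim to $|\hat\xi|_{H^\ell(\hat K)}\lesssim |\hat\xi|_{H^{\ell-1}(\hat K)}$ on $P_r(\hat K)$. The mildly subtle point is that $|\cdot|_{H^{\ell-1}(\hat K)}$ is only a seminorm; but its kernel is $P_{\ell-2}(\hat K)$ (with the convention $P_{-1}:=\{0\}$), which sits inside the kernel $P_{\ell-1}(\hat K)$ of $|\cdot|_{H^\ell(\hat K)}$. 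Hence both descend to the finite-dimensional quotient $P_r(\hat K)/P_{\ell-2}(\hat K)$, on which $|\cdot|_{H^{\ell-1}(\hat K)}$ is a genuine norm, and the inequality follows from norm equivalence on a finite-dimensional space. Lifting back and scaling yields $|\xi|_{H^\ell(K)}\lesssim h_K^{-1}|\xi|_{H^{\ell-1}(K)}$.

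For the edge inequality I parametrize $E$ by $\hat E=[0,1]$ through an affine map of Jacobian $h_E$. Since $\xi$ restricted to $E$ is a univariate polynomial of degree at most $r$, one has $\hat\xi:=\xi|_E\circ F_E\in P_r(\hat E)$, and the 1D change of variables gives $\|\xi\|_{L^p(E)}=h_E^{1/p}\|\hat\xi\|_{L^p(\hat E)}$ together with $\|\xi\|_{L^2(E)}=h_E^{1/2}\|\hat\xi\|_{L^2(\hat E)}$. Finite-dimensional norm equivalence on $P_r(\hat E)$ then closes the estimate at the advertised exponent $1/p-1/2$. The only genuinely nontrivial checkpoint in the whole argument is the quotient step needed to turn $|\cdot|_{H^{\ell-1}(\hat K)}$ into a norm; everything else is bookkeeping with affine scalings.
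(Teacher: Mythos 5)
Your proof is correct, including the one genuinely delicate step: passing to the quotient $P_r(\hat K)/P_{\ell-2}(\hat K)$ so that the $H^{\ell-1}$-seminorm becomes a norm before invoking finite-dimensional norm equivalence, and the affine scalings $\|\xi\|_{L^p(K)}\approx h_K^{2/p}\|\hat\xi\|_{L^p(\hat K)}$, $|\xi|_{H^s(K)}\approx h_K^{1-s}|\hat\xi|_{H^s(\hat K)}$, and $\|\xi\|_{L^p(E)}=h_E^{1/p}\|\hat\xi\|_{L^p(\hat E)}$ all carry the correct two- and one-dimensional exponents. The paper itself gives no proof of this lemma but only cites the literature, and your scaling-to-the-reference-element argument is exactly the standard one found in those references, so there is nothing to compare beyond noting the match.
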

\begin{lem}[Trace inequality]\label{trace_inq} The following trace inequalities hold for $K\in\cT$ and $s>1/2$.
	\begin{enumerate}[(a)]
		\item  \cite{DiPetroErn12} $\;\displaystyle
		\|\xi\|_{L^2(\partial K)}\lesssim h_K^{-1/2}\|\xi\|_{L^2(K)}\fl \xi\in P_r(K)$;
		\item  \cite[p. 111]{BrennerOwensSung08}
       $\;\displaystyle \|\xi\|_{L^2(\partial K)}\lesssim {h_K^{s-1/2}}\|\xi\|_{H^s(K)}+h_K^{-1/2}\|\xi\|_{L^2(K)}\text{ for all }\xi\in H^s(K)$.	
	\end{enumerate}	
\end{lem}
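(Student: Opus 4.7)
The plan is to establish both trace inequalities by the standard scaling argument that reduces each estimate to a scale-independent inequality on a reference triangle $\hat K$. Let $F_K:\hat K\to K$ denote the affine map, set $\hat\xi:=\xi\circ F_K$, and recall the elementary scaling relations $\|\xi\|_{L^2(K)}\approx h_K\,\|\hat\xi\|_{L^2(\hat K)}$ and $\|\xi\|_{L^2(\partial K)}\approx h_K^{1/2}\,\|\hat\xi\|_{L^2(\partial\hat K)}$, which come from the affine change of variables together with shape regularity (so that $|\det DF_K|\approx h_K^2$ and $|\partial K|\approx h_K$).

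For part (a), I would first invoke equivalence of norms on the finite-dimensional space $P_r(\hat K)$ to conclude $\|\hat\xi\|_{L^2(\partial\hat K)}\lesssim\|\hat\xi\|_{L^2(\hat K)}$ with a constant depending only on $r$. Combining this with the two scaling relations yields immediately
\[
\|\xi\|_{L^2(\partial K)}\approx h_K^{1/2}\|\hat\xi\|_{L^2(\partial\hat K)}\lesssim h_K^{1/2}\|\hat\xi\|_{L^2(\hat K)}\approx h_K^{-1/2}\|\xi\|_{L^2(K)},
\]
which is the assertion in (a).

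For part (b), since $\xi\in H^s(K)$ is no longer polynomial, norm equivalence on a finite-dimensional space is unavailable; instead the starting point is the classical fractional trace theorem on the reference element, $\|\hat\xi\|_{L^2(\partial\hat K)}\lesssim\|\hat\xi\|_{H^s(\hat K)}$ for $s>1/2$ (where the threshold $s=1/2$ is known to be sharp). Splitting $\|\hat\xi\|_{H^s(\hat K)}^2=\|\hat\xi\|_{L^2(\hat K)}^2+|\hat\xi|_{H^s(\hat K)}^2$ and scaling each piece separately, using $|\xi|_{H^s(K)}\approx h_K^{s-1}\,|\hat\xi|_{H^s(\hat K)}$ together with the $L^2$ scaling above, reproduces the additive form with exponents $h_K^{s-1/2}$ and $h_K^{-1/2}$ as claimed.

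The main obstacle is verifying the scaling of the fractional seminorm for non-integer $s\in(1/2,1)$. This requires a careful change of variables in the Gagliardo double integral
\[
|\xi|_{H^s(K)}^2=\int_K\int_K\frac{|\xi(x)-\xi(y)|^2}{|x-y|^{2+2s}}\dx\dy,
\]
tracking the factor $|\det DF_K|^2\approx h_K^4$ from the measure and the factor $|x-y|^{-2-2s}\approx h_K^{-2-2s}$ from the denominator, which combine to the exponent $h_K^{2-2s}$ and hence the seminorm scaling $h_K^{s-1}$. For integer $s\ge 1$ the same exponent follows directly from the chain rule applied to $D^s(\hat\xi\circ F_K^{-1})$. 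Once this scaling is in hand, the reference-element trace theorem immediately produces the stated inequality with a constant independent of $h_K$, which is the essential feature required in the subsequent dG analysis.
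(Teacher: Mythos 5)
Your argument is sound, but note that the paper does not prove this lemma at all: both parts are quoted from the literature, (a) from a standard text on discontinuous Galerkin methods and (b) from Brenner--Owens--Sung, so your self-contained scaling proof is a genuine addition rather than a reproduction. The structure is the right one: part (a) via finite-dimensional norm equivalence on the reference element, part (b) via the fractional trace theorem $\|\hat\xi\|_{L^2(\partial\hat K)}\lesssim \|\hat\xi\|_{L^2(\hat K)}+|\hat\xi|_{H^s(\hat K)}$ on $\hat K$ followed by separate scaling of the zeroth-order term and the Gagliardo seminorm, and your bookkeeping of the Jacobian factor $h_K^4$ against the kernel factor $h_K^{-2-2s}$ correctly produces $|\xi|_{H^s(K)}^2\approx h_K^{2-2s}|\hat\xi|_{H^s(\hat K)}^2$. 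One slip: the displayed relation ``$|\xi|_{H^s(K)}\approx h_K^{s-1}|\hat\xi|_{H^s(\hat K)}$'' has the factor on the wrong side; the computation you describe gives $|\xi|_{H^s(K)}\approx h_K^{1-s}|\hat\xi|_{H^s(\hat K)}$, equivalently $|\hat\xi|_{H^s(\hat K)}\approx h_K^{s-1}|\xi|_{H^s(K)}$, which is the form actually needed when you substitute back, and indeed your final exponents $h_K^{s-1/2}$ and $h_K^{-1/2}$ come out correctly, so this reads as a transcription error rather than a flaw. Two small points worth tightening if you write this out in full: for non-integer $s>1$ the Gagliardo argument must be applied to the top-order derivatives of $\xi$ rather than to $\xi$ itself, and for general $s>1$ you should justify that $\|\hat\xi\|_{L^2(\hat K)}+|\hat\xi|_{H^s(\hat K)}$ controls the full $H^s(\hat K)$ norm on the fixed reference element (a standard Ehrling/interpolation fact), since the two-term right-hand side of (b) contains no intermediate seminorms.
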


\begin{lem}[Interpolation estimates]\label{interpolant_dG} \cite{BabuskaSuri87} There exists a linear operator
$\Pi_h: H^s(\cT)\to P_r(\cT)$, such that, for $0\leq q\leq s,\;m=\min(r+1,s)$ and $1/2<\alpha\leq 1$,
\begin{align}
\|\varphi-\Pi_{h}\varphi\|_{H^{q}(K)}&\lesssim h_K^{m-q}\|\varphi\|_{H^{s}(K)}\fl K\in\cT\text{ and}\fl\varphi\in H^s(\cT),\label{local_int_est}\\
\|\varphi-\Pi_h\varphi\|_{\dg}&\leq\|\varphi-\Pi_h\varphi\|_{h}\lesssim h^{\alpha}\|\varphi\|_{2+\alpha}\fl\varphi\in H^{2+\alpha}(\Omega).\label{interpolation_est}
\end{align}
\end{lem}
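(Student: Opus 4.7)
The plan is to derive \eqref{local_int_est} essentially by citation and then deduce \eqref{interpolation_est} by a term-by-term analysis of the discrete norm using trace inequalities together with the local estimate.

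First, I would take $\Pi_h$ to be an elementwise polynomial projection, for instance the local $L^2$-orthogonal projection onto $P_r(K)$ or, equivalently for the purpose of this estimate, the construction of Babuška--Suri. The estimate \eqref{local_int_est} is then the standard Bramble--Hilbert/scaling result on a shape-regular triangulation: one maps each $K$ to a reference element $\hat K$, applies the Deny--Lions lemma on $\hat K$ to the polynomial-preserving operator, and scales back by $h_K$ to produce the power $h_K^{m-q}$. This is exactly the Babuška--Suri theorem cited from the reference, so I would invoke it rather than reprove it.

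Second, to prove \eqref{interpolation_est}, I would square the auxiliary norm and bound each of the four contributions separately on a generic $K\in\cT$ with edge $E$ and, at the end, sum over $\cT$. For the broken $H^2$ part, \eqref{local_int_est} with $q=2$, $s=2+\alpha$, $m=2+\alpha$ (which requires $r\geq 2$) gives $|\varphi-\Pi_h\varphi|_{H^2(K)}\lesssim h_K^{\alpha}\|\varphi\|_{H^{2+\alpha}(K)}$. For the $\sigma_1$ jump term, I would apply the trace inequality of Lemma~\ref{trace_inq}(b) to $\xi=\varphi-\Pi_h\varphi$ with $s=2+\alpha$ and then use \eqref{local_int_est} with $q=0$ and $q=2+\alpha$, obtaining $\|\varphi-\Pi_h\varphi\|_{L^2(\partial K)}\lesssim h_K^{3/2+\alpha}\|\varphi\|_{H^{2+\alpha}(K)}$; the factor $h_E^{-3}$ then leaves $h_K^{2\alpha}\|\varphi\|_{H^{2+\alpha}(K)}^{2}$. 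Analogously, for the $\sigma_2$ normal-derivative jump term I would apply Lemma~\ref{trace_inq}(b) to $\nabla(\varphi-\Pi_h\varphi)$ with $s=1+\alpha$, use $q=1$ and $q=1+\alpha$ in \eqref{local_int_est}, and obtain the same $h_K^{2\alpha}$ scaling after dividing by $h_E$. For the extra second-derivative average term in $\|\cdot\|_h$, I would apply Lemma~\ref{trace_inq}(b) to $D^2(\varphi-\Pi_h\varphi)$ with $s=\alpha$, combine with $q=2$ and $q=2+\alpha$ in \eqref{local_int_est}, and the factor $h_E$ from the definition again balances to $h_K^{2\alpha}$. Using \eqref{shape_reg_const} throughout to interchange $h_E$ and $h_K$, summing over $K\in\cT$ and taking a square root yields $\|\varphi-\Pi_h\varphi\|_h\lesssim h^{\alpha}\|\varphi\|_{2+\alpha}$, and the comparison $\|\cdot\|_{\dg}\leq\|\cdot\|_h$ is immediate from the definitions. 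On boundary edges, $[\varphi-\Pi_h\varphi]_E=(\varphi-\Pi_h\varphi)|_E$ is handled by the same trace estimate applied on the boundary triangle, so no separate argument is needed there.

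The routine ingredients (scaling, summation, Bramble--Hilbert) are standard; the only subtle point is the bookkeeping of powers of $h_K$ in each of the four terms, which must be shown to balance exactly to $h_K^{2\alpha}$. The hidden assumption $r\geq 2$ is forced by the broken $H^2$ semi-norm in $\|\cdot\|_{\dg}$; for $1/2<\alpha\leq 1$ the fractional trace inequality of Lemma~\ref{trace_inq}(b) with the indicated $s$ is legitimate because all the arguments lie in $H^s(K)$. No further subtlety arises, so the lemma reduces to a careful but direct computation.
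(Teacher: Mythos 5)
Your proposal is correct and follows essentially the same route as the paper, whose entire proof of \eqref{interpolation_est} is the one-line remark that it ``follows from Lemma~\ref{trace_inq}.b, \eqref{local_int_est}, and an interpolation of Sobolev spaces'': your term-by-term bookkeeping (trace inequality with $s=2+\alpha$, $1+\alpha$, $\alpha$ applied to $\varphi-\Pi_h\varphi$, $\nabla(\varphi-\Pi_h\varphi)$, $D^2(\varphi-\Pi_h\varphi)$ respectively, each balancing to $h_K^{2\alpha}$ after the weights $h_E^{-3}$, $h_E^{-1}$, $h_E$) is exactly the computation the authors leave implicit, with the Sobolev-space interpolation absorbed into the fractional-order trace and local estimates you invoke. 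The observations that $r\geq 2$ is needed and that $\|\cdot\|_{\dg}\leq\|\cdot\|_h$ is definitional are both accurate.
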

\begin{proof}
The proof of \eqref{interpolation_est} follows from Lemma~\ref{trace_inq}.b, \eqref{local_int_est}, and an interpolation of Sobolev spaces \cite[Subsection 14.1]{Brenner}.
\end{proof}
\noindent For an easy notation of vectors, we denote the componentwise interpolation of $\boldsymbol{\zeta}\in\boldsymbol{H}^s(\cT):=H^s(\cT)\times H^s(\cT)$ by $\Pi_h\boldsymbol{\zeta}$.

\begin{figure}
	\begin{center}
		\begin{tikzpicture}[scale=2]
		\draw[line width=0.25mm] (0,0)--(2,0)--(1,1.414)--(0,0);
		\foreach \Point in {(0,0), (2,0), (1,1.414), (1,0),(1.5,0.7),(0.5,.7)}{\node at \Point {$\bullet$};}
		\foreach \Point in {(3,0), (5,0), (4,1.414),  (4,0),(4.5,.7), (3.5,.7),(4,0.472)}{\node at \Point {$\bullet$};}
		\node at (1,0.5) {$K$};
		
		\draw[line width=0.25mm] (3,0)--(5,0)--(4,1.414)--(3,0);
		\draw(3,0) circle(1mm);\draw(5,0) circle(1mm);\draw(4,1.414) circle(1mm);\draw(4,0.477) circle(1mm);
		\draw[line width=0.25mm] (3,0)--(4,0.48)--(5,0); \draw[line width=0.25mm] (4,0.48)--(4,1.414);
		\draw[->,line width=0.25mm] (4.25,1.05)--(4.5,1.25);\draw[->,line width=0.25mm] (4.75,.35)--(5,.5);
		\draw[->,line width=0.25mm] (3.75,1.05)--(3.5,1.25);\draw[->,line width=0.25mm] (3.25,.35)--(3,.5);
		\draw[->,line width=0.25mm] (3.5,0)--(3.5,-.3);\draw[->,line width=0.25mm] (4.5,0)--(4.5,-.3);
		
		\node at (4,0.2) {$K_1$};
		\node at (4.3,.6) {$K_2$};
		\node at (3.7,0.57) {$K_3$};
		\end{tikzpicture}
		\caption{A $P_2$ Lagrange triangular element and a $\tilde{P}_4$- $C^1$- conforming macro element}
		\label{fig:macro_elem}
	\end{center}
\end{figure}
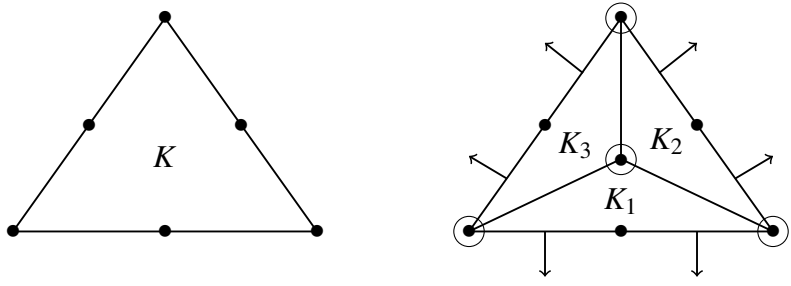

\begin{defn}
	\cite{Georgoulis2011} For $K\in\cT$, a macro-element of degree $4$ is a nodal finite element $(K,\tilde{P}_4,\tilde{N})$, consisting of sub-triangles $K_j$, $j=1,2,3$ (see Figure~\ref{fig:macro_elem}). The local element space $\tilde{P}_4$ is defined by 
	\begin{align*}
	\tilde{P}_4:=\left\{\varphi\in C^1(K): \varphi|_{K_j}\in P_4(K_j),\;j=1,2,3\right\}.
	\end{align*}	
	The degrees of freedom $\tilde{N}$ are defined as (a) the value and the first (partial) derivatives at the vertices of $K$; (b) the value at the midpoint of each edge of $K$; (c) the normal derivative at two distinct points in the interior of each edge of $K$; (d) the value and the first (partial) derivatives at the common vertex of  $K_1, K_2$ and $K_3$. The corresponding finite element space consisting of the above macro-elements will be denoted by $S_4(\cT)\subset \hto$.

\end{defn}

The enrichment operator of \cite{Georgoulis2011} is outlined in the sequel for a convenient reading. For each nodal point $p$ of the $C^1$-conforming finite element space $S_4(\cT)$, define $\cT(p)$ to be the set of $K\in\cT$ which shares the nodal point $p$ and let $|\cT(p)|$ denotes its cardinality. Define the operator $E_h:\St\to S_4(\cT)$ for any nodal variable $N_p$ at $p$ by
\begin{align*}
N_p(E_h(\varphi_{\dg})):=\begin{cases}
\frac{1}{|\cT(p)|}\sum_{K\in\cT(p)} N_p(\varphi_{\dg}|_K)\;\text{ if } p\in  \cN(\Omega),\\
0\; \text{ if } p\in \cN(\partial\Omega).
\end{cases}
\end{align*}

\begin{lem}[Enrichment operator]\cite{Georgoulis2011}\label{dG_Enrich_apost}\label{enrichment_dG} The enrichment operator $E_h: \St\to S_4(\cT)$ satisfies, for $m=0,1,2$,
\begin{align}\label{enrich_apost}
\sum_{K\in\cT}\left|\varphi_{\dg}-E_h\varphi_{\dg}\right|_{H^m(K)}^2 & \lesssim\|h_{\cE}^{1/2-m}[\varphi_{\dg}]_{\cE}\|_{L^2(\Gamma)}^2+\|h_{\cE}^{3/2-m}[\nabla \varphi_{\dg}]_{\cE}\|_{L^2(\Gamma)}^2 \lesssim h^{2-m}\|\varphi_{\dg}\|_{\dg}.
\end{align}
		Moreover, for some positive constant $\Lambda\approx 1$,
		\begin{align}\label{EhLambda}
		\|\varphi_{\dg}-E_h\varphi_{\dg}\|_{\dg}\leq \Lambda \inf_{\varphi\in X}\|\varphi_{\dg}-\varphi\|_{\dg}.
		\end{align}
	\end{lem}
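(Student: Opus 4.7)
The plan is to establish \eqref{enrich_apost} by a standard nodal/scaling argument on each element and then to deduce \eqref{EhLambda} via a triangle-inequality trick that exploits the fact that every $\varphi\in X$ has vanishing inter-element jumps of value and normal derivative. First I would fix $K\in\cT$ and write $w:=\varphi_{\dg}|_K-E_h\varphi_{\dg}|_K$; this is a piecewise polynomial of degree $\leq 4$ on the macro-subdivision of $K$. By construction of $E_h$, its degrees of freedom $N_p(w)$ vanish at every nodal point $p$ except possibly at shared interior nodes, where
\[
N_p(w)=\frac{1}{|\cT(p)|}\sum_{K'\in\cT(p)}\bigl(N_p(\varphi_{\dg}|_K)-N_p(\varphi_{\dg}|_{K'})\bigr)
\]
is a uniformly bounded linear combination of jumps of $\varphi_{\dg}$ or of $\nabla\varphi_{\dg}$ across the edges of the patch $\cT(p)$.

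Next, a scaling argument on the reference macro-element, using the equivalence of norms on the finite-dimensional space $\tilde P_4$, will yield
\[
|w|_{H^m(K)}^2\lesssim h_K^{2-2m}\sum_{p}\bigl(|N_p^{(\mathrm{val})}(w)|^2+h_K^{2}|N_p^{(\mathrm{der})}(w)|^2\bigr),
\]
with value- and derivative-type functionals scaled appropriately. Each point evaluation of $[\varphi_{\dg}]_E$ or $[\nabla\varphi_{\dg}]_E$ at a nodal point on an adjacent edge $E$ is controlled by $h_E^{-1/2}\|\cdot\|_{L^2(E)}$ via the inverse trace inequality of Lemma~\ref{inv_ineq_dg}. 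Summing over $K\in\cT$ and regrouping by edges (with uniformly bounded patch overlap) gives the first inequality in \eqref{enrich_apost}. Factoring $h_E^{1/2-m}=h_E^{2-m}h_E^{-3/2}$ and $h_E^{3/2-m}=h_E^{2-m}h_E^{-1/2}$, using $h_E\leq h$ and $\sigma_1,\sigma_2\geq 1$, then bounds the middle quantity by a multiple of $h^{4-2m}\|\varphi_{\dg}\|_{\dg}^2$ and completes \eqref{enrich_apost}.

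For \eqref{EhLambda} I would pick an arbitrary $\varphi\in X$ and exploit $[\varphi]_E=0=[\nabla\varphi\cdot\nu_E]_E$ together with $E_h\varphi_{\dg}\in S_4(\cT)\subset X$ to get
\[
\|\varphi_{\dg}-E_h\varphi_{\dg}\|_{\dg}^2=|\varphi_{\dg}-E_h\varphi_{\dg}|_{H^2(\cT)}^2+J_{\sigma_1,\sigma_2}(\varphi_{\dg}-\varphi,\varphi_{\dg}-\varphi).
\]
Applying \eqref{enrich_apost} with $m=2$ to the first term, after rewriting jumps of $\varphi_{\dg}$ as jumps of $\varphi_{\dg}-\varphi$, controls it by $\|\varphi_{\dg}-\varphi\|_{\dg}^2$; the second term is trivially below $\|\varphi_{\dg}-\varphi\|_{\dg}^2$. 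Taking the infimum over $\varphi\in X$ produces \eqref{EhLambda} with a $\Lambda$ independent of $\sigma_1,\sigma_2\geq 1$.

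The main obstacle is the combinatorial bookkeeping in the first step: every shared-node discrepancy has to be expressed as a uniformly bounded sum of jump contributions across edges in the patch $\cT(p)$, including at multi-triangle interior vertices and, via the boundary convention $[\varphi]_E=\varphi|_E$ for $E\in\cE(\partial\Omega)$, at boundary vertices where the homogeneous conditions in $\hto$ are encoded. Once this patch-counting argument is in place, the scaling, inverse, and trace estimates that finish the proof are routine.
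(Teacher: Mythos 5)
Your proposal is correct in outline, but it does more work than the paper at one point and takes the identical route at the other. For the first chain of inequalities \eqref{enrich_apost}, the paper simply cites \cite[Lemma 3.1]{Georgoulis2011} and proves nothing; you instead reconstruct that cited proof from scratch via the nodal-averaging/scaling argument (discrepancies $N_p(w)$ expressed as averaged jumps over the patch $\cT(p)$, norm equivalence on the reference macro-element, inverse trace estimates on edges). That reconstruction is sound and self-contained, and your scaling exponents $h_K^{2-2m}$ and the extra $h_K^2$ on derivative-type functionals are the right ones, but it is strictly more than the paper attempts. For \eqref{EhLambda} your argument coincides with the paper's: take $m=2$ in \eqref{enrich_apost}, use that $E_h\varphi_{\dg}\in S_4(\cT)\subset C^1$ so $[\varphi_{\dg}-E_h\varphi_{\dg}]_E=[\varphi_{\dg}]_E=[\varphi_{\dg}-\varphi]_E$ and likewise for gradient jumps, then bound everything by $\|\varphi_{\dg}-\varphi\|_{\dg}^2$ and take the infimum. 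One small point you (like the paper) leave implicit: the middle quantity in \eqref{enrich_apost} involves the jump of the \emph{full} gradient $[\nabla\varphi_{\dg}]_{\cE}$, whereas $\|\bullet\|_{\dg}$ penalizes only the normal component $[\nabla\varphi_{\dg}\cdot\nu_E]_E$; to close the bound by $\|\varphi_{\dg}\|_{\dg}$ (and, in \eqref{EhLambda}, by $\|\varphi_{\dg}-\varphi\|_{\dg}$) the tangential component must be absorbed via the identity $[\nabla\varphi_{\dg}]_E\cdot\tau_E=\partial_{\tau}[\varphi_{\dg}]_E$ and a one-dimensional inverse estimate $\|h_E^{-1/2}\partial_\tau[\varphi_{\dg}]_E\|_{L^2(E)}\lesssim\|h_E^{-3/2}[\varphi_{\dg}]_E\|_{L^2(E)}$, which is why the function-jump term appears with the weight $h_E^{-3}$ in the norm. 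With that line added your argument is complete and yields, as you claim, a $\Lambda$ independent of $h$ and of $\sigma_1,\sigma_2\geq 1$.
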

	\begin{proof}
		See \cite[Lemma 3.1]{Georgoulis2011} for a proof of \eqref{enrich_apost}. For the proof of \eqref{EhLambda}, choose $m\leq 2$ in \eqref{enrich_apost} and obtain (with $h_{\cE}\lesssim h\lesssim 1$) that
		\begin{align*}
		\|\varphi_{\dg}-E_h\varphi_{\dg}\|_{ H^2(\cT)}^2\lesssim\|h_{\cE}^{-3/2}[\varphi_{\dg}]_{\cE}\|_{L^2(\Gamma)}^2+\|h_{\cE}^{-1/2}[\nabla \varphi_{\dg}]_{\cE}\|_{L^2(\Gamma)}^2.
		\end{align*}
		Since $\left[\varphi_{\dg}-E_h\varphi_{\dg}\right]_E=[\varphi_{\dg}]_E$ and $\left[\nabla(\varphi_{\dg}-E_h\varphi_{\dg})\right]_E=\left[\nabla \varphi_{\dg}\right]_E$,  those edge terms in both sides of the above inequality lead (in the definition of $\|\cdot\|_{\dg}$) to
		\begin{align*}
		\|\varphi_{\dg}-E_h\varphi_{\dg}\|_{\dg}^2\lesssim \|h_{\cE}^{-3/2}[\varphi_{\dg}]_{\cE}\|_{L^2(\Gamma)}^2+\|h_{\cE}^{-1/2}[\nabla \varphi_{\dg}]_{\cE}\|_{L^2(\Gamma)}^2.
		\end{align*}
		Furthermore, any $\varphi\in X$ satisfies (with \eqref{EhLambda} for $m=2$ in the end) that
		\begin{align*}
		\|\varphi_{\dg}-E_h\varphi_{\dg}\|_{\dg}^2\lesssim \|h_{\cE}^{-3/2}[\varphi_{\dg}-\varphi]_{\cE}\|_{L^2(\Gamma)}^2+\|h_{\cE}^{-1/2}[\nabla (\varphi_{\dg}-\varphi)]_{\cE}\|_{L^2(\Gamma)}^2\lesssim \|\varphi_{\dg}-\varphi\|_{\dg}.
		\end{align*}
		This completes the proof of \eqref{EhLambda} for some $h$-independent positive constant $\Lambda$.
	\end{proof}
	

\begin{lem}[Inverse inequalities II]\label{auxest}
		It holds
		\begin{align*}
		\|h_{\cT}\nabla\varphi\|_{L^{\infty}(\Omega)}&\lesssim\|\varphi\|_{L^{\infty}(\Omega)} \fl\varphi\in \St+S_4(\cT),\\
		\|\varphi\|_{W^{1,4}(\cT)}+\|\varphi\|_{L^{\infty}(\Omega)}&\lesssim\|\varphi\|_{\dg}\fl\varphi\in \St+X.
		\end{align*}
	\end{lem}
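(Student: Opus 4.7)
The first inequality is a standard polynomial inverse inequality applied piecewise. On each macro-sub-triangle $K_j\subset K$, the restriction of $\varphi\in\St+S_4(\cT)$ is a polynomial of degree at most four, so a scaling argument from the reference element (or the first estimate of Lemma \ref{inv_ineq_ncfem} applied to each component of $\nabla\varphi$ with $p=\infty$) yields $\|\nabla\varphi\|_{L^{\infty}(K_j)}\lesssim h_{K_j}^{-1}\|\varphi\|_{L^{\infty}(K_j)}$. Shape regularity $h_{K_j}\approx h_K$ and taking the supremum over all $K\in\cT$ completes the first claim.

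For the second inequality I decompose $\varphi=\varphi_{\dg}+w$ with $\varphi_{\dg}\in\St$ and $w\in X=\hto$. The smooth summand is controlled by the two-dimensional Sobolev embedding $H^2_0(\Omega)\hookrightarrow W^{1,4}(\Omega)\cap L^{\infty}(\Omega)$, so that $\|w\|_{W^{1,4}(\Omega)}+\|w\|_{L^{\infty}(\Omega)}\lesssim |w|_{2}=\|w\|_{\dg}$. The discontinuous summand is split via the enrichment operator as $\varphi_{\dg}=E_h\varphi_{\dg}+(\varphi_{\dg}-E_h\varphi_{\dg})$. Since $E_h\varphi_{\dg}\in S_4(\cT)\subset H^2_0(\Omega)$, the same Sobolev embedding applies to it, and the triangle inequality combined with \eqref{EhLambda} gives $|E_h\varphi_{\dg}|_{2}=\|E_h\varphi_{\dg}\|_{\dg}\leq(1+\Lambda)\|\varphi_{\dg}\|_{\dg}$, whence $\|E_h\varphi_{\dg}\|_{W^{1,4}(\Omega)}+\|E_h\varphi_{\dg}\|_{L^{\infty}(\Omega)}\lesssim \|\varphi_{\dg}\|_{\dg}$.

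It remains to estimate the residual $v:=\varphi_{\dg}-E_h\varphi_{\dg}$, which is piecewise polynomial of bounded degree. For the $L^{\infty}$-contribution, the inverse inequality from Lemma \ref{inv_ineq_ncfem} gives $\|v\|_{L^{\infty}(K)}\lesssim h_K^{-1}\|v\|_{L^2(K)}$, and the $m=0$ case of Lemma \ref{enrichment_dG} (in the form $\|v\|_{L^2(\Omega)}\lesssim h^{2}\|\varphi_{\dg}\|_{\dg}$, read off from its middle expression) yields $\|v\|_{L^{\infty}(\Omega)}\lesssim h\|\varphi_{\dg}\|_{\dg}\lesssim \|\varphi_{\dg}\|_{\dg}$. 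For the $W^{1,4}$-contribution, Lemma \ref{inv_ineq_ncfem} with $p=4$ produces $\|\nabla v\|_{L^{4}(K)}^{4}\lesssim h_K^{-2}|v|_{H^1(K)}^{4}$; summing over $K$ and invoking the elementary bound
\[
\sum_{K\in\cT}h_K^{-2}|v|_{H^1(K)}^{4}\leq \Bigl(\max_{K\in\cT}h_K^{-2}|v|_{H^1(K)}^{2}\Bigr)\Bigl(\sum_{K\in\cT}|v|_{H^1(K)}^{2}\Bigr),
\]
where the first factor is controlled by the \emph{local} version of Lemma \ref{enrichment_dG} with $m=1$ (giving $\lesssim\|\varphi_{\dg}\|_{\dg}^{2}$) and the second by its global version (giving $\lesssim h^{2}\|\varphi_{\dg}\|_{\dg}^{2}$), yields $\|\nabla v\|_{L^{4}(\cT)}\lesssim\|\varphi_{\dg}\|_{\dg}$.

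The principal obstacle is this $W^{1,4}$-estimate: the $p=4$ inverse inequality contributes a factor $h_K^{-1/2}$ per element which must be compensated by the enrichment error at the element level and not merely globally. Fortunately, since $E_h$ is defined through a patch-local nodal averaging, the enrichment bound in Lemma \ref{enrichment_dG} admits a per-element refinement, and the product of this local bound with the global enrichment estimate produces a factor of $h^{2}$ that exactly cancels the divergent scaling.
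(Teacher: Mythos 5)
Your first inequality and the overall toolkit for the second one (enrichment operator, polynomial inverse estimates, the embedding $H^2_0(\Omega)\hookrightarrow W^{1,4}(\Omega)\cap L^{\infty}(\Omega)$) are exactly what the paper gestures at -- it gives no proof of its own beyond citing \cite[Lemma 3.7]{BS_C0IP_VKE} ``on the enrichment and interpolation operator'' -- so the route is the right one. There are, however, two genuine gaps in your treatment of the second inequality.

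First, your final bound reads $\|\varphi\|_{W^{1,4}(\cT)}+\|\varphi\|_{L^{\infty}(\Omega)}\lesssim\|w\|_{\dg}+\|\varphi_{\dg}\|_{\dg}$ for the chosen splitting $\varphi=\varphi_{\dg}+w$, whereas the lemma asserts control by $\|\varphi\|_{\dg}$, the norm of the function itself. On the sum space $\St+X$ these are not comparable: take a fixed $w\in X\cap H^{3}(\Omega)$ and $\varphi_{\dg}=-\Pi_h w$, so that $\|\varphi\|_{\dg}=\|w-\Pi_h w\|_{\dg}=O(h)$ while $\|w\|_{\dg}+\|\Pi_h w\|_{\dg}$ stays of order one; your argument then proves only a strictly weaker statement. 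The repair is to regroup as $\varphi=(w+E_h\varphi_{\dg})+(\varphi_{\dg}-E_h\varphi_{\dg})$ and to observe that $w\in H^2_0(\Omega)$ contributes no jumps, so $[\varphi]_E=[\varphi_{\dg}]_E$ and $[\nabla\varphi]_E=[\nabla\varphi_{\dg}]_E$; hence the middle expression of \eqref{enrich_apost}, and with it $\|\varphi_{\dg}-E_h\varphi_{\dg}\|_{\dg}$ and $|w+E_h\varphi_{\dg}|_{H^2(\Omega)}\le|\varphi|_{H^2(\cT)}+|\varphi_{\dg}-E_h\varphi_{\dg}|_{H^2(\cT)}$, is bounded by $\|\varphi\|_{\dg}$ alone (the tangential part of $[\nabla\varphi_{\dg}]_E$ being absorbed via a one-dimensional inverse estimate on $E$ into the $h_E^{-3}$-weighted jump of $\varphi$). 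Second, your $L^{\infty}$ estimate for $v=\varphi_{\dg}-E_h\varphi_{\dg}$ pairs the local inverse factor $h_K^{-1}$ with the \emph{global} bound $\|v\|_{L^2(\Omega)}\lesssim h^{2}\|\varphi_{\dg}\|_{\dg}$, producing $h_K^{-1}h^{2}$; this is harmless only on quasi-uniform meshes, while the paper works with shape-regular, adaptively refined triangulations. Here, as in your $W^{1,4}$ step, you must use the patch-local form of \eqref{enrich_apost} (which you invoke but do not justify) to obtain $\|v\|_{L^{\infty}(K)}\lesssim h_K^{-1}\|v\|_{L^2(K)}\lesssim h_K\|\varphi_{\dg}\|_{\dg}$. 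With these two repairs the argument closes.
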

\begin{proof}
This follows with the arguments of \cite[Lemma 3.7]{BS_C0IP_VKE} on the enrichment and interpolation operator. Further details are omitted for brevity.
\end{proof}

\subsection{Continuity and ellipticity}
%
This subsection is devoted to the boundedness and ellipticity results for the bilinear form $a_{\dg}(\bullet, \bullet)$ and boundedness results for $b_{\dg}(\bullet, \bullet,\bullet)$.


\begin{lem}[Boundedness of $a_{\dg}(\bullet,\bullet)$] Any $\theta_{\dg},\varphi_{\dg}\in \St+S_4(\cT)$ satisfies
	\begin{equation*}
	a_{\dg}(\theta_{\dg},\varphi_{\dg})\lesssim \|\theta_{\dg}\|_{\dg}\|\varphi_{\dg}\|_{\dg}.
	\end{equation*}
	
\end{lem}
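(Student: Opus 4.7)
The plan is to split the bilinear form $a_{\dg}(\theta_{\dg},\varphi_{\dg})$ into its three contributions---the volume integral $\sum_{K}\int_K D^2\theta_{\dg}:D^2\varphi_{\dg}\dx$, the symmetric consistency pair $J(\theta_{\dg},\varphi_{\dg})+J(\varphi_{\dg},\theta_{\dg})$, and the stabilisation $J_{\sigma_1,\sigma_2}(\theta_{\dg},\varphi_{\dg})$---and to bound each by $\|\theta_{\dg}\|_{\dg}\|\varphi_{\dg}\|_{\dg}$ via Cauchy--Schwarz plus standard discrete trace and inverse inequalities.

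For the volume and stabilisation terms, Cauchy--Schwarz applied elementwise or edgewise yields the bounds $|\theta_{\dg}|_{H^2(\cT)}|\varphi_{\dg}|_{H^2(\cT)}$ and $J_{\sigma_1,\sigma_2}(\theta_{\dg},\theta_{\dg})^{1/2}J_{\sigma_1,\sigma_2}(\varphi_{\dg},\varphi_{\dg})^{1/2}$, each manifestly controlled by $\|\theta_{\dg}\|_{\dg}\|\varphi_{\dg}\|_{\dg}$. The non-trivial part is the consistency term $J(\theta_{\dg},\varphi_{\dg})=\sum_E\int_E[\nabla\varphi_{\dg}]_E\cdot\langle D^2\theta_{\dg}\,\nu_E\rangle_E\ds$. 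After edgewise Cauchy--Schwarz, I would estimate the average by the discrete trace inequality of Lemma~\ref{inv_ineq_dg} (applied on each sub-triangle of the macro element, using shape regularity \eqref{shape_reg_const}),
\[
\|\langle D^2\theta_{\dg}\,\nu_E\rangle_E\|_{L^2(E)}^2 \lesssim h_E^{-1}\bigl(|\theta_{\dg}|_{H^2(K_+)}^2+|\theta_{\dg}|_{H^2(K_-)}^2\bigr),
\]
which is legitimate since $\theta_{\dg}$ is piecewise polynomial on $\St+S_4(\cT)$.

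The main obstacle is that $\|\cdot\|_{\dg}$ only controls the normal jump $[\nabla\varphi_{\dg}\cdot\nu_E]_E$, whereas $[\nabla\varphi_{\dg}]_E$ is the full vector-valued gradient jump. To handle this I would decompose $[\nabla\varphi_{\dg}]_E=[\nabla\varphi_{\dg}\cdot\nu_E]_E\,\nu_E+[\nabla\varphi_{\dg}\cdot t_E]_E\,t_E$ with $t_E$ the unit tangent and use the edgewise identity $[\nabla\varphi_{\dg}\cdot t_E]_E=\partial_t[\varphi_{\dg}]_E$. The one-dimensional inverse estimate on the polynomial $[\varphi_{\dg}]_E$ then gives $\|[\nabla\varphi_{\dg}\cdot t_E]_E\|_{L^2(E)}\lesssim h_E^{-1}\|[\varphi_{\dg}]_E\|_{L^2(E)}$, so that the tangential component is absorbed into the $h_E^{-3}$ jump term of $\|\varphi_{\dg}\|_{\dg}$.

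Combining these estimates, a discrete Cauchy--Schwarz over edges gives
\[
|J(\theta_{\dg},\varphi_{\dg})|\lesssim \Bigl(\sum_E \tfrac{1}{h_E}\|[\nabla\varphi_{\dg}\cdot\nu_E]_E\|_{L^2(E)}^2+\tfrac{1}{h_E^3}\|[\varphi_{\dg}]_E\|_{L^2(E)}^2\Bigr)^{1/2}|\theta_{\dg}|_{H^2(\cT)},
\]
where the first factor is $\lesssim\|\varphi_{\dg}\|_{\dg}$ since $\sigma_1,\sigma_2\geq 1$ (ensuring the hidden constant is independent of the stabilisation parameters, as required by the convention for $\lesssim$). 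The symmetric term $J(\varphi_{\dg},\theta_{\dg})$ is treated identically with the roles of $\theta_{\dg}$ and $\varphi_{\dg}$ swapped, and adding the three contributions completes the estimate.
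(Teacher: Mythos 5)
Your proof is correct and follows essentially the same route as the paper: Cauchy--Schwarz for the volume and penalty terms, and for $J(\bullet,\bullet)$ an edgewise Cauchy--Schwarz combined with the discrete trace inequality for $\langle D^2\theta_{\dg}\,\nu_E\rangle_E$ and absorption of the gradient-jump factor into $\|\varphi_{\dg}\|_{\dg}$ using $\sigma_1,\sigma_2\geq 1$. The only difference is that you make explicit the normal/tangential splitting of $[\nabla\varphi_{\dg}]_E$ and the one-dimensional inverse estimate $\|\partial_t[\varphi_{\dg}]_E\|_{L^2(E)}\lesssim h_E^{-1}\|[\varphi_{\dg}]_E\|_{L^2(E)}$, a step the paper passes over silently in going from \eqref{Norm_bdd} to \eqref{J_bdd}; this is a welcome clarification rather than a deviation.
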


\begin{proof}
	Given any $\theta_{\dg},\varphi_{\dg} \in \St$, recall the definition of $a_{\dg}(\bullet,\bullet)$
	\begin{align*}
	a_{\dg}(\theta_{\dg},\varphi_{\dg})& =\sik D^2\theta_{\dg}: D^2\varphi_{\dg}\dx-\left(J(\theta_{\dg},\varphi_{\dg})+ J(\varphi_{\dg},\theta_{\dg})\right) \nonumber \\
	& \qquad +J_{\sigma_1,\sigma_2}(\theta_{\dg},\varphi_{\dg}).
	\end{align*}
	The definition of $J(\bullet, \bullet)$, the Cauchy-Schwarz inequality, and Lemma~\ref{trace_inq} imply
	\begin{align} 
	&J(\theta_{\dg},\varphi_{\dg})=\se [\nabla\varphi_{\dg}]_E\cdot\langle D^2\theta_{\dg}\nu_E\rangle_E \ds\notag \\
	&\leq\sigma_2^{-1/2}\Big{(}\sum_{E\in\cE}\frac{\sigma_2}{h_E}\|[\nabla\varphi_{\dg}]_E\|_{L^2(E)}^2\Big{)}^{1/2}\Big{(}\sum_{E\in\cE}\|h_E^{1/2}\langle D^2\theta_{\dg}\rangle_E  \|_{L^2(E)}^2\Big{)}^{1/2}\label{Norm_bdd}\\
	&\lesssim \sigma_2^{-1/2} \|\varphi_{\dg}\|_{\dg}|\theta_{\dg}|_{H^2(\cT)}\leq \sigma_2^{-1/2} \|\theta_{\dg}\|_{\dg}\|\varphi_{\dg}\|_{\dg}.\label{J_bdd}
	\end{align}
	\noindent The same arguments show, $J(\varphi_{\dg},\theta_{\dg})\lesssim \sigma_2^{-1/2}\|\theta_{\dg}\|_{\dg}\|\varphi_{\dg}\|_{\dg}$. The definitions of $J_{\sigma_1,\sigma_2}(\bullet,\bullet)$ and $\|\bullet\|_{\dg}$, and the Cauchy-Schwarz inequality lead to 
	\begin{align*}
	&J_{\sigma_1,\sigma_2}(\theta_{\dg},\varphi_{\dg})=\sum_{E\in\cE}\frac{\sigma_1}{h_E^3}\int_E[\theta_{\dg}]_E[\varphi_{\dg}]_E\ds+\sum_{E\in\cE}\frac{\sigma_2}{h_E}\int_E[\nabla \theta_{\dg}\cdot\nu_E]_E[\nabla \varphi_{\dg}\cdot\nu_E]_E\ds\\
	&\leq \left(\sum_{E\in\cE}\frac{\sigma_1}{h_E^3}\|[\theta_{\dg}]_E\|_{L^2(E)}^2\right)^{1/2}\left(\sum_{E\in\cE}\frac{\sigma_1}{h_E^3}\|[\theta_{\dg}]_E\|_{L^2(E)}^2\right)^{1/2}\\
	&\quad+\left(\sum_{E\in\cE}\frac{\sigma_2}{h_E}\|[\nabla \theta_{\dg}\cdot\nu_E]_E\|_{L^2(E)}^2\right)^{1/2}\left(\sum_{E\in\cE}\frac{\sigma_2}{h_E}\|[\nabla \varphi_{\dg}\cdot\nu_E]_E\|_{L^2(E)}^2\right)^{1/2}\lesssim \|\theta\|_{\dg}\|\varphi\|_{\dg}.
	\end{align*}
	The combination of all displayed formulas and $\sigma_2\geq 1$ conclude the proof.
\end{proof}

\begin{rem}\label{a_bdd_h}
	The definitions of $a_{\dg}(\bullet,\bullet)$, the auxiliary norm $\|\bullet\|_h$, and the estimate \eqref{Norm_bdd} imply (since $\sigma_2\geq 1$)
	\begin{equation*}
	a_{\dg}(\theta,\varphi)\lesssim\|\theta\|_{h}\|\varphi\|_{h}\fl\theta,\varphi\in Y_h\equiv (X\cap H^{2+\alpha}(\Omega))+\St.
	\end{equation*}
\end{rem}

\begin{rem}	
	The trace inequality Lemma~\ref{trace_inq}.a implies that $\displaystyle\|\bullet\|_{\dg}\approx\|\bullet\|_{h}$ are equivalent norms on $\St+S_4(\cT)$ with equivalence constants, which do neither depend on the mesh-size nor on $\sigma
	_1,\sigma_2>0$.
\end{rem}
\begin{lem}[Ellipticity of $a_{\dg}(\bullet,\bullet)$]\label{ellipticity} 
	For any $\sigma_1>0$ and for a sufficiently large parameter $\sigma_2$, there exists some $h$-independent positive constant $\beta_0$ (which depends on $\sigma_2$) such that
	\begin{equation*}
	\beta_0 \|\theta_{\dg}\|_{\dg}^2 \leq a_{\dg}(\theta_{\dg},\theta_{\dg}) \fl\theta_{\dg}\in\St.
	\end{equation*}
\end{lem}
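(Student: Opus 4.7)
The plan is to exploit that $J_{\sigma_1,\sigma_2}(\theta_{\dg},\theta_{\dg})=\|\theta_{\dg}\|_{\dg}^2-|\theta_{\dg}|_{H^2(\cT)}^2$, so that
$$a_{\dg}(\theta_{\dg},\theta_{\dg})=\|\theta_{\dg}\|_{\dg}^2-2J(\theta_{\dg},\theta_{\dg}),$$
and then to bound the sole obstructive term $2|J(\theta_{\dg},\theta_{\dg})|$ by $(1-\beta_0)\|\theta_{\dg}\|_{\dg}^2$ once $\sigma_2$ is taken sufficiently large. Cauchy--Schwarz edgewise combined with a weighted Young inequality (free parameter $\epsilon>0$) produces
$$2|J(\theta_{\dg},\theta_{\dg})|\leq \epsilon\sum_{E\in\cE}h_E\|\langle D^2\theta_{\dg}\,\nu_E\rangle_E\|_{L^2(E)}^2+\epsilon^{-1}\sum_{E\in\cE}h_E^{-1}\|[\nabla\theta_{\dg}]_E\|_{L^2(E)}^2.$$

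The first sum is bounded by $C_{\text{tr}}|\theta_{\dg}|_{H^2(\cT)}^2$ via the trace inverse inequality of Lemma~\ref{trace_inq}.a applied componentwise to the piecewise polynomial $D^2\theta_{\dg}$. For the second sum, decompose $[\nabla\theta_{\dg}]_E=[\nabla\theta_{\dg}\cdot\nu_E]_E\,\nu_E+[\nabla\theta_{\dg}\cdot\tau_E]_E\,\tau_E$. The normal piece is exactly the object penalized by the $\sigma_2$-term and contributes at most $\sigma_2^{-1}J_{\sigma_1,\sigma_2}(\theta_{\dg},\theta_{\dg})$. The tangential piece is handled by observing that $[\theta_{\dg}]_E$ is a polynomial of degree at most $2$ on $E$ whose tangential derivative on $E$ equals $[\nabla\theta_{\dg}\cdot\tau_E]_E$; a one-dimensional inverse estimate on $E$ therefore gives $\|[\nabla\theta_{\dg}\cdot\tau_E]_E\|_{L^2(E)}\lesssim h_E^{-1}\|[\theta_{\dg}]_E\|_{L^2(E)}$, so that the tangential contribution is absorbed by the $\sigma_1$-term. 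With $\sigma_1\geq 1$ this yields
$$2|J(\theta_{\dg},\theta_{\dg})|\leq \epsilon C_{\text{tr}}|\theta_{\dg}|_{H^2(\cT)}^2+C\epsilon^{-1}(\sigma_1^{-1}+\sigma_2^{-1})J_{\sigma_1,\sigma_2}(\theta_{\dg},\theta_{\dg}).$$

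Substituting back into the identity for $a_{\dg}$ produces
$$a_{\dg}(\theta_{\dg},\theta_{\dg})\geq(1-\epsilon C_{\text{tr}})|\theta_{\dg}|_{H^2(\cT)}^2+\bigl(1-C\epsilon^{-1}(\sigma_1^{-1}+\sigma_2^{-1})\bigr)J_{\sigma_1,\sigma_2}(\theta_{\dg},\theta_{\dg}).$$
I would fix $\epsilon:=1/(2C_{\text{tr}})$, forcing the first coefficient to equal $1/2$, and then choose $\sigma_2$ large enough (depending on $C_{\text{tr}}$ and the universal constant $C$, but not on $h$) to make the second coefficient exceed $1/2$ as well. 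Setting $\beta_0:=1/2$ then gives $a_{\dg}(\theta_{\dg},\theta_{\dg})\geq\beta_0(|\theta_{\dg}|_{H^2(\cT)}^2+J_{\sigma_1,\sigma_2}(\theta_{\dg},\theta_{\dg}))=\beta_0\|\theta_{\dg}\|_{\dg}^2$. The main obstacle is the tangential component of $[\nabla\theta_{\dg}]_E$, which is not penalized by $J_{\sigma_1,\sigma_2}$; the one-dimensional inverse estimate relating that tangential jump to the function-value jump $[\theta_{\dg}]_E$ is the crucial non-routine ingredient, after which only a concrete balance of trace and inverse constants remains to determine the threshold for $\sigma_2$.
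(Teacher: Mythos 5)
Your proposal is correct and follows essentially the same route as the paper: the identity $a_{\dg}(\theta_{\dg},\theta_{\dg})=\|\theta_{\dg}\|_{\dg}^2-2J(\theta_{\dg},\theta_{\dg})$ followed by absorption of the consistency term for large $\sigma_2$. The one place where you do more than the paper is the normal/tangential splitting of $[\nabla\theta_{\dg}]_E$ together with the one-dimensional inverse estimate $\|[\nabla\theta_{\dg}\cdot\tau_E]_E\|_{L^2(E)}\lesssim h_E^{-1}\|[\theta_{\dg}]_E\|_{L^2(E)}$: the paper compresses exactly this step into the unexplained bound $\bigl(\sum_{E\in\cE}\tfrac{\sigma_2}{h_E}\|[\nabla\varphi_{\dg}]_E\|_{L^2(E)}^2\bigr)^{1/2}\lesssim\|\varphi_{\dg}\|_{\dg}$ in \eqref{Norm_bdd}--\eqref{J_bdd}, even though $\|\cdot\|_{\dg}$ only penalizes the normal component of the gradient jump, so your version is the honest one. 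Your Young-inequality balancing with $\epsilon=1/(2C_{\mathrm{tr}})$ yields a fixed $\beta_0=1/2$, whereas the paper's direct Cauchy--Schwarz bound $J(\theta_{\dg},\theta_{\dg})\leq C_0\sigma_2^{-1/2}\|\theta_{\dg}\|_{\dg}^2$ gives any $\beta_0<1$ for $\sigma_2$ large; these are interchangeable. One caveat applies to both arguments: the tangential contribution can only be absorbed by the $\sigma_1$-penalty, which costs a factor $\sigma_1^{-1}$, so the coefficient $1-C\epsilon^{-1}(\sigma_1^{-1}+\sigma_2^{-1})$ cannot be made positive by enlarging $\sigma_2$ alone unless $\sigma_1$ already exceeds a threshold determined by $C_{\mathrm{tr}}$ and the inverse-inequality constant; your ``with $\sigma_1\geq1$'' does not by itself guarantee this, and the lemma's ``for any $\sigma_1>0$'' is not actually delivered by either proof (the paper hides the same restriction in its convention that the constants in $\lesssim$ are uniform only for $\sigma_1,\sigma_2\geq1$). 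This is an inherited imprecision of the statement rather than a defect specific to your argument.
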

\begin{proof}
	For  $\theta_{\dg} \in\St$, the definition of $a_{\dg}(\bullet,\bullet)$ leads to
	\begin{align*}
	a_{\dg}(\theta_{\dg},\theta_{\dg})=\|\theta_{\dg}\|_{\dg}^2-2J(\theta_{\dg},\theta_{\dg}).
	\end{align*}
Recall \eqref{J_bdd} in the form 
$\displaystyle J(\theta_{\dg},\theta_{\dg})\leq C_0 \sigma_2^{-1/2}\|\theta_{\dg}\|_{\dg}^2$
with some constant $C_0\approx 1$. For any $0<\beta_0<1$ and any choice of $\sigma_2\geq 4C_0(1-\beta_0)^{-2}$, the combination of the previous estimates concludes the proof.
\end{proof}


%
%
Recall that $h$ denotes the maximal mesh-size of the underlying triangulation $\cT$.
\begin{lem}\label{intermediate}
	Any $\xi\in H^{2+\alpha}(\Omega)$ with $1/2<\alpha \leq 1$ and $\varphi_{\dg}\in\St$ satisfy
	\begin{equation}\label{Inpol_BTW}
	a_{\dg}(\xi,\varphi_{\dg}-E_h\varphi_{\dg})\lesssim h^\alpha \|\xi\|_{2+\alpha}\|\varphi_{\dg}\|_{\dg}.
	\end{equation}
	Consequently, for $\boldsymbol{\xi}\in  \bH^{2+\alpha}(\Omega)$ and $\Phi_{\dg} \in\Stb$,
	\begin{equation}\label{Inpol_BTW_A}
	A_{\dg}(\boldsymbol{\xi},\Phi_{\dg}-E_h\Phi_{\dg})\lesssim h^\alpha\trinl\boldsymbol{\xi}\trinr_{2+\alpha}\trinl\Phi_{\dg}\trinr_{\dg}.
	\end{equation}
\end{lem}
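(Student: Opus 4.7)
Let $w := \varphi_{\dg} - E_h \varphi_{\dg}$. The strategy is to replace $\xi$ by a $C^1$-conforming interpolant $I_h\xi \in S_4(\cT) \subset H^2_0(\Omega)$ (for instance $I_h\xi := E_h \Pi_h \xi$) to eliminate all stabilization-related edge terms, and then to transfer one derivative off $w$ by elementwise integration by parts. Split
\begin{equation*}
  a_{\dg}(\xi, w) = a_{\dg}(\xi - I_h \xi, w) + a_{\dg}(I_h \xi, w).
\end{equation*}
The first term is controlled via the boundedness of $a_{\dg}$ on $Y_h\times Y_h$ (Remark~\ref{a_bdd_h}). A triangle inequality together with \eqref{interpolation_est} and \eqref{EhLambda} yields $\|\xi - I_h\xi\|_h \leq \|\xi - \Pi_h\xi\|_h + \|\Pi_h\xi - E_h\Pi_h\xi\|_h \lesssim h^\alpha \|\xi\|_{2+\alpha}$, while \eqref{EhLambda} applied with $\varphi = 0$ (plus the norm equivalence $\|\cdot\|_h\approx\|\cdot\|_\dg$ on $\St + S_4(\cT)$) yields $\|w\|_h \lesssim \|\varphi_{\dg}\|_\dg$.

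For the second term, the $C^1$-conformity of $I_h\xi$ makes every jump $[I_h\xi]_E$, $[\nabla I_h\xi]_E$ and $[\nabla I_h\xi\cdot\nu_E]_E$ vanish, reducing $a_{\dg}(I_h\xi, w)$ to $\sum_K\int_K D^2 I_h\xi : D^2 w - J(I_h\xi, w)$. A single elementwise integration by parts rewrites the volume integral; the resulting boundary sum splits via $[ab]_E = \langle a\rangle[b]_E + [a]_E\langle b\rangle_E$, and the average-jump portion cancels $J(I_h\xi,w)$ exactly, leaving
\begin{equation*}
  a_{\dg}(I_h \xi, w) = -\sum_{K\in\cT}\int_K \nabla\Delta I_h\xi \cdot \nabla w\,dx + \sum_{E\in\cE}\int_E [D^2 I_h\xi\,\nu_E]_E\cdot\langle\nabla w\rangle_E\,ds.
\end{equation*}
The volume term is handled with the inverse estimate $\|\nabla\Delta I_h\xi\|_{L^2(K)} \lesssim h_K^{-1}\|I_h\xi - P\|_{H^2(K)}$, valid for any $P\in P_2(K)$ because $\nabla\Delta P = 0$; choosing $P$ as a near-best $H^2(K)$-approximation of $\xi$ and invoking Bramble-Hilbert together with the approximation property of $I_h\xi$ yields $\|\nabla\Delta I_h\xi\|_{L^2(K)}\lesssim h_K^{\alpha-1}\|\xi\|_{H^{2+\alpha}(K)}$, which pairs with the enrichment bound $\|\nabla w\|_{L^2(K)} \lesssim h_K\|\varphi_{\dg}\|_{\dg}$ (locally, via Lemma~\ref{enrichment_dG} with $m=1$). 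The edge term uses $[D^2\xi\,\nu_E]_E = 0$ (single-valued since $\alpha>1/2$) to rewrite $[D^2 I_h\xi\,\nu_E]_E = [D^2(I_h\xi-\xi)\,\nu_E]_E$, then applies Lemma~\ref{trace_inq}(b) plus the approximation bound and an inverse trace estimate for $\langle\nabla w\rangle_E$. The vector version \eqref{Inpol_BTW_A} follows by applying \eqref{Inpol_BTW} componentwise.

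The main obstacle is the limited regularity when $\alpha<1$: $\nabla\Delta\xi$ is not in $L^2(\Omega)$ in general, so one cannot integrate by parts directly on $\xi$. The conforming interpolant $I_h\xi$ circumvents this, since $\nabla\Delta I_h\xi$ is piecewise polynomial and well defined; the inverse estimate turns the missing regularity into an $h_K^{\alpha-1}$ factor that is exactly compensated by the $h_K$ gained on $\|\nabla w\|_{L^2(K)}$ from the enrichment estimate. The $C^1$-conformity of $I_h\xi$ is also essential for keeping the constants independent of $\sigma_1$ and $\sigma_2$, since it annihilates every stabilization contribution $J_{\sigma_1,\sigma_2}(I_h\xi, w)$ and every term $J(w, I_h\xi)$.
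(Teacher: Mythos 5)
Your overall route is genuinely different from the paper's and, in outline, viable: the paper proves the $H^2$ bound $a_{\dg}(\Pi_h\xi,w)\lesssim\|\xi\|_2\|\varphi_{\dg}\|_{\dg}$ and the $H^3$ bound $a_{\dg}(\Pi_h\xi,w)\lesssim h\|\xi\|_3\|\varphi_{\dg}\|_{\dg}$ (the latter by integrating by parts on $\xi\in H^3$, where $\divc(D^2\xi)\in L^2$ and all jump terms of $\xi$ vanish) and then invokes real interpolation $H^{2+\alpha}(\Omega)=[H^2(\Omega),H^3(\Omega)]_{\alpha,2}$, finishing with a perturbation from $\Pi_h\xi$ back to $\xi$. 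You instead work at fractional order directly, trading the interpolation-space argument for an inverse estimate plus Bramble--Hilbert on $\nabla\Delta I_h\xi$; this is more self-contained (no operator interpolation) but forces you to manage the macro-element structure of $S_4(\cT)$ by hand, which is where the trouble lies.

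The concrete gap: your integration-by-parts identity for $a_{\dg}(I_h\xi,w)$ is false as written when $I_h\xi=E_h\Pi_h\xi\in S_4(\cT)$, because $I_h\xi$ is only piecewise $P_4$ on the three sub-triangles $K_1,K_2,K_3$ of each macro-element $K$ and only $C^1$ across the internal sub-edges. Integrating by parts sub-triangle by sub-triangle therefore produces, in addition to the terms you list, the sum
\begin{equation*}
\sum_{K\in\cT}\ \sum_{e\subset K\ \mathrm{internal}}\int_e \big[D^2 I_h\xi\,\nu_e\big]_e\cdot\nabla w\,{\rm d}s,
\end{equation*}
since the second normal derivatives of $I_h\xi$ jump across those sub-edges (only $\nabla w$ is continuous there, as $\varphi_{\dg}|_K\in P_2(K)$ and $E_h\varphi_{\dg}\in C^1(K)$). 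These terms do not cancel against anything and must be estimated; the repair is routine (write $[D^2 I_h\xi\,\nu_e]_e=[D^2(I_h\xi-\xi)\,\nu_e]_e$ using the single-valuedness of the trace of $D^2\xi$ for $\alpha>1/2$, then apply Lemma~\ref{trace_inq}.b and the approximation bound exactly as you do for the interelement edges), but it is a missing step, not a cosmetic one. A second, smaller bookkeeping issue: the residual edge sum after cancelling $J(I_h\xi,w)$ runs over \emph{interior} edges only (on a boundary edge the paper's convention $[\cdot]_E=\langle\cdot\rangle_E=(\cdot)|_E$ means the whole contribution is already absorbed into $J$), so your identity with $\sum_{E\in\cE}$ double-counts boundary edges; this matters because your replacement $[D^2 I_h\xi\,\nu_E]_E=[D^2(I_h\xi-\xi)\,\nu_E]_E$ is wrong on boundary edges, where $[D^2\xi\,\nu_E]_E=D^2\xi\,\nu_E|_E\neq 0$. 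Restricting to $\cE(\Omega)$, as the correct derivation gives, removes the problem. The remaining ingredients (the bound $\|\xi-I_h\xi\|_h\lesssim h^\alpha\|\xi\|_{2+\alpha}$, the local pairing $h_K^{\alpha-1}\cdot h_K=h_K^\alpha$ for the volume term, and the componentwise deduction of \eqref{Inpol_BTW_A}) are sound.
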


\begin{proof}
	The method of real interpolation \cite[p. 374]{Brenner} of Sobolev spaces defines $H^{2+\alpha}(\Omega):=\left[ H^2(\Omega),H^3(\Omega)\right]_{\alpha,2}$ from $H^2(\Omega)$ and $H^3(\Omega)$.
	Given any $\xi\in H^2(\Omega)$ and $\varphi_{\dg}\in \St$, the boundedness of $a_{\dg}(\bullet,\bullet)$ and Lemma~\ref{enrichment_dG} imply
	\begin{align}\label{H2a}
	a_{\dg}(\Pi_h\xi,\varphi_{\dg}-E_h\varphi_{\dg})\lesssim\|\Pi_h\xi\|_{\dg}\|\varphi_{\dg}-E_h\varphi_{\dg}\|_{\dg}\lesssim \|\xi\|_2\|\varphi_{\dg}\|_{\dg}.
	\end{align}
	Given any $\xi\in H^3(\Omega)$ and $\varphi_{\dg}\in \St$, the definition of $a_{\dg}(\bullet,\bullet)$, an integration by parts, and Lemma~\ref{enrichment_dG} result in
	\begin{align*}
	a_{\dg}(\xi,\varphi_{\dg}-E_h\varphi_{\dg})
	&=\sum_{K\in\cT}\int_K D^2\xi:D^2(\varphi_{\dg}-E_h\varphi_{\dg})\dx- J(\xi,\varphi_{\dg}-E_h\varphi_{\dg})\notag\\
	&=\sum_{K\in\cT}\int_K\divc(D^2\xi)\cdot\nabla(\varphi_{\dg}-E_h\varphi_{\dg})\dx\leq Ch\|\xi\|_3\|\varphi_{\dg}\|_{\dg}.
	\end{align*}
	This,  Remark \ref{a_bdd_h}, \eqref{interpolation_est} with $\alpha=1$, and Lemma~\ref{enrichment_dG} lead to
	\begin{align}\label{H3a3}
	a_{\dg}(\Pi_h\xi,\varphi_{\dg}-E_h\varphi_{\dg})&=a_{\dg}(\Pi_h\xi-\xi,\varphi_{\dg}-E_h\varphi_{\dg})+a_{\dg}(\xi,\varphi_{\dg}-E_h\varphi_{\dg}) \nonumber \\
	& \lesssim h\|\xi\|_3\|\varphi_{\dg}\|_{\dg}.
	\end{align}
	The estimates \eqref{H2a}-\eqref{H3a3} and the interpolation between Sobolev spaces show
	\begin{equation*}
	a_{\dg}(\Pi_h\xi,\varphi_{\dg}-E_h\varphi_{\dg})\lesssim h^\alpha \|\xi\|_{2+\alpha}\|\varphi_{\dg}\|_{\dg}.
	\end{equation*}
	\noindent For any $\alpha>1/2$, the combination with Remark~\ref{a_bdd_h}, \eqref{interpolation_est}, and Lemma~\ref{enrichment_dG} results in
	\begin{align*}
	a_{\dg}(\xi,\varphi_{\dg}-E_h\varphi_{\dg})& =a_{\dg}(\xi-\Pi_h\xi,\varphi_{\dg}-E_h\varphi_{\dg})+a_{\dg}(\Pi_h\xi,\varphi_{\dg}-E_h\varphi_{\dg}) \\
	& \lesssim h^\alpha\|\xi\|_{2+\alpha}\|\varphi_{\dg}\|_{\dg}. 
	\end{align*}
	\noindent This concludes the proof of \eqref{Inpol_BTW}. The estimate \eqref{Inpol_BTW_A} follows from \eqref{Inpol_BTW} and the definition of $A_{\dg}(\bullet,\bullet)$.
\end{proof}


\begin{lem}[Boundedness of $b_{\dg}(\bullet, \bullet,\bullet)$]\label{b_dG_1} 
	\begin{enumerate}[(a)]
		\item Any  $\eta,\chi, \varphi\in X+\St$ satisfy $$\displaystyle b_{\dg}(\eta,\chi,\varphi)\lesssim \|\eta\|_{\dg}\|\chi\|_{\dg}\|\varphi\|_{\dg}.$$
		\item Given any $\alpha>1/2$, any $\eta\in X\cap H^{2+\alpha}(\Omega)$ and $\chi\in X+\St$ satisfy
		\begin{align*}
		b_{\dg}(\eta,\chi,\varphi)\lesssim
		\begin{cases}
		\|\eta\|_{2+\alpha}\|\chi\|_{\dg}\|\varphi\|_1\fl \varphi\in H^1_0(\Omega),\\
		\|\eta\|_{2+\alpha}\|\chi\|_{\dg}\|\varphi\|_{L^4(\Omega)}\fl\varphi\in X+\St.
		\end{cases}
		\end{align*}
	\end{enumerate}	
\end{lem}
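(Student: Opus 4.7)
My plan is to prove both boundedness statements by applying Hölder's inequality elementwise to separate the three factors of the trilinear form, then invoking the embeddings from Lemma~\ref{auxest} (for piecewise functions) and Sobolev embeddings on $\Omega$ (for the $H^{2+\alpha}$-regular factor in part (b)).

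For part (a), I start from
\[
|b_{\dg}(\eta,\chi,\varphi)| \leq \tfrac12 \sum_{K\in\cT} \|[\eta,\chi]\|_{L^1(K)}\,\|\varphi\|_{L^\infty(K)}
\leq \tfrac12\, \|\varphi\|_{L^\infty(\Omega)} \sum_{K\in\cT} |\eta|_{H^2(K)}|\chi|_{H^2(K)},
\]
where the elementwise Cauchy--Schwarz bound $\|[\eta,\chi]\|_{L^1(K)} \lesssim |\eta|_{H^2(K)}|\chi|_{H^2(K)}$ uses the definition of the von K\'arm\'an bracket. Then the discrete Cauchy--Schwarz inequality in the sum over $K$ yields $|\eta|_{H^2(\cT)} |\chi|_{H^2(\cT)} \leq \|\eta\|_{\dg}\|\chi\|_{\dg}$, while the final estimate $\|\varphi\|_{L^\infty(\Omega)} \lesssim \|\varphi\|_{\dg}$ for $\varphi\in X+\St$ is exactly Lemma~\ref{auxest}. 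This proves (a).

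For part (b), I keep $D^2\chi$ in $L^2$, bound $\varphi$ in the appropriate norm, and absorb the extra integrability demand into the $H^{2+\alpha}$-regularity of $\eta$. In 2D, Sobolev embedding gives $H^\alpha(\Omega)\hookrightarrow L^{p}(\Omega)$ with $p=2/(1-\alpha)>4$ since $\alpha>1/2$, hence $\|D^2\eta\|_{L^p(\Omega)} \lesssim \|\eta\|_{2+\alpha}$. For the first subcase, Hölder with exponents $(p,2,q)$ where $1/q = 1/2-1/p \in (0,1/4)$ gives
\[
|b_{\dg}(\eta,\chi,\varphi)| \lesssim \|D^2\eta\|_{L^p(\Omega)}\, |\chi|_{H^2(\cT)}\, \|\varphi\|_{L^q(\Omega)} \lesssim \|\eta\|_{2+\alpha}\,\|\chi\|_{\dg}\,\|\varphi\|_1,
\]
since $H^1_0(\Omega)\hookrightarrow L^q(\Omega)$ for every finite $q$. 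For the second subcase, $\varphi\in X+\St$, I instead apply Hölder with exponents $(4,2,4)$ and use $H^\alpha\hookrightarrow L^4$ (which holds for all $\alpha\geq 1/2$) to get $\|D^2\eta\|_{L^4(\Omega)}\lesssim \|\eta\|_{2+\alpha}$, yielding the desired bound directly.

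I do not anticipate a real obstacle: the essential point is just choosing the correct Hölder triples and verifying that the Sobolev indices cooperate with the assumption $\alpha>1/2$. The only mild subtlety is that $\chi$ and $\varphi$ are piecewise-regular rather than globally regular, which is why the $L^2$-slot is reserved for $D^2\chi$ (naturally piecewise) and why Lemma~\ref{auxest} is needed in part~(a) to control $\|\varphi\|_{L^\infty(\Omega)}$ and $\|\varphi\|_{L^4(\Omega)}$ by $\|\varphi\|_{\dg}$ on the broken space.
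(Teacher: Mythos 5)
Your proof is correct and follows essentially the same route as the paper: elementwise Cauchy--Schwarz on the bracket plus the $L^\infty$-bound of Lemma~\ref{auxest} for part (a), and H\"older combined with $H^{2+\alpha}(\Omega)\hookrightarrow W^{2,4}(\Omega)$ for part (b), where the paper simply deduces the first subcase from the second via $H^1_0(\Omega)\hookrightarrow L^4(\Omega)$ rather than re-running H\"older with a sharper triple. (Minor slip: with $p=2/(1-\alpha)$ one gets $1/q=1/2-1/p=\alpha/2\in(1/4,1/2]$, not $(0,1/4)$, but since $q\le 4$ the embedding $H^1_0(\Omega)\hookrightarrow L^q(\Omega)$ still holds and the argument is unaffected.)
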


\medskip
\begin{proof}
	\noindent{\it (a).}
	For $\eta,\chi,\varphi \in X+\St$, the definition of $b_{\dg}(\bullet,\bullet,\bullet)$ and Lemma~\ref{auxest} lead to
	\begin{align*}
	{|}2b_{\dg}(\eta,\chi,\varphi){|}&={|}\sik [\eta,\chi]\varphi\dx{|}\lesssim |\eta|_{ H^2(\cT)}|\chi|_{ H^2(\cT)}\|\eta\|_{L^{\infty}(\cT)}\\
	&\lesssim\|\eta\|_{\dg}\|\chi\|_{\dg}\|\varphi\|_{\dg}.
	\end{align*}

	\noindent{\it (b).} For $\eta\in X\cap H^{2+\alpha}(\Omega)$, $\chi\in X+\St$, and $\varphi\in H^1_0(\Omega)\cup (X+\St)$, the generalized \Holder inequality and the continuous imbedding $H^{2+\alpha}(\Omega)\hookrightarrow W^{2,4}(\Omega)$ yields
	\begin{align*}
	{|}2b_{\dg}(\eta,\chi,\varphi){|}&={|}\sik [\eta,\chi]\varphi\dx{|}
	\lesssim \|\eta\|_{W^{2,4}(\Omega)}\|\chi\|_{H^{2}(\cT)}\|\varphi\|_{L^4(\Omega)}\\
	&\lesssim\|\eta\|_{2+\alpha}\|\chi\|_{\dg}\|\varphi\|_{L^4(\Omega)}.
	\end{align*}
	This implies second part of (b). For $\varphi\in H^{1}_0(\Omega)\hookrightarrow L^{4}(\Omega)$ this proves the first.
\end{proof}

\section{{\it A priori} error control}

This section first establishes the discrete inf-sup condition for the linearized problem, then the existence of a discrete solution to the nonlinear problem \eqref{vform_d_dG}, and finally the convergence of a Newton method. 

\subsection{Discrete inf-sup condition}
This subsection is devoted to discrete inf-sup condition. Throughout the paper, the statement {\it ``there exists $\ubar{\sigma}_2$ such that for all $\sigma_2\geq \ubar{\sigma}_2$ as in Lemma~\ref{ellipticity} on ellipticity, there exists $h(\sigma_2)$ such that for all $h\leq h(\sigma_2)$ ...''} is abbreviated by the phrase {\it ``for sufficiently large $\sigma_2$ and sufficiently small $h$ ...''.}

\begin{thm}[Discrete inf-sup condition]\label{dis_inf_sup_thm} Let $\Psi\in \bH^{2+\alpha}(\Omega)\cap \bH^2_0(\Omega)$ be a regular solution to \eqref{vform_cts_dG}. For sufficiently large $\sigma_2$ and sufficiently small $h$, there exists $\widehat{\beta}$ such that the following discrete inf-sup condition holds
	\begin{align}\label{dis_inf_sup_dG}
	0<\widehat{\beta}\leq \inf_{\substack{\Theta_{\dg}\in \Stb\\ \trinl\Theta_{\dg}\trinr_{\dg}=1}}\sup_{\substack{\Phi_{\dg}\in \Stb\\ \trinl\Phi_{\dg}\trinr_{\dg}=1}}\Big{(}A_{\dg}(\Theta_{\dg},\Phi_{\dg})+2B_{\dg}(\Psi,\Theta_{\dg},\Phi_{\dg})\Big{)}.
	\end{align}
\end{thm}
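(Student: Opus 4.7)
The goal is to transfer the continuous inf-sup \eqref{inf-sup_dG} to the discrete level by a Fortin-type argument that passes through the conforming enrichment $E_h:\Stb\to\bS_4(\cT)\subset\bX$ and returns to $\Stb$ via the interpolation operator $\Pi_h$ of Lemma~\ref{interpolant_dG}. Given $\Theta_{\dg}\in\Stb$ with $\trinl\Theta_{\dg}\trinr_{\dg}=1$, I would first set $\widetilde\Theta:=E_h\Theta_{\dg}\in\bX$; Lemma~\ref{enrichment_dG} yields $\trinl\widetilde\Theta\trinr_2\lesssim 1$ and the enrichment error $\trinl\Theta_{\dg}-\widetilde\Theta\trinr_{\dg}^2$ is controlled by the jump contributions in $\trinl\Theta_{\dg}\trinr_{\dg}^2$. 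I then construct $\widetilde\Phi\in\bX$ as the solution of the linearized \emph{adjoint} problem with right-hand side determined by $\widetilde\Theta$: by \eqref{inf-sup_dG} (which is equivalent to invertibility of the adjoint) the solution exists, and elliptic regularity of Blum--Rannacher type for the biharmonic principal part with the lower-order perturbation $2B(\Psi,\cdot,\cdot)$ (whose coefficients lie in $\bH^{2+\alpha}$) upgrades $\widetilde\Phi$ to $\bH^{2+\alpha}(\Omega)\cap\bX$ with $\trinl\widetilde\Phi\trinr_{2+\alpha}\lesssim\trinl\widetilde\Theta\trinr_2$ and
\[
A(\widetilde\Theta,\widetilde\Phi)+2B(\Psi,\widetilde\Theta,\widetilde\Phi)\gtrsim\trinl\widetilde\Theta\trinr_2^2.
\]

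\textbf{Step 2 (discrete test).} Set $\Phi_{\dg}:=\Pi_h\widetilde\Phi\in\Stb$. Lemma~\ref{interpolant_dG} gives $\trinl\widetilde\Phi-\Phi_{\dg}\trinr_{\dg}\lesssim h^\alpha\trinl\widetilde\Phi\trinr_{2+\alpha}$ and $\trinl\Phi_{\dg}\trinr_{\dg}\lesssim 1$. I would then expand the discrete bilinear form on $(\Theta_{\dg},\Phi_{\dg})$ and compare term by term to $A(\widetilde\Theta,\widetilde\Phi)+2B(\Psi,\widetilde\Theta,\widetilde\Phi)$. The difference decomposes into four kinds of error: (i) the nonconformity/consistency term $A_{\dg}(\widetilde\Theta,\Phi_{\dg}-\widetilde\Phi)$, estimated by the vectorial version of Lemma~\ref{intermediate} together with $\widetilde\Theta\in\bX\cap\bS_4(\cT)$; (ii) the enrichment error $A_{\dg}(\Theta_{\dg}-\widetilde\Theta,\Phi_{\dg})$, which is controlled by Remark~\ref{a_bdd_h}, the bound $\trinl\Theta_{\dg}-\widetilde\Theta\trinr_{\dg}^2\lesssim$ (jump terms in $\trinl\Theta_{\dg}\trinr_{\dg}$) from Lemma~\ref{enrichment_dG}, and shape regularity; (iii) the trilinear interpolation error $B_{\dg}(\Psi,\widetilde\Theta,\Phi_{\dg}-\widetilde\Phi)$, handled via Lemma~\ref{b_dG_1}(b) using $\Psi\in\bH^{2+\alpha}$ to pull out an $L^4$-factor that profits from the interpolation bound; (iv) the analogous trilinear enrichment error $B_{\dg}(\Psi,\Theta_{\dg}-\widetilde\Theta,\Phi_{\dg})$, again bounded via Lemma~\ref{b_dG_1}(b).

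\textbf{Step 3 (absorption).} Each of (i)--(iv) carries a factor of $h^\alpha$ or a factor from the jump norm that is small once $\sigma_2$ is sufficiently large (the coercivity constant $\beta_0$ of Lemma~\ref{ellipticity} feeds into how much jump norm is absorbed). For $h\leq h(\sigma_2)$ small enough, these errors are absorbed into $\tfrac{1}{2}\trinl\widetilde\Theta\trinr_2^2$, yielding
\[
A_{\dg}(\Theta_{\dg},\Phi_{\dg})+2B_{\dg}(\Psi,\Theta_{\dg},\Phi_{\dg})\gtrsim\trinl\widetilde\Theta\trinr_2^2.
\]
Finally I need a uniform lower bound $\trinl\widetilde\Theta\trinr_2\gtrsim 1$. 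For this I run a case distinction: either the jump content of $\Theta_{\dg}$ is small (whence $\trinl\widetilde\Theta\trinr_2\gtrsim\trinl\Theta_{\dg}\trinr_{\dg}=1$ by Lemma~\ref{enrichment_dG}), or the jumps dominate, in which case I test instead with $\Phi_{\dg}=\Theta_{\dg}$ and use the discrete coercivity of Lemma~\ref{ellipticity} together with Lemma~\ref{b_dG_1}(b) to absorb $2B_{\dg}(\Psi,\Theta_{\dg},\Theta_{\dg})$ into the jump part of $\|\Theta_{\dg}\|_{\dg}^2$ for large $\sigma_1,\sigma_2$. Taking a convex combination of the two test functions produces a single $\Phi_{\dg}$ realising the required lower bound $\widehat\beta>0$.

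\textbf{Main obstacle.} The subtle point is the regularity lift for $\widetilde\Phi$: the raw inf-sup only yields a test function in $\bX$, which is not regular enough to interpolate at rate $h^\alpha$. Identifying $\widetilde\Phi$ with the (adjoint) linearized solution and invoking polygonal biharmonic regularity is what makes the Fortin construction work; equally delicate is ensuring that the enriched norm $\trinl\widetilde\Theta\trinr_2$ does not degenerate to zero when $\Theta_{\dg}$ is highly discontinuous, which is why the case split leaning on Lemma~\ref{ellipticity} is necessary in the final step.
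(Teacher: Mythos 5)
Your high-level strategy (push the continuous inf-sup \eqref{inf-sup_dG} through the enrichment $E_h$ and back with $\Pi_h$, using elliptic regularity to make the interpolation error $O(h^\alpha)$) is the right one, and you correctly locate the two danger points. But the specific construction breaks at the first of them: the regularity lift for $\widetilde\Phi$ does not go through. If $\widetilde\Phi$ is the adjoint linearized solution chosen so that $A(\widetilde\Theta,\widetilde\Phi)+2B(\Psi,\widetilde\Theta,\widetilde\Phi)\gtrsim\trinl\widetilde\Theta\trinr_2^2$, then the natural datum of that adjoint problem is $A(\cdot,\widetilde\Theta)$, i.e.\ $\Delta^2\widetilde\Theta$, and $\widetilde\Theta=E_h\Theta_{\dg}$ is only a piecewise polynomial in $\bH^2_0(\Omega)$; the datum therefore lies merely in $\bH^{-2}(\Omega)$ and Blum--Rannacher regularity gives nothing beyond $\bH^2$. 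Any weaker datum in $\bH^{-1}$ that would admit the lift cannot reproduce the full $H^2$-energy of $\widetilde\Theta$ upon testing. The paper circumvents exactly this: it never lifts the principal part at all. Only the lower-order functional $2B_{\dg}(\Psi,\Theta_{\dg},\cdot)$, which \emph{is} in $\bH^{-1}(\Omega)$ by Lemma~\ref{b_dG_1}.b, is lifted to a regular $\boldsymbol{\xi}\in\bH^{2+\alpha}(\Omega)\cap\bX$ via a biharmonic solve; the discrete test function is then produced purely algebraically by the ellipticity of $A_{\dg}$ (Lemma~\ref{ellipticity}) applied to $\Theta_{\dg}+\Pi_h\boldsymbol{\xi}$, and the continuous inf-sup enters only to bound $\trinl\Theta_{\dg}+\boldsymbol{\xi}\trinr_{\dg}$ from below --- with the best-approximation property \eqref{EhLambda} (competitor $-\boldsymbol{\xi}\in\bX$) controlling the enrichment error by the \emph{same} quantity, which also disposes of your degeneration worry without any case split.

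Two further points. First, your Step~3 claims the enrichment-error terms carry ``a factor from the jump norm that is small once $\sigma_2$ is sufficiently large''; this conflates the $\sigma$-weighted jump content of $\trinl\Theta_{\dg}\trinr_{\dg}^2$ (which is normalized to be of order one and does not shrink as the penalties grow) with the unweighted jumps (which are $O(\sigma^{-1/2})$). The term $A_{\dg}(\Theta_{\dg}-E_h\Theta_{\dg},\Phi_{\dg})$ is of order $\trinl\Theta_{\dg}-E_h\Theta_{\dg}\trinr_{\dg}$, hence $O(1)$ in general, and even a careful term-by-term treatment leaves contributions of size $\min(\sigma_1,\sigma_2)^{-1/2}$, so your argument would at best prove the theorem for \emph{both} penalties large, whereas Lemma~\ref{ellipticity} and the theorem allow any fixed $\sigma_1>0$. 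Second, the final ``convex combination of the two test functions'' is unnecessary (an inf-sup only needs some $\Phi_{\dg}$ for each $\Theta_{\dg}$, so a case split suffices) and as stated could cancel; your case~2 itself is sound, since $|2B_{\dg}(\Psi,\Theta_{\dg},\Theta_{\dg})|\lesssim\trinl\Psi\trinr_{2+\alpha}\,|\Theta_{\dg}|_{H^2(\cT)}\|\Theta_{\dg}\|_{L^4(\Omega)}$ is genuinely small when the broken $H^2$-seminorm is, but it is the unneeded half of a detour the paper's single unified estimate avoids.
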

\begin{proof}
	Given any $\Theta_{\dg}\in\Stb$ with $\trinl\Theta_{\dg}\trinr_{\dg}=1$, let $\boldsymbol{\xi}\in \bX$ and $\boldsymbol{\eta}\in \bX$ solve the biharmonic problems
	\begin{align}
	A(\boldsymbol{\xi},\Phi)&=2B_{\dg}(\Psi,\Theta_{\dg},\Phi)\fl\Phi\in \bX,\label{defn_xi_dG}\\
	A(\boldsymbol{\eta},\Phi)&=2B(\Psi,E_h\Theta_{\dg},\Phi)\fl\Phi\in \bX.\label{defn_Eta_dG}
	\end{align}
	Lemma~\ref{b_dG_1}.b implies that $B_{\dg}(\Psi,\tilde{\Theta},\bullet)$ and $B (\Psi,\tilde{\Theta},\bullet)$ belong to $\bH^{-1}(\Omega)$ for $\tilde{\Theta}\in \bX+\Stb$. The reduced elliptic regularity for the biharmonic problem \cite{BlumRannacher} yields $\boldsymbol{\xi},\boldsymbol{\eta}\in \bH^{2+\alpha}(\Omega)\cap \bX$. Since $\Psi$ is a regular solution to \eqref{vform_cts_dG}, there exists $\beta$ from \eqref{inf-sup_dG} and $\Phi\in \bX$ with $\trinl\Phi\trinr_2=1$ such that
	\begin{align*}
	\beta\trinl E_h\Theta_{\dg}\trinr_2\leq A(E_h\Theta_{\dg},\Phi)+2B(\Psi,E_h\Theta_{\dg},\Phi).
	\end{align*}
	The solution property in \eqref{defn_Eta_dG}, the boundedness of $A(\bullet,\bullet)$, and the triangle inequality in the above result imply
	\begin{align*} 
	\beta\trinl E_h\Theta_{\dg}\trinr_2 & \leq A(E_h\Theta_{\dg}+\boldsymbol{\eta},\Phi)\leq \trinl E_h\Theta_{\dg}+\boldsymbol{\eta}\trinr_2 \nonumber \\
	& \leq \trinl  E_h\Theta_{\dg}-\Theta_{\dg}\trinr_{\dg}+\trinl\Theta_{\dg}+\boldsymbol{\xi}\trinr_{\dg}+\trinl\boldsymbol{\eta}-\boldsymbol{\xi}\trinr_2.
	\end{align*}
	The definition of $\boldsymbol{\xi},\boldsymbol{\eta}$ in \eqref{defn_xi_dG}-\eqref{defn_Eta_dG} and Lemma~\ref{b_dG_1}.a lead to
	\begin{equation*} 
	\trinl\boldsymbol{\eta}-\boldsymbol{\xi}\trinr_2\leq 2C_b\trinl\Psi\trinr_{2}\trinl E_h\Theta_{\dg}-\Theta_{\dg}\trinr_{\dg}
	\end{equation*}
	for some positive constant $C_b\approx 1$.
	The combination of the previous two displayed inequalities reads
	\begin{equation*}
	\beta\trinl E_h\Theta_{\dg}\trinr_2\leq \trinl\Theta_{\dg}+\boldsymbol{\xi}\trinr_{\dg}+\left(1+2C_b\trinl\Psi\trinr_2\right)\trinl E_h\Theta_{\dg}-\Theta_{\dg}\trinr_{\dg}.
	\end{equation*}
	This and \eqref{EhLambda} result in 
	\begin{equation}\label{theta_final_bdd_dG}
	\trinl E_h\Theta_{\dg}\trinr_2\leq \frac{1}{\beta}\Big{(}1+\Lambda\left(1+2C_b\trinl\Psi\trinr_2\right)\Big{)}\trinl\Theta_{\dg}+\boldsymbol{\xi}\trinr_{\dg}.
	\end{equation}
	The triangle inequality, \eqref{theta_final_bdd_dG}, and \eqref{EhLambda} lead to
	\begin{align*}
	1&=\trinl\Theta_{\dg}\trinr_{\dg}\leq \trinl E_h\Theta_{\dg}-\Theta_{\dg}\trinr_{\dg}+\trinl E_h\Theta_{\dg}\trinr_2\notag\\
	&\leq\left(\Lambda+\frac{1}{\beta}\Big{(}1+\Lambda\left(1+2C_b\trinl\Psi\trinr_2\right)\Big{)}\right)\trinl\Theta_{\dg}+\boldsymbol{\xi}\trinr_{\dg}.
	\end{align*}
	In other words, $\displaystyle \beta_1:=\beta/\left(1+\Lambda\left(1+\beta+2C_b\trinl\Psi\trinr_2\right)\right)$ satisfies
	\begin{equation}
	 \beta_1\leq \trinl\Theta_{\dg}+\boldsymbol{\xi}\trinr_{\dg}\leq \trinl\Theta_{\dg}+\Pi_h\boldsymbol{\xi}\trinr_{\dg}+\trinl \boldsymbol{\xi}-\Pi_h\boldsymbol{\xi}\trinr_{\dg}.\label{betak_bdd_dG}
	\end{equation}
	For any given $\Theta_{\dg}+\Pi_h\boldsymbol{\xi}\in \Stb$, the ellipticity of $A_{\dg}(\bullet,\bullet)$ from Lemma~\ref{ellipticity} implies the existence of some $\Phi_{\dg}\in\Stb$ with $\trinl\Phi_{\dg}\trinr_{\dg}=1$ and
	\begin{align*}
	& \beta_0\trinl\Theta_{\dg}+\Pi_h\boldsymbol{\xi}\trinr_{\dg} \leq A_{\dg}(\Theta_{\dg}+\Pi_h\boldsymbol{\xi},\Phi_{\dg}) \notag\\ 
	&\qquad =A_{\dg}(\Theta_{\dg},\Phi_{\dg})+A_{\dg}(\Pi_h\boldsymbol{\xi}-\boldsymbol{\xi},\Phi_{\dg})+A_{\dg}(\boldsymbol{\xi},\Phi_{\dg}-E_h\Phi_{\dg})+A(\boldsymbol{\xi},E_h\Phi_{\dg}).
	\end{align*}
	The choice of $\Phi=E_h\Phi_{\dg}$ in \eqref{defn_xi_dG} plus straightforward calculations result in
	\begin{align}
	\beta_0\trinl\Theta_{\dg}+\Pi_h\boldsymbol{\xi}\trinr_{\dg}
	& \leq A_{\dg}(\Theta_{\dg},\Phi_{\dg})+ A_{\dg}(\Pi_h\boldsymbol{\xi}-\boldsymbol{\xi},\Phi_{\dg}) 
	+A_{\dg}(\boldsymbol{\xi},\Phi_{\dg}-E_h\Phi_{\dg}) \notag \\
	& \; \; + 2B_{\dg}(\Psi,\Theta_{\dg},\Phi_{\dg})
	+2B_{\dg}(\Psi,\Theta_{\dg},E_h\Phi_{\dg}-\Phi_{\dg}). \label{final_inf-sup_bdd_dG}
	\end{align}
	\noindent Remark~\ref{a_bdd_h}, Lemma \ref{interpolant_dG} and \ref{intermediate} for the second and third term, plus Lemma~\ref{b_dG_1}.b and \ref{enrichment_dG} for the last term in \eqref{final_inf-sup_bdd_dG} lead to 	
	\begin{equation}\label{pre_inf-sup_bdd_dG}
	\beta_0\trinl\Theta_{\dg}+\Pi_h\boldsymbol{\xi}\trinr_{\dg}\leq A_{\dg}(\Theta_{\dg},\Phi_{\dg})+2B_{\dg}(\Psi,\Theta_{\dg},\Phi_{\dg})+Ch^{\alpha}.
	\end{equation}
	The combination of \eqref{betak_bdd_dG} and \eqref{pre_inf-sup_bdd_dG} with Lemma~\ref{interpolant_dG} shows
	\begin{align*}
	\beta_0\beta_1-C_*h^\alpha\leq A_{\dg}(\Theta_{\dg},\Phi_{\dg})+2B_{\dg}(\Psi,\Theta_{\dg},\Phi_{\dg})
	\end{align*}
	for some $h$-independent positive constant $C_*$. Hence,  for all  $h\leq h_0:=(\beta_0\beta_1/2C_*)^{\frac{1}{\alpha}}$, the discrete inf-sup condition \eqref{dis_inf_sup_dG} follows.
\end{proof}

\noindent The next lemma establishes that the perturbed bilinear form  
\begin{equation}\label{defnAht_dG}
\tilde \cA_{\dg}(\Theta_{\dg},\Phi_{\dg}):= A_{\dg}(\Theta_{\dg},\Phi_{\dg})+2B_{\dg}(\Pi_h\Psi,\Theta_{\dg},\Phi_{\dg})
\end{equation}
satisfies a discrete inf-sup condition.

\begin{lem}\label{regular_dG} 
    Let $\Pi_h\Psi$ be the interpolation of $\Psi$ from Lemma~\ref{interpolant_dG}.
	Then, for sufficiently large $\sigma_2$ and sufficiently small $h$, the perturbed bilinear form \eqref{defnAht_dG} satisfies the discrete inf-sup condition
	\begin{align}\label{dis_inf_sup_dG_perturb}
	\frac{\widehat{\beta}}{2}\leq \inf_{\substack{\Theta_{\dg}\in \Stb\\ \trinl\Theta_{\dg}\trinr_{\dg}=1}}\sup_{\substack{\Phi_{\dg}\in \Stb\\ \trinl\Phi_{\dg}\trinr_{\dg}=1}}\tilde \cA_{\dg}(\Theta_{\dg},\Phi_{\dg}).
	\end{align}
\end{lem}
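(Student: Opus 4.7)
The plan is to treat $\tilde{\mathcal{A}}_{\dg}$ as a compact perturbation of the bilinear form already handled in Theorem~\ref{dis_inf_sup_thm}. More precisely, the difference between the bilinear form $(\Theta_{\dg},\Phi_{\dg})\mapsto A_{\dg}(\Theta_{\dg},\Phi_{\dg})+2B_{\dg}(\Psi,\Theta_{\dg},\Phi_{\dg})$ and $\tilde{\mathcal{A}}_{\dg}(\Theta_{\dg},\Phi_{\dg})$ is exactly $2B_{\dg}(\Psi-\Pi_h\Psi,\Theta_{\dg},\Phi_{\dg})$, and the interpolation estimate \eqref{interpolation_est} together with the boundedness of $B_{\dg}$ from Lemma~\ref{b_dG_1}.a will ensure that this perturbation is $\lesssim h^{\alpha}$, hence negligible for sufficiently small $h$.

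First, given any $\Theta_{\dg}\in\Stb$ with $\trinl\Theta_{\dg}\trinr_{\dg}=1$, I would invoke Theorem~\ref{dis_inf_sup_thm} to obtain $\Phi_{\dg}\in\Stb$ with $\trinl\Phi_{\dg}\trinr_{\dg}=1$ realizing (up to an arbitrarily small loss) the inf-sup bound
\begin{equation*}
\widehat{\beta} \leq A_{\dg}(\Theta_{\dg},\Phi_{\dg})+2B_{\dg}(\Psi,\Theta_{\dg},\Phi_{\dg}).
\end{equation*}
Adding and subtracting $2B_{\dg}(\Pi_h\Psi,\Theta_{\dg},\Phi_{\dg})$ and using the definition \eqref{defnAht_dG} rewrites this as
\begin{equation*}
\widehat{\beta} \leq \tilde{\mathcal{A}}_{\dg}(\Theta_{\dg},\Phi_{\dg}) + 2B_{\dg}(\Psi-\Pi_h\Psi,\Theta_{\dg},\Phi_{\dg}).
\end{equation*}
Then I would apply Lemma~\ref{b_dG_1}.a componentwise (noting $\Psi-\Pi_h\Psi\in\bX+\Stb$) to bound
\begin{equation*}
|2B_{\dg}(\Psi-\Pi_h\Psi,\Theta_{\dg},\Phi_{\dg})| \lesssim \trinl\Psi-\Pi_h\Psi\trinr_{\dg}\,\trinl\Theta_{\dg}\trinr_{\dg}\,\trinl\Phi_{\dg}\trinr_{\dg},
\end{equation*}
and invoke Lemma~\ref{interpolant_dG}, specifically \eqref{interpolation_est}, to control $\trinl\Psi-\Pi_h\Psi\trinr_{\dg}\lesssim h^{\alpha}\trinl\Psi\trinr_{2+\alpha}$.

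Combining these estimates produces a constant $\widetilde{C}\approx 1$ (depending on $\trinl\Psi\trinr_{2+\alpha}$ but independent of $h$) with
\begin{equation*}
\widehat{\beta}-\widetilde{C}h^{\alpha} \leq \tilde{\mathcal{A}}_{\dg}(\Theta_{\dg},\Phi_{\dg}).
\end{equation*}
Choosing $h$ small enough (smaller than $h(\sigma_2)$ from Theorem~\ref{dis_inf_sup_thm} and additionally such that $\widetilde{C}h^{\alpha}\leq \widehat{\beta}/2$) yields $\tilde{\mathcal{A}}_{\dg}(\Theta_{\dg},\Phi_{\dg})\geq \widehat{\beta}/2$; taking the supremum over unit $\Phi_{\dg}$ and the infimum over unit $\Theta_{\dg}$ delivers \eqref{dis_inf_sup_dG_perturb}. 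There is essentially no genuine obstacle here — the perturbation argument is routine — the only subtle point is making sure that the dependence of $\widetilde{C}$ on $\Psi$ is harmless, i.e.\ that $h(\sigma_2)$ can be further shrunk in an $h$-independent way, which is exactly how the phrase ``sufficiently small $h$'' in the hypothesis is meant to be used.
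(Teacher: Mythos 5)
Your proposal is correct and follows essentially the same route as the paper: both treat $\tilde{\mathcal{A}}_{\dg}$ as a perturbation of the bilinear form from Theorem~\ref{dis_inf_sup_thm} and absorb the difference $2B_{\dg}(\Psi-\Pi_h\Psi,\Theta_{\dg},\Phi_{\dg})$ via Lemma~\ref{b_dG_1}.a and the interpolation estimate, shrinking $h$ so the perturbation is at most $\widehat{\beta}/2$. The only cosmetic difference is that the paper manipulates suprema directly (requiring $\trinl\Psi-\Pi_h\Psi\trinr_{\dg}\leq\widehat{\beta}/4C_b$ for $h\leq h_1$) rather than fixing a near-maximizing $\Phi_{\dg}$ as you do.
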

{\it Proof.}
	Lemma~\ref{interpolant_dG} leads to the existence of $h_1>0$, such that  $\trinl\Psi-\Pi_h\Psi\trinr_{\dg}\leq\widehat{\beta}/4C_b$ holds for $h\leq h_1$. Given any $\Theta_{\dg}\in\St$,
	Theorem~\ref{dis_inf_sup_thm} and  Lemma~\ref{b_dG_1}.a lead to 
	\begin{align*}
	&\frac{\widehat{\beta}}{2}\trinl\Theta_{\dg}\trinr_{\dg}\leq \widehat{\beta}\trinl\Theta_{\dg}\trinr_{\dg}-2C_b\trinl\Psi-\Pi_h\Psi\trinr_{\dg}\trinl\Theta\trinr_{\dg}\\
	&\leq \sup_{\substack{\Phi_{\dg}\in \Stb\\ \trinl\Phi_{\dg}\trinr_{\dg}=1}} \left(A_{\dg}(\Theta_{\dg},\Phi_{\dg})+2B_{\dg}(\Psi,\Theta_{\dg},\Phi_{\dg})\right) \\
	& \qquad\qquad\qquad-2\sup_{\substack{\Phi_{\dg}\in \Stb\\ \trinl\Phi_{\dg}\trinr_{\dg}=1}} B_{\dg}(\Psi-\Pi_h\Psi,\Theta_{\dg},\Phi_{\dg})\\
	&\leq\sup_{\substack{\Phi_{\dg}\in \Stb\\ \trinl\Phi_{\dg}\trinr_{\dg}=1}} \left(A_{\dg}(\Theta_{\dg},\Phi_{\dg})+2B_{\dg}(\Pi_h\Psi,\Theta_{\dg},\Phi_{\dg})\right)=\sup_{\substack{\Phi_{\dg}\in \Stb\\ \trinl\Phi_{\dg}\trinr_{\dg}=1}}\tilde{\cA}_{\dg}(\Theta_{\dg},\Phi_{\dg}).\qed
	\end{align*}

\subsection{Existence, uniqueness and error estimate}
The discrete inf-sup condition is employed to define a nonlinear map $\mu:\Stb\to\Stb$ which enables to analyze the existence and uniqueness of solution of \eqref{vform_d_dG}. 
For any $\Theta_{\dg}\in\Stb$, define $\mu(\Theta_{\dg})\in\Stb$ as the solution to the discrete fourth-order problem
\begin{align}\label{defnmu_dG}
\tilde{\cA}_{\dg}(\mu(\Theta_{\dg}),\Phi_{\dg})=L_{\dg}(\Phi_{\dg})+2B_{\dg}(\Pi_h\Psi,\Theta_{\dg},\Phi_{\dg})-B_{\dg}(\Theta_{\dg},\Theta_{\dg},\Phi_{\dg})
\end{align} 
for all $\Phi_{\dg}\in\Stb$.
Lemma~\ref{regular_dG} guarantees that the mapping $\mu$ is well-defined and continuous. Also, any fixed point of $\mu$ is a solution to \eqref{vform_d_dG} and vice-versa. In order to show that the mapping $\mu$ has a fixed point, define the ball \[\bB_{R}(\Pi_h\Psi):=\Big{\{}\Phi_{\dg} \in\Stb: \trinl\Phi_{\dg}-\Pi_h\Psi\trinr_{\dg}\leq R \Big{\}}.\]

\begin{thm}[Mapping of ball to ball]\label{mapball2ball}
	For sufficiently large $\sigma_2$ and sufficiently small $h$, there exists a positive constant $R(h)$ such that $\mu$ maps the ball $\bB_{R(h)}(\Pi_h\Psi)$ to itself; $\displaystyle\trinl\mu(\Theta_{\dg} )-\Pi_h\Psi\trinr_{\dg}\leq R(h)$ holds for any $\Theta_{\dg}  \in \bB_{R(h)}(\Pi_h\Psi)$.
\end{thm}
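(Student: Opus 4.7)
The plan is to apply the perturbed discrete inf-sup condition of Lemma~\ref{regular_dG} to the difference $\mu(\Theta_{\dg}) - \Pi_h\Psi$ and show that the resulting right-hand side is dominated by $C(h^\alpha + R^2)$ uniformly in $\Theta_{\dg} \in \bB_R(\Pi_h\Psi)$. For any test function $\Phi_{\dg} \in \Stb$ with $\trinl\Phi_{\dg}\trinr_{\dg} = 1$, the definition \eqref{defnmu_dG} of $\mu$ together with the symmetry of $B_{\dg}$ in its first two arguments (used to rewrite $B_{\dg}(\Theta_{\dg},\Theta_{\dg},\Phi_{\dg})$ by the binomial expansion around $\Pi_h\Psi$) leads after algebraic rearrangement to
\[
\tilde{\cA}_{\dg}(\mu(\Theta_{\dg}) - \Pi_h\Psi, \Phi_{\dg}) = -N_h(\Pi_h\Psi; \Phi_{\dg}) - B_{\dg}(\Theta_{\dg} - \Pi_h\Psi,\, \Theta_{\dg} - \Pi_h\Psi,\, \Phi_{\dg}).
\]
The quadratic term on the right is controlled by $C R^2$ via Lemma~\ref{b_dG_1}.a. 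It therefore suffices to establish the consistency bound $|N_h(\Pi_h\Psi; \Phi_{\dg})| \lesssim h^\alpha$.

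For this consistency estimate I would split $N_h(\Pi_h\Psi; \Phi_{\dg}) = [N_h(\Pi_h\Psi;\Phi_{\dg}) - N_h(\Psi;\Phi_{\dg})] + N_h(\Psi;\Phi_{\dg})$. The Lipschitz-type difference in brackets decomposes into terms linear in $\Pi_h\Psi - \Psi$, each bounded by the boundedness of $A_{\dg}$ and Lemma~\ref{b_dG_1}.a together with the interpolation bound $\trinl\Psi - \Pi_h\Psi\trinr_{\dg} \lesssim h^\alpha$ from \eqref{interpolation_est}. For the residual piece, I would exploit that $E_h\Phi_{\dg} \in S_4(\cT) \subset \bX$ is $H^2$-conforming, so $A_{\dg}$, $B_{\dg}$, $L_{\dg}$ coincide with the continuous $A$, $B$, $L$ on $E_h\Phi_{\dg}$; the Galerkin identity $N(\Psi; E_h\Phi_{\dg}) = 0$ then yields
\[
N_h(\Psi; \Phi_{\dg}) = A_{\dg}(\Psi, \Phi_{\dg} - E_h\Phi_{\dg}) + B_{\dg}(\Psi, \Psi, \Phi_{\dg} - E_h\Phi_{\dg}) - L_{\dg}(\Phi_{\dg} - E_h\Phi_{\dg}).
\]
The first term is $\lesssim h^\alpha$ by Lemma~\ref{intermediate}; the trilinear term is bounded via Lemma~\ref{b_dG_1}.b combined with the interpolation $\|\Phi_{\dg} - E_h\Phi_{\dg}\|_{L^4} \leq \|\Phi_{\dg} - E_h\Phi_{\dg}\|_{L^2}^{1/2}\|\Phi_{\dg} - E_h\Phi_{\dg}\|_{L^\infty}^{1/2} \lesssim h$ (from Lemma~\ref{enrichment_dG} at $m=0$ and Lemma~\ref{auxest}); the load term is $\lesssim h^2 \|f\|_{L^2}$ from Lemma~\ref{enrichment_dG} at $m=0$.

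Collecting the estimates and invoking Lemma~\ref{regular_dG} produces
\[
\trinl\mu(\Theta_{\dg}) - \Pi_h\Psi\trinr_{\dg} \leq \frac{2}{\widehat{\beta}}\bigl(C_1 h^\alpha + C_2 R^2\bigr)
\]
with $h$-independent positive constants $C_1, C_2$. Choosing $R(h) := 4 C_1 h^\alpha / \widehat{\beta}$ reduces the self-mapping condition to $16\,C_1 C_2 h^\alpha \leq \widehat{\beta}^2$, which is satisfied for all $h$ below some threshold $h_2 > 0$; this completes the argument. The main technical obstacle in the plan is the consistency bound on $N_h(\Psi; \Phi_{\dg})$: it is precisely here that the non-conformity of $\Stb$ and the limited $H^{2+\alpha}$-regularity of $\Psi$ meet, forcing the use of the enrichment operator $E_h$ together with Lemma~\ref{intermediate} and dictating the overall rate $h^\alpha$, while the $h$- and $h^2$-contributions from the nonlinear and load terms remain subordinate.
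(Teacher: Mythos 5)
Your proposal is correct and follows essentially the same route as the paper's proof: the perturbed inf-sup condition of Lemma~\ref{regular_dG}, the binomial identity reducing the nonlinearity to the quadratic term $B_{\dg}(\Theta_{\dg}-\Pi_h\Psi,\Theta_{\dg}-\Pi_h\Psi,\Phi_{\dg})$, and a consistency estimate built from $N(\Psi;E_h\Phi_{\dg})=0$, Lemma~\ref{intermediate}, the interpolation and enrichment bounds, and Lemma~\ref{b_dG_1}. The only difference is bookkeeping (you insert $\pm N_h(\Psi;\Phi_{\dg})$ where the paper splits directly into $T_1,\dots,T_4$), and your final choice $R(h)\approx h^\alpha$ with a smallness threshold on $h$ matches the paper's conclusion.
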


\begin{proof}
	The discrete inf-sup condition of $\tilde{\cA}_{\dg}(\bullet,\bullet)$ in Lemma \ref{regular_dG} implies the existence of $\Phi_{\dg} \in\Stb$ with  $\trinl\Phi_{\dg} \trinr_{\dg}=1$ and 
	\begin{align*}
	&\frac{\widehat{\beta}}{2}\trinl\mu(\Theta_{\dg})-\Pi_h\Psi\trinr_{\dg}\leq \tilde\cA_{\dg}(\mu(\Theta_{\dg})-\Pi_h\Psi, \Phi_{\dg}).
	\end{align*}
	Let $E_h\Phi_{\dg}$ be the enrichment of $\Phi_{\dg}$ from Lemma~\ref{enrichment_dG}.   The definition of $\tilde{\cA}_{\dg}(\bullet,\bullet)$, the symmetry of $B_{\dg}(\bullet,\bullet,\bullet)$ in first and second variables, \eqref{defnmu_dG}, and \eqref{vform_cts_dG} lead to 
	\begin{align}
	&\tilde\cA_{\dg}(\mu(\Theta_{\dg})-\Pi_h\Psi, \Phi_{\dg})
	=\tilde\cA_{\dg}(\mu(\Theta_{\dg}), \Phi_{\dg})-\tilde\cA_{\dg}(\Pi_h\Psi, \Phi_{\dg})\notag\\
	&\quad=L_{\dg}(\Phi_{\dg})+2B_{\dg}(\Pi_h\Psi,\Theta_{\dg},\Phi_{\dg})-B_{\dg}(\Theta_{\dg},\Theta_{\dg},\Phi_{\dg})-A_{\dg}(\Pi_h\Psi,\Phi_{\dg})\nonumber \\
	& \qquad \; -2B_{\dg}(\Pi_h\Psi,\Pi_h\Psi,\Phi_{\dg})\notag\\
	&\quad=L_{\dg}(\Phi_{\dg}-E_h\Phi_{\dg}) +\left(A(\Psi, E_h\Phi_{\dg})-A_{\dg}(\Pi_h\Psi,\Phi_{\dg})\right) \notag \\
	& \qquad \; +\left(B(\Psi,\Psi, E_h\Phi_{\dg})-B_{\dg}(\Pi_h\Psi,\Pi_h\Psi,\Phi_{\dg})\right)
	+B_{\dg}(\Pi_h\Psi-\Theta_{\dg},\Theta_{\dg}-\Pi_h\Psi,\Phi_{\dg})\notag \\ & \quad =:T_1+T_2+T_3+T_4. \label{Aht_Est}
	\end{align}
	The term $T_1$ can be estimated using the continuity of $L_{\dg}$ and Lemma~\ref{enrichment_dG}.
	{The continuity of $A_{\dg}(\bullet, \bullet)$, Lemma~\ref{intermediate} and \ref{interpolant_dG} with $\trinl\Phi_{\dg} \trinr_{\dg}=1$  lead to}
	\begin{align*}
	T_2:&=A(\Psi, E_h\Phi_{\dg})-A_{\dg}(\Pi_h\Psi,\Phi_{\dg})\\
	 & = A_{\dg}(\Psi,E_h\Phi_{\dg}-\Phi_{\dg})
	+A_{\dg}(\Psi-\Pi_h\Psi,\Phi_{\dg}) \lesssim h^\alpha \trinl\Psi\trinr_{2+\alpha}.
	\end{align*}
	Lemma~\ref{b_dG_1}, \ref{enrichment_dG}, and \ref{interpolant_dG} result in 
	\begin{align*}
	T_3&:= B(\Psi,\Psi,E_h\Phi_{\dg})-B_{\dg}(\Pi_h\Psi,\Pi_h\Psi,\Phi_{\dg})\\
	&= B_{\dg}(\Psi,\Psi-\Pi_h\Psi,E_h\Phi_{\dg})+B_{\dg}(\Psi-\Pi_h\Psi,\Pi_h\Psi,\Phi_{\dg})
	\\
	&\quad+B_{\dg}(\Psi,\Pi_h\Psi,E_h\Phi_{\dg}-\Phi_{\dg}) \lesssim h^\alpha\trinl\Psi\trinr_{2+\alpha}\trinl\Psi\trinr_{2}.
	\end{align*} 
	Lemma~\ref{b_dG_1} implies
	\begin{align*}
	T_4:=B_{\dg}(\Pi_h\Psi-\Theta_{\dg},\Theta_{\dg}-\Pi_h\Psi,\Phi_{\dg})\lesssim \trinl\Theta_{\dg}-\Pi_h\Psi\trinr_{\dg}^2.
	\end{align*}
	A substitution of the estimates for $T_1, T_2, T_3$, and $T_4$ in \eqref{Aht_Est} and $\trinl\Psi\trinr_{2+\alpha}\approx 1\approx \trinl\Psi\trinr_2$ lead to $C_1\approx 1$ with
	\begin{align}\label{muestimate_dG}
	&\trinl\mu(\Theta_{\dg})-\Pi_h\Psi\trinr_{\dg}\leq C_1\left( h^{\alpha}+\trinl\Theta_{\dg}-\Pi_h\Psi\trinr_{\dg}^2\right).
	\end{align}
	Then $h\leq h_2:=\left(2C_1\right)^{-2/\alpha}$ and $\trinl \Theta_{\dg}-\Pi_h\Psi\trinr_{\dg}\leq R(h):=2C_1 h^\alpha$ lead to
	\begin{align*}
	&\trinl\mu(\Theta_{\dg})-\Pi_h\Psi\trinr_{\dg}\leq C_1 h^\alpha\left(1+4C_1^2 h^\alpha\right)\leq R(h).
	\end{align*}
	This concludes the proof.
\end{proof}

\begin{thm}[Existence and uniqueness]\label{contractionthm}
For sufficiently large $\sigma_2$ and sufficiently small $h$, there exists a unique solution $\Psi_{\dg}$ to the discrete problem \eqref{vform_d_dG} in $\bB_{R(h)}(\Pi_h\Psi)$.
\end{thm}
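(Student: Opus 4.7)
The plan is to apply the Banach fixed point theorem to the map $\mu$ introduced in \eqref{defnmu_dG}. By Theorem~\ref{mapball2ball}, $\mu$ already maps $\bB_{R(h)}(\Pi_h\Psi)$ into itself, so it remains to establish that $\mu$ is a strict contraction on this ball for sufficiently small $h$. The unique fixed point of $\mu$ in $\bB_{R(h)}(\Pi_h\Psi)$ will then be the unique discrete solution $\Psi_{\dg}$ of \eqref{vform_d_dG} in the ball.

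Given $\Theta^1_{\dg},\Theta^2_{\dg}\in \bB_{R(h)}(\Pi_h\Psi)$, the first step is to invoke the discrete inf-sup condition \eqref{dis_inf_sup_dG_perturb} of $\tilde{\cA}_{\dg}$ from Lemma~\ref{regular_dG}: there exists $\Phi_{\dg}\in\Stb$ with $\trinl\Phi_{\dg}\trinr_{\dg}=1$ such that
\[
\tfrac{\widehat{\beta}}{2}\,\trinl\mu(\Theta^1_{\dg})-\mu(\Theta^2_{\dg})\trinr_{\dg}\leq \tilde{\cA}_{\dg}\bigl(\mu(\Theta^1_{\dg})-\mu(\Theta^2_{\dg}),\Phi_{\dg}\bigr).
\]
Subtracting the defining identities \eqref{defnmu_dG} for $\mu(\Theta^1_{\dg})$ and $\mu(\Theta^2_{\dg})$, the right-hand side reduces to
\[
2B_{\dg}(\Pi_h\Psi,\Theta^1_{\dg}-\Theta^2_{\dg},\Phi_{\dg})-\bigl(B_{\dg}(\Theta^1_{\dg},\Theta^1_{\dg},\Phi_{\dg})-B_{\dg}(\Theta^2_{\dg},\Theta^2_{\dg},\Phi_{\dg})\bigr).
\]
The second key step is the algebraic identity obtained from the symmetry of $B_{\dg}(\bullet,\bullet,\bullet)$ in the first two arguments, namely
\[
B_{\dg}(\Theta^1_{\dg},\Theta^1_{\dg},\Phi_{\dg})-B_{\dg}(\Theta^2_{\dg},\Theta^2_{\dg},\Phi_{\dg})=B_{\dg}(\Theta^1_{\dg}-\Theta^2_{\dg},\Theta^1_{\dg}+\Theta^2_{\dg},\Phi_{\dg}).
\]
Combining the two displays yields the compact formula
\[
\tilde{\cA}_{\dg}\bigl(\mu(\Theta^1_{\dg})-\mu(\Theta^2_{\dg}),\Phi_{\dg}\bigr)=B_{\dg}\bigl(\Theta^1_{\dg}-\Theta^2_{\dg},\,2\Pi_h\Psi-\Theta^1_{\dg}-\Theta^2_{\dg},\,\Phi_{\dg}\bigr),
\]
in which the middle argument is $(\Pi_h\Psi-\Theta^1_{\dg})+(\Pi_h\Psi-\Theta^2_{\dg})$.

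The third step applies the boundedness of $B_{\dg}$ from Lemma~\ref{b_dG_1}.a together with the ball condition $\trinl\Pi_h\Psi-\Theta^j_{\dg}\trinr_{\dg}\leq R(h)=2C_1 h^\alpha$ for $j=1,2$. With $\trinl\Phi_{\dg}\trinr_{\dg}=1$ this produces, for some constant $C_2\approx 1$,
\[
\trinl \mu(\Theta^1_{\dg})-\mu(\Theta^2_{\dg})\trinr_{\dg}\leq C_2\, R(h)\,\trinl \Theta^1_{\dg}-\Theta^2_{\dg}\trinr_{\dg}= 2C_1 C_2\, h^\alpha\, \trinl \Theta^1_{\dg}-\Theta^2_{\dg}\trinr_{\dg}.
\]
Choosing $h$ so small (in addition to the previous thresholds $h_0,h_1,h_2$) that $2C_1 C_2\, h^\alpha\leq \tfrac12$ makes $\mu$ a contraction on $\bB_{R(h)}(\Pi_h\Psi)$. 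Since the ball is a complete metric space, Banach's fixed point theorem provides a unique fixed point $\Psi_{\dg}\in\bB_{R(h)}(\Pi_h\Psi)$, which by construction of $\mu$ is the unique discrete solution of \eqref{vform_d_dG} in this ball.

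The only slightly delicate point is the contraction estimate: one must exploit the symmetry of $B_{\dg}$ to rewrite the difference of the two quadratic nonlinear terms as a \emph{bilinear} expression in $\Theta^1_{\dg}-\Theta^2_{\dg}$ and $2\Pi_h\Psi-\Theta^1_{\dg}-\Theta^2_{\dg}$; without this rewriting one cannot extract a small factor of order $R(h)$ on the right-hand side. Everything else is a direct application of the already established inf-sup, boundedness, and interpolation results.
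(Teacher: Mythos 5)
Your proposal is correct and follows essentially the same route as the paper: the inf-sup condition for $\tilde{\cA}_{\dg}$ from Lemma~\ref{regular_dG}, the symmetry of $B_{\dg}$ in its first two arguments to rewrite the difference of the quadratic terms as a bilinear expression with a factor of size $R(h)=2C_1h^\alpha$, and then the Banach fixed point theorem on the ball $\bB_{R(h)}(\Pi_h\Psi)$. The only cosmetic difference is that you collapse the right-hand side into the single term $B_{\dg}(\Theta^1_{\dg}-\Theta^2_{\dg},\,2\Pi_h\Psi-\Theta^1_{\dg}-\Theta^2_{\dg},\,\Phi_{\dg})$ before splitting, whereas the paper writes it directly as a sum of two trilinear terms; the estimates are identical.
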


\begin{proof}

	First we prove the contraction result of the nonlinear map $\mu$ in the ball $\bB_{R(h)}(\Pi_h\Psi)$ of Theorem~\ref{mapball2ball}. Given any $\Theta_{\dg},\tilde{\Theta}_{\dg}\in  \bB_{R(h)}(\Pi_h\Psi)$ and for all $\Phi_{\dg}\in\Stb$, the solutions $\mu(\Theta_{\dg})$ and  $\mu(\tilde{\Theta}_{\dg})$ satisfy
	\begin{align}
	\tilde\cA_{\dg}(\mu(\Theta_{\dg}),\Phi_{\dg})&=L_{\dg}(\Phi_{\dg})+2B_{\dg}(\Pi_h\Psi,\Theta_{\dg},\Phi_{\dg})-B_{\dg}(\Theta_{\dg},\Theta_{\dg},\Phi_{\dg}), \label{mueqn1_dG1} \\
	\tilde\cA_{\dg}(\mu(\tilde{\Theta}_{\dg}),\Phi_{\dg})&=L_{\dg}(\Phi_{\dg})+2B_{\dg}(\Pi_h\Psi,\tilde{\Theta}_{\dg},\Phi_{\dg})-B_{\dg}(\tilde{\Theta}_{\dg},\tilde{\Theta}_{\dg},\Phi_{\dg}). \label{mueqn1_dG2}
	\end{align}
	\noindent 	The discrete inf-sup of $\tilde\cA_{\dg}(\bullet,\bullet)$ from Lemma~\ref{regular_dG} guarantees the existence of $\Phi_{\dg}$ with $\trinl \Phi_{\dg}\trinr_{\dg}=1$ below.  With \eqref{mueqn1_dG1}-\eqref{mueqn1_dG2} and Lemma~\ref{b_dG_1}, it follows that
	\begin{align*}
	&\frac{\widehat{\beta}}{2}\trinl\mu(\Theta_{\dg})-\mu(\tilde{\Theta}_{\dg})\trinr_{\dg}\leq \tilde\cA_{\dg}(\mu(\Theta_{\dg})-\mu(\tilde{\Theta}_{\dg}),\Phi_{\dg})\notag\\
	&=2B_{\dg}(\Pi_h\Psi,\Theta_{\dg}-\tilde{\Theta}_{\dg}, \Phi_{\dg})+B_{\dg}(\tilde{\Theta}_{\dg},\tilde{\Theta}_{\dg}, \Phi_{\dg})-B_{\dg}(\Theta_{\dg},\Theta_{\dg}, \Phi_{\dg})\\
	&=B_{\dg}(\tilde{\Theta}_{\dg}-\Theta_{\dg},\Theta_{\dg}-\Pi_h\Psi,\Phi_{\dg})+B_{\dg}(\tilde{\Theta}_{\dg}-\Pi_h\Psi,\tilde{\Theta}_{\dg}-\Theta_{\dg},\Phi_{\dg})\\
	&\lesssim\trinl\tilde{\Theta}_{\dg}-\Theta_{\dg}\trinr_{\dg}\left(\trinl\Theta_{\dg}-\Pi_h\Psi\trinr_{\dg}+
	\trinl\tilde{\Theta}_{\dg}-\Pi_h\Psi\trinr_{\dg}\right).
	\end{align*}
	Since $\Theta_{\dg},\tilde{\Theta}_{\dg}\in  \bB_{R(h)}(\Pi_h\Psi)$, for a choice of $R(h)$ as in the proof of  Theorem \ref{mapball2ball}, for sufficiently large $\sigma_2$ and $h\leq \min\{h_0,h_1,h_2\}$, 
	\begin{equation*}
	\trinl\mu(\Theta_{\dg})-\mu(\tilde{\Theta}_{\dg})\trinr_{\dg}\lesssim h^{\alpha} \trinl\tilde{\Theta}_{\dg}-\Theta_{\dg}\trinr_{\dg}.
	\end{equation*}
	Hence, there exists positive constant $h_3$, such that for $h\leq h_3$ the contraction result holds.
	
	\medskip
	
	For $h\leq \ubar{h}:=\min\{h_0,h_1,h_2,h_3\}$, Lemma~\ref{mapball2ball} and Theorem~\ref{contractionthm} lead to the fact that $\mu$ is a contraction map that maps the ball $\bB_{R(h)}(\Pi_h\Psi)$ into itself. {An application of the Banach fixed point theorem} yields that the mapping $\mu$ has a unique fixed point in the ball $\bB_{R(h)}(\Pi_h\Psi)$, say $\Psi_{\dg}$, which solves \eqref{vform_d_dG} with $\trinl\Psi_{\dg}-\Pi_h\Psi\trinr_{\dg}\leq R(h)$.
\end{proof}

\begin{thm}[Energy norm estimate]\label{eetimate}
	Let  $\Psi$ be a regular solution to \eqref{vform_cts_dG}  and let $\Psi_{\dg}$ be the solution to \eqref{vform_d_dG}. For sufficiently large $\sigma_2$ and sufficiently small $h$, it holds
	\begin{equation*}
	\trinl\Psi-\Psi_{\dg}\trinr_{\dg}\leq C h^{\alpha}.
	\end{equation*}
\end{thm}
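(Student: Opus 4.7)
The plan is to reduce the energy error to the two pieces that the previous sections have already controlled, namely the interpolation error $\trinl\Psi-\Pi_h\Psi\trinr_{\dg}$ and the distance $\trinl\Pi_h\Psi-\Psi_{\dg}\trinr_{\dg}$ of the discrete solution to the interpolant. A triangle inequality
\[
\trinl\Psi-\Psi_{\dg}\trinr_{\dg}\leq \trinl\Psi-\Pi_h\Psi\trinr_{\dg}+\trinl\Pi_h\Psi-\Psi_{\dg}\trinr_{\dg}
\]
is the natural first step; since $\Psi\in\bH^{2+\alpha}(\Omega)\cap\bX$ for the elliptic regularity index $\alpha\in(1/2,1]$, the interpolation estimate \eqref{interpolation_est} of Lemma~\ref{interpolant_dG} applied componentwise yields directly $\trinl\Psi-\Pi_h\Psi\trinr_{\dg}\lesssim h^{\alpha}\trinl\Psi\trinr_{2+\alpha}$.

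For the second term, I would invoke Theorem~\ref{contractionthm}: under the hypothesis that $\sigma_2$ is sufficiently large and $h$ sufficiently small (so that $h\leq \ubar{h}=\min\{h_0,h_1,h_2,h_3\}$), the Banach fixed point argument produced the unique discrete solution $\Psi_{\dg}\in\bB_{R(h)}(\Pi_h\Psi)$ with explicit radius $R(h)=2C_1 h^{\alpha}$, so that
\[
\trinl\Pi_h\Psi-\Psi_{\dg}\trinr_{\dg}\leq R(h)=2C_1 h^{\alpha}.
\]
This is the heart of the $h^{\alpha}$ bound and has already been done; at this stage of the paper no extra work is required.

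Combining the two displayed bounds gives $\trinl\Psi-\Psi_{\dg}\trinr_{\dg}\leq (2C_1+C_{\rm int}\trinl\Psi\trinr_{2+\alpha})h^{\alpha}$, which yields the desired constant $C\approx 1$ depending on $\sigma_2$ through $\beta_0$ and on $\Psi$ through $\trinl\Psi\trinr_{2+\alpha}$. Because all the hard estimates — the discrete inf-sup for the perturbed form (Lemma~\ref{regular_dG}), the consistency-type estimate through the enrichment operator (Lemma~\ref{intermediate}), and the quadratic contraction estimate \eqref{muestimate_dG} — are already in place, the only obstacle is bookkeeping the smallness conditions on $h$ and $\sigma_2$ inherited from Theorems~\ref{dis_inf_sup_thm} and \ref{contractionthm}; nothing new has to be derived.
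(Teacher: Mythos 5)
Your argument coincides with the paper's proof: the same triangle inequality splitting into $\trinl\Psi-\Pi_h\Psi\trinr_{\dg}$ and $\trinl\Pi_h\Psi-\Psi_{\dg}\trinr_{\dg}$, the interpolation estimate of Lemma~\ref{interpolant_dG} for the first term, and the fixed-point localization $\Psi_{\dg}\in\bB_{R(h)}(\Pi_h\Psi)$ with $R(h)=2C_1h^{\alpha}$ from Theorem~\ref{contractionthm} for the second. The proof is correct and requires no further comment.
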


\begin{proof}
	A triangle inequality yields
	\begin{equation}\label{newnn_dG}
	\trinl\Psi-\Psi_{\dg}\trinr_{\dg}\leq \trinl\Psi-\Pi_h\Psi\trinr_{\dg}+\trinl\Pi_h\Psi-\Psi_{\dg}\trinr_{\dg}.
	\end{equation}
	For $h\leq \ubar{h}$ and sufficiently large $\sigma_2$, Theorem~\ref{contractionthm} leads to
	\begin{equation}\label{pihsoln}
	\trinl\Pi_h\Psi-\Psi_{\dg}\trinr_{\dg}\leq Ch^\alpha.
	\end{equation}
	This, Lemma~\ref{interpolant_dG}, \eqref{pihsoln},  and \eqref{newnn_dG} conclude the proof.
\end{proof}

\subsection{Convergence of the Newton method}

The discrete solution $\Psi_{\dg}$ of \eqref{vform_d_dG} is characterized as the fixed point of \eqref{defnmu_dG} and so depends on the unknown $\Pi_h\Psi$. The approximate solution to \eqref{vform_d_dG} is computed with the Newton method, where the iterates $\Psi_{\dg}^{j}$ solve 
\begin{equation}\label{NewtonIterate_dG}
A_{\dg}(\Psi_{\dg}^{j},\Phi_{\dg})+2B_{\dg}(\Psi_{\dg}^{j-1},\Psi_{\dg}^{j},\Phi_{\dg})=B_{\dg}(\Psi_{\dg}^{j-1},\Psi_{\dg}^{j-1},\Phi_{\dg})+L_{\dg}(\Phi_{\dg})
\end{equation} 
for all $\Phi_{\dg}\in \Stb$. The Newton method has locally quadratic convergence.
\begin{thm}[Convergence of Newton method]\label{NewtonThm}
	Let $\Psi$ be a regular solution to \eqref{vform_cts_dG} and let  $\Psi_{\dg}$ solve \eqref{vform_d_dG}. There exists a positive constant $R$ independent of $h$, such that for any initial guess $\Psi_{\dg}^0$ with
	$\displaystyle \trinlNH \Psi_{\dg}- \Psi_{\dg}^0\trinrNH_{\dg}\leq  R$, it follows $\trinlNH\Psi_{\dg}- \Psi_{\dg}^{j}\trinrNH_{\dg}  \leq R\fl j=0,1,2,\ldots$ and the iterates of the Newton method  in \eqref{NewtonIterate_dG} are well defined and converges quadratically to $\Psi_{\dg}$.
\end{thm}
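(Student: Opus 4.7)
The plan is to derive the standard Newton error equation, verify a perturbed discrete inf-sup condition for the linearization at the current iterate, and then close an induction on $j$ that delivers well-posedness, the stay-in-ball property, and quadratic contraction simultaneously.

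\emph{Step 1 (error equation).} First I would subtract the Newton step \eqref{NewtonIterate_dG} from the rewriting $A_{\dg}(\Psi_{\dg},\Phi_{\dg})+2B_{\dg}(\Psi_{\dg},\Psi_{\dg},\Phi_{\dg})=L_{\dg}(\Phi_{\dg})+B_{\dg}(\Psi_{\dg},\Psi_{\dg},\Phi_{\dg})$ of \eqref{vform_d_dG}. Setting $e^{j}:=\Psi_{\dg}-\Psi_{\dg}^{j}$ and exploiting the symmetry of $B_{\dg}(\bullet,\bullet,\bullet)$ in the first two arguments, a few algebraic rearrangements give
\begin{equation*}
A_{\dg}(e^{j},\Phi_{\dg})+2B_{\dg}(\Psi_{\dg}^{j-1},e^{j},\Phi_{\dg})=-B_{\dg}(e^{j-1},e^{j-1},\Phi_{\dg})\quad\text{for all }\Phi_{\dg}\in\Stb.
\end{equation*}

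\emph{Step 2 (perturbed inf-sup at the current iterate).} Lemma~\ref{regular_dG} supplies a discrete inf-sup constant $\widehat{\beta}/2$ for $\tilde{\cA}_{\dg}(\bullet,\bullet)=A_{\dg}(\bullet,\bullet)+2B_{\dg}(\Pi_h\Psi,\bullet,\bullet)$. Replacing $\Pi_h\Psi$ by $\Psi_{\dg}^{j-1}$ changes the bilinear form by at most $2C_{b}\trinl\Pi_h\Psi-\Psi_{\dg}^{j-1}\trinr_{\dg}\trinl\Theta_{\dg}\trinr_{\dg}\trinl\Phi_{\dg}\trinr_{\dg}$ by Lemma~\ref{b_dG_1}(a). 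A triangle inequality and Theorem~\ref{contractionthm} yield
\begin{equation*}
\trinl\Pi_h\Psi-\Psi_{\dg}^{j-1}\trinr_{\dg}\le\trinl\Pi_h\Psi-\Psi_{\dg}\trinr_{\dg}+\trinl e^{j-1}\trinr_{\dg}\lesssim h^{\alpha}+\trinl e^{j-1}\trinr_{\dg}.
\end{equation*}
Hence, for sufficiently small $h$ and sufficiently small $R>0$, the Newton bilinear form $A_{\dg}(\bullet,\bullet)+2B_{\dg}(\Psi_{\dg}^{j-1},\bullet,\bullet)$ inherits a discrete inf-sup constant at least $\widehat{\beta}/4$, uniformly over all $\Psi_{\dg}^{j-1}$ with $\trinl e^{j-1}\trinr_{\dg}\le R$; in particular, every Newton step is well posed.

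\emph{Step 3 (induction).} I would then fix $C:=4C_b/\widehat{\beta}$ and choose $R>0$ so small (independently of $h$) that $CR\le 1/2$ holds and the perturbation bound of Step~2 is secured. Assuming the induction hypothesis $\trinl e^{j-1}\trinr_{\dg}\le R$, testing the error equation from Step~1 against an optimal $\Phi_{\dg}$ in the perturbed inf-sup of Step~2 and using the boundedness of $B_{\dg}$ gives
\begin{equation*}
\tfrac{\widehat{\beta}}{4}\trinl e^{j}\trinr_{\dg}\le\sup_{\trinl\Phi_{\dg}\trinr_{\dg}=1}\bigl|B_{\dg}(e^{j-1},e^{j-1},\Phi_{\dg})\bigr|\le C_{b}\trinl e^{j-1}\trinr_{\dg}^{2},
\end{equation*}
so $\trinl e^{j}\trinr_{\dg}\le C\trinl e^{j-1}\trinr_{\dg}^{2}\le CR^{2}\le R/2\le R$. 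This closes the induction and gives the quadratic estimate $C\trinl e^{j}\trinr_{\dg}\le\bigl(C\trinl e^{j-1}\trinr_{\dg}\bigr)^{2}$, hence $C\trinl e^{j}\trinr_{\dg}\le(CR)^{2^{j}}\to 0$.

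\emph{Main obstacle.} The essential subtlety is that the Newton linearization is about the current iterate $\Psi_{\dg}^{j-1}$ rather than about the fixed $\Pi_h\Psi$ for which Lemma~\ref{regular_dG} provides the discrete inf-sup condition; ensuring that the inf-sup constant degrades by only a bounded factor uniformly in $j$ is what makes the argument go through. This is achieved by combining the $h$-independent smallness of $R$ (controlling $\trinl e^{j-1}\trinr_{\dg}$) with the a~priori bound $\trinl\Pi_h\Psi-\Psi_{\dg}\trinr_{\dg}\lesssim h^{\alpha}$ from Theorem~\ref{contractionthm} (controlling the $h$-part of the perturbation). Once this uniform inf-sup is in place, the rest is the textbook quadratic-convergence argument.
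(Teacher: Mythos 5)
Your proposal is correct and follows essentially the same route as the paper: the same Newton error equation $A_{\dg}(e^{j},\Phi_{\dg})+2B_{\dg}(\Psi_{\dg}^{j-1},e^{j},\Phi_{\dg})=-B_{\dg}(e^{j-1},e^{j-1},\Phi_{\dg})$, the same perturbation of the discrete inf-sup condition of Lemma~\ref{regular_dG} from $\Pi_h\Psi$ to the current iterate $\Psi_{\dg}^{j-1}$ (controlled by $\trinl\Pi_h\Psi-\Psi_{\dg}\trinr_{\dg}\lesssim h^{\alpha}$ plus the $h$-independent radius $R$), and the same induction with $R$ chosen so that $4C_{\tilde b}R/\widehat{\beta}\le 1/2$. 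The only difference is cosmetic: you spell out the perturbation bound that the paper compresses into the phrase ``following the proof of Lemma~\ref{regular_dG}.''
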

\begin{proof}
	Following the proof of Lemma~\ref{regular_dG}, there exists a positive constant $\epsilon$ (sufficiently small) independent of $h$ such that for each $Z_{\dg}\in\Stb$ with $\trinlNH Z_{\dg}-\Pi_h\Psi\trinrNH_{\dg}\leq \epsilon$, the bilinear form
	\begin{equation}\label{NewtonNonsingular}
	A_{\dg}(\bullet,\bullet)+2B_{\dg}(Z_{\dg},\bullet,\bullet)
	\end{equation}
	\smallskip
	satisfies discrete inf-sup condition in $\Stb\times\Stb$.
	 For sufficiently large $\sigma_2$ and sufficiently small $h$, the equation \eqref{pihsoln} implies $\trinlNH\Pi_h\Psi-\Psi_{\dg}\trinrNH_{\dg}\leq Ch^\alpha$. Thus $h$ can be chosen sufficiently small so that $\trinlNH\Pi_h\Psi-\Psi_{\dg}\trinrNH_{\dg}\leq \epsilon/2$. Recall $\widehat{\beta}$ from \eqref{dis_inf_sup_dG}. Lemma \ref{b_dG_1}.a implies that there exists a positive constant $C_{\tilde{b}}$ independent of $h$ such that  $B_{\dg}(\Xi_{\dg},\Theta_{\dg},\Phi_{\dg})\leq C_{\tilde{b}}\trinl\Xi_{\dg}\trinr_{\dg}\trinl\Theta_{\dg}\trinr_{\dg}\trinl\Phi_{\dg}\trinr_{\dg}$. Set
	\begin{equation*}
	 R:=\min\left\{\epsilon/2,\widehat{\beta}/8C_{\tilde{b}}\right\}.
	\end{equation*}
	Assume that the initial guess $\Psi_{\dg}^0$ satisfies $\trinlNH\Psi_{\dg}-\Psi_{\dg}^0\trinrNH_{\dg}\leq  R$. Then,
	\begin{equation*}
	\trinlNH\Pi_h\Psi-\Psi_{\dg}^0\trinrNH_{\dg}\leq \trinlNH\Pi_h\Psi-\Psi_{\dg}\trinrNH_{\dg}+\trinlNH\Psi_{\dg}-\Psi_{\dg}^0\trinrNH_{\dg}\leq \epsilon.
	\end{equation*}
	This implies $\trinlNH\Psi_{\dg}-\Psi_{\dg}^{j-1}\trinrNH_{\dg}\leq  R$ for $j=1$ and suppose for mathematical induction that this holds for some $j\in\bN$. Then $Z_{\dg}:=\Psi_{\dg}^{j-1}$ in \eqref{NewtonNonsingular} leads to an discrete inf-sup condition of $A_{\dg}(\bullet,\bullet)+2B_{\dg}(\Psi^{j-1}_{\dg},\bullet,\bullet)$ and so to an unique solution $\Psi_{\dg}^j$ in step $j$ of the Newton scheme. The discrete inf-sup condition \eqref{NewtonNonsingular} implies the existence of $ \Phi_{\dg}\in\Stb$ with $\trinlNH \Phi_{\dg}\trinrNH_{\dg}=1$ and
	\begin{equation*}
	\frac{\widehat{\beta}}{4}\trinlNH\Psi_{\dg}-\Psi_{\dg}^{j}\trinrNH_{\dg}\leq A_{\dg}(\Psi_{\dg}-\Psi_{\dg}^{j}, \Phi_{\dg})+2B_{\dg}(\Psi_{\dg}^{j-1},\Psi_{\dg}-\Psi_{\dg}^{j}, \Phi_{\dg}).
	\end{equation*}
	The application of \eqref{NewtonIterate_dG}, \eqref{vform_d_dG}, and Lemma~\ref{b_dG_1} result in 
	\begin{align}
	&A_{\dg}(\Psi_{\dg}-\Psi_{\dg}^{j}, \Phi_{\dg})+2B_{\dg}(\Psi_{\dg}^{j-1},\Psi_{\dg}-\Psi_{\dg}^{j}, \Phi_{\dg})\notag\\
	&=A_{\dg}(\Psi_{\dg},  \Phi_{\dg})+2B_{\dg}(\Psi_{\dg}^{j-1},\Psi_{\dg},  \Phi_{\dg})-B_{\dg}(\Psi_{\dg}^{j-1},\Psi_{\dg}^{j-1},  \Phi_{\dg})-L_{\dg}(  \Phi_{\dg})\notag\\
	&=-B_{\dg}(\Psi_{\dg},\Psi_{\dg},\Phi_{\dg})+2B_{\dg}(\Psi_{\dg}^{j-1},\Psi_{\dg},  \Phi_{\dg})-B_{\dg}(\Psi_{\dg}^{j-1},\Psi_{\dg}^{j-1},  \Phi_{\dg})\notag\\
	&=B_{\dg}(\Psi_{\dg}-\Psi_{\dg}^{j-1},\Psi_{\dg}^{j-1}-\Psi_{\dg},  \Phi_{\dg})\leq C_{\tilde{b}}\trinlNH \Psi_{\dg}-\Psi_{\dg}^{j-1}\trinrNH_{\dg}^2.\notag
	\end{align}
	This implies 
	\begin{equation}\label{induction0}
	\trinlNH\Psi_{\dg}-\Psi_{\dg}^{j}\trinrNH_{\dg}\leq \left(4C_{\tilde{b}}/\widehat{\beta}\right)\trinlNH\Psi_{\dg}-\Psi_{\dg}^{j-1}\trinrNH_{\dg}^2
	\end{equation}
	and establishes the quadratic convergence of the Newton method  to $\Psi_{\dg}$. The definition of $R$ and \eqref{induction0} guarantee $\trinlNH\Psi_{\dg}-\Psi_{\dg}^{j}\trinrNH_{\dg}\leq \half \trinlNH\Psi_{\dg}-\Psi_{\dg}^{j-1}\trinrNH_{\dg}<R$ to allow an induction step $j\to j+1$ to conclude the proof.
\end{proof}

\section[A posteriori error control]{{\it A posteriori} error control}

This section establishes a reliable and efficient error estimator for the DGFEM. For $K \in \cT$ and $E \in \cE(\Omega)$, define the volume and edge estimators $\eta_K$ and $\eta_E$ by 
\begin{align*}
\eta_K^2&:= h_K^4\Big{(}\|f+[u_{\dg},v_{\dg}]\|_{L^2(K)}^2+\|[u_{\dg},u_{\dg}]\|_{L^2(K)}^2\Big{)},\\
\eta_E^2&:={h_E\left(\left\|[D^2 u_{\dg}\,\nu_E]_E\right\|_{L^2(E)}^2+\left\|[ D^2 v_{\dg}\nu_E]_E\right\|_{L^2(E)}^2\right)}\\
&\quad {+h_E^{-3}\left(\|[u_{\dg}]_E\|_{L^2(E)}^2+\|[v_{\dg}]_E\|_{L^2(E)}^2\right)+h_E^{-1}\left(\|[\nabla u_{\dg}]_E\|_{L^2(E)}^2+\|[\nabla v_{\dg}]_E\|_{L^2(E)}^2\right).}
\end{align*}

\begin{thm}[Reliability]\label{reliability_dG} Let $\Psi=(u,v)\in\bX$ be a regular solution to \eqref{vform_cts_dG} and let $\Psi_{\dg}=(u_{\dg},v_{\dg})\in\Stb$ solve \eqref{vform_d_dG}. For sufficiently large $\sigma_2$ and sufficiently small $h$, there exists an $h$-independent positive constant $C_{\rm rel}$ such that
	\begin{align}
	\trinl\Psi-\Psi_{\dg}\trinr_{\dg}^2&\leq C_{\rm rel}^2\left(\sum_{K\in\cT}\eta_K^2+\sum_{E\in\cE(\Omega)}\eta_E^2\right). \label{reliability_Est_dG}
	\end{align}
\end{thm}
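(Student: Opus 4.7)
The plan is a residual-type a posteriori analysis performed at a conforming approximation of $\Psi_{\dg}$, obtained with the enrichment operator $E_h$ of Lemma~\ref{dG_Enrich_apost}. A triangle inequality gives
\begin{equation*}
\trinl\Psi - \Psi_{\dg}\trinr_{\dg} \leq \trinl\Psi - E_h\Psi_{\dg}\trinr_2 + \trinl E_h\Psi_{\dg} - \Psi_{\dg}\trinr_{\dg},
\end{equation*}
and the second summand is controlled directly by Lemma~\ref{dG_Enrich_apost} by the jump contributions $h_E^{-3}\|[\Psi_{\dg}]_E\|_{L^2(E)}^2+h_E^{-1}\|[\nabla\Psi_{\dg}]_E\|_{L^2(E)}^2$ to $\sum_{E\in\cE(\Omega)}\eta_E^2$.

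Since $\Psi - E_h\Psi_{\dg}\in\bX$, I would apply the continuous inf-sup condition \eqref{inf-sup_dG} from the regularity of $\Psi$ to select $\Phi\in\bX$ with $\trinl\Phi\trinr_2=1$ and
\begin{equation*}
\beta\trinl\Psi - E_h\Psi_{\dg}\trinr_2 \leq A(\Psi - E_h\Psi_{\dg},\Phi) + 2B(\Psi, \Psi - E_h\Psi_{\dg},\Phi).
\end{equation*}
Subtracting $N(\Psi;\Phi)=0$ and using the symmetry of $B$ in the first two variables—i.e.\ the algebraic identity $B(\Psi,\Psi,\Phi)-2B(\Psi,E_h\Psi_{\dg},\Phi) = B(\Psi-E_h\Psi_{\dg},\Psi-E_h\Psi_{\dg},\Phi) - B(E_h\Psi_{\dg},E_h\Psi_{\dg},\Phi)$—rewrites the right-hand side as $-N(E_h\Psi_{\dg};\Phi)+B(\Psi-E_h\Psi_{\dg},\Psi-E_h\Psi_{\dg},\Phi)$. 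The quadratic correction is bounded by Lemma~\ref{b_dG_1}.a together with the a priori estimate of Theorem~\ref{eetimate}, which delivers $\trinl\Psi-E_h\Psi_{\dg}\trinr_2\lesssim h^\alpha$, and is therefore absorbed on the left for sufficiently small $h$. Only the residual $|N(E_h\Psi_{\dg};\Phi)|$ remains to be estimated by the indicators.

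To estimate $N(E_h\Psi_{\dg};\Phi)$, I would insert the discrete Galerkin orthogonality $N_h(\Psi_{\dg};\Pi_h\Phi)=0$ for the interpolation $\Pi_h\Phi\in\Stb$ of Lemma~\ref{interpolant_dG} and decompose
\begin{equation*}
N(E_h\Psi_{\dg};\Phi) = \bigl[A(E_h\Psi_{\dg},\Phi) - A_{\dg}(\Psi_{\dg},\Pi_h\Phi)\bigr] + \bigl[B(E_h\Psi_{\dg},E_h\Psi_{\dg},\Phi) - B_{\dg}(\Psi_{\dg},\Psi_{\dg},\Pi_h\Phi)\bigr] - \bigl[L(\Phi) - L_{\dg}(\Pi_h\Phi)\bigr].
\end{equation*}
Each bracket is split by adding and subtracting an intermediate $E_h$-level (or $\Psi_{\dg}$-level) form and then evaluated element-by-element. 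Integration by parts on $\sum_K\int_K D^2\Psi_{\dg}:D^2(\Phi-\Pi_h\Phi)$ produces the normal edge jumps $h_E\|[D^2u_{\dg}\nu_E]_E\|_{L^2(E)}^2$ and $h_E\|[D^2v_{\dg}\nu_E]_E\|_{L^2(E)}^2$ in $\eta_E$, while the analogous step on the nonlinear bracket produces the volume residuals $f+[u_{\dg},v_{\dg}]$ and $-\tfrac12[u_{\dg},u_{\dg}]$ with weights $h_K^2$ in $\eta_K$. The enrichment difference $E_h\Psi_{\dg}-\Psi_{\dg}$ appearing after the split is handled once more by Lemma~\ref{dG_Enrich_apost}, and Lemma~\ref{interpolant_dG} together with Lemma~\ref{trace_inq} generates the correct $h_K$- and $h_E$-weights on $\Phi-\Pi_h\Phi$ since $\trinl\Phi\trinr_2=1$.

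The main obstacle is the bookkeeping of the many leftover non-residual terms produced by the mismatches $A$ vs.\ $A_{\dg}$ and $B$ vs.\ $B_{\dg}$: in particular, the $J(\Pi_h\Phi,\Psi_{\dg})$-type edge contributions must be shown to reduce to the normal-jump indicators $h_E\|[D^2\Psi_{\dg}\nu_E]_E\|^2$ rather than to require the unavailable $H^2$-traces of $\Phi\in\bX$, and the nonlinear cross terms $B_{\dg}(E_h\Psi_{\dg}-\Psi_{\dg},E_h\Psi_{\dg}+\Psi_{\dg},\Phi)$ must be absorbed into jump contributions via Lemmas~\ref{dG_Enrich_apost} and \ref{b_dG_1}. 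The decisive structural lever is the symmetry of $B$ and $B_{\dg}$ in their first two arguments: it is what keeps the identity in Step~2 clean and splits the nonlinearity neatly into a residual plus a quadratic higher-order correction, precisely the advantage over the non-symmetric $C^0$-IPDG trilinear form of \cite{BS_C0IP_VKE} flagged in the introduction.
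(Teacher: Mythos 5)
Your proposal is correct and follows essentially the same route as the paper's proof: enrichment via $E_h$, the continuous inf-sup condition applied to $\Psi-E_h\Psi_{\dg}$, the exact quadratic Taylor expansion of $N$ with the higher-order term $B(\Psi-E_h\Psi_{\dg},\Psi-E_h\Psi_{\dg},\Phi)$ absorbed using the a priori bound of Theorem~\ref{eetimate}, insertion of $N_h(\Psi_{\dg};\Pi_h\Phi)=0$, and element-wise integration by parts to expose the volume and edge indicators. The only cosmetic difference is that you subtract the discrete residual in one bracketed step whereas the paper first passes through $N_h(\Psi_{\dg};\Phi-\Pi_h\Phi)$ plus the $A_{\dg}$- and $B_{\dg}$-differences in $E_h\Psi_{\dg}-\Psi_{\dg}$; these are algebraically equivalent.
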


\begin{proof}
	The Fr\'{e}chet derivative of $ N(\Psi)$ at $\Psi$ in the direction $\Theta\in \bX$ reads
	\begin{align*}
	DN(\Psi;\Theta,\Phi):=A(\Theta,\Phi)+2B(\Psi,\Theta,\Phi)\fl\Phi\in \bX.
	\end{align*}
	Since $\Psi$ is a regular solution, for any $0<\epsilon<\beta$ with $\beta$ from \eqref{inf-sup_dG}, there exists some $\Phi\in \bX$ with $\trinl\Phi\trinr_2=1$ and
	\begin{align}\label{inf_sup_Epsilon}
	(\beta-\epsilon)\trinl \Psi-E_h\Psi_{\dg}\trinr_2\leq DN(\Psi;\Psi-E_h\Psi_{\dg},\Phi).
	\end{align}
	Since $N$ is quadratic, the finite Taylor series is exact and shows
	\begin{align}
	N(E_h\Psi_{\dg};\Phi)&=N(\Psi;\Phi)+DN(\Psi;E_h\Psi_{\dg}-\Psi,\Phi)\notag\\
	&\quad+\half D^2N(\Psi;E_h\Psi_{\dg}-\Psi, E_h\Psi_{\dg}-\Psi,\Phi). \label{Taylor_Exact}
	\end{align}
	Since $N(\Psi;\Phi)=0$ and $D^2N(\Psi;\Theta,\Theta,\Phi)=2B(\Theta,\Theta,\Phi)$ for $\Theta=\Psi-E_h\Psi_{\dg}$, \eqref{inf_sup_Epsilon}-\eqref{Taylor_Exact} plus Lemma~\ref{b_dG_1}.a with boundedness constant $C_b$ lead to
	\begin{align}
	(\beta-\epsilon)\trinl \Psi-E_h\Psi_{\dg}\trinr_2&\leq -N(E_h\Psi_{\dg};\Phi)+B(\Psi-E_h\Psi_{\dg}, \Psi-E_h\Psi_{\dg},\Phi)\notag\\
	&\leq |N(E_h\Psi_{\dg};\Phi)|+C_b\trinl \Psi-E_h\Psi_{\dg}\trinr_2^2.\label{app_infsup}
	\end{align}
	The triangle inequality, \eqref{EhLambda}, and Theorem \ref{eetimate} imply 
	\begin{equation}
	\trinl \Psi-E_h\Psi_{\dg}\trinr_{\dg}\leq \trinl\Psi-\Psi_{\dg}\trinr_{\dg}+\trinl \Psi_{\dg}-E_h\Psi_{\dg}\trinr_{\dg}
	\leq C(1+\Lambda)h^\alpha.\label{Cts2Dis}
	\end{equation}
	With $\epsilon\searrow 0$, \eqref{app_infsup}-\eqref{Cts2Dis} verify
	\begin{align*}
	\left(\beta-(1+\Lambda)CC_bh^\alpha\right)\trinl \Psi-E_h\Psi_{\dg}\trinr_2\leq |N(E_h\Psi_{\dg};\Phi)|.
	\end{align*}
	There exists positive constant $h_4$, such that $h\leq h_4$ implies $\beta-(1+\Lambda)CC_bh^\alpha>0$. Hence, for $h\leq h_4$, the above equation and triangle inequality lead to
	\begin{align*}
	\trinl\Psi-\Psi_{\dg}\trinr_{\dg}& \leq \trinl\Psi-E_h\Psi_{\dg}\trinr_{\dg}+\trinl E_h\Psi_{\dg}-\Psi_{\dg}\trinr_{\dg} \\
	&\lesssim |N(E_h\Psi_{\dg};\Phi)|+\trinl E_h\Psi_{\dg}-\Psi_{\dg}\trinr_{\dg}.
	\end{align*}
	The definitions of $N$ and  $N_h$, $N_h(\Psi_{\dg};\Pi_h\Phi)=0$, the boundedness of $A_{\dg}(\bullet, \bullet)$, $B_{\dg}(\bullet, \bullet, \bullet)$, and Theorem \ref{eetimate} result in
	\begin{align*}
	& N(E_h\Psi_{\dg};\Phi)=A(E_h\Psi_{\dg},\Phi)+B(E_h\Psi_{\dg},E_h\Psi_{\dg},\Phi)-L(\Phi)\notag\\
	&=N_h(\Psi_{\dg};\Phi)+A_{\dg}(E_h\Psi_{\dg}-\Psi_{\dg},\Phi)+B(E_h\Psi_{\dg},E_h\Psi_{\dg},\Phi)-B_{\dg}(\Psi_{\dg},\Psi_{\dg},\Phi)\notag\\
	&=N_h(\Psi_{\dg};\Phi-\Pi_h\Phi)+A_{\dg}(E_h\Psi_{\dg}-\Psi_{\dg},\Phi)\notag\\
	&\qquad+B_{\dg}(E_h\Psi_{\dg}-\Psi_{\dg},E_h\Psi_{\dg},\Phi)+B_{\dg}(\Psi_{\dg},E_h\Psi_{\dg}-\Psi_{\dg},\Phi)\notag\\
	&\lesssim |N_h(\Psi_{\dg};\Phi-\Pi_h\Phi)|+\trinl E_h\Psi_{\dg}-\Psi_{\dg}\trinr_{\dg}.
	\end{align*}
	\noindent The combination of the previous displayed estimates proves 
	\begin{equation}
	\trinl\Psi-\Psi_{\dg}\trinr_{\dg}\lesssim |N_h(\Psi_{\dg};\Phi-\Pi_h\Phi)|+\trinl E_h\Psi_{\dg}-\Psi_{\dg}\trinr_{\dg}. \label{final_apost_Est}
	\end{equation}	
	Abbreviate $\Phi-\Pi_h\Phi=:\boldsymbol{\chi}=(\chi_1,\chi_2)$ and recall
	\begin{align*}
	N_h(\Psi_{\dg};\boldsymbol{\chi})=A_{\dg}(\Psi_{\dg},\boldsymbol{\chi})+B_{\dg}(\Psi_{\dg},\Psi_{\dg},\boldsymbol{\chi})-L_{\dg}(\boldsymbol{\chi}).
	\end{align*}
	\noindent 
	The definition of $A_{\dg}(\bullet,\bullet)$ and an integration by parts with the facts that  $u_{\dg}$ and $v_{\dg}$ are piecewise quadratic polynomials lead to
	\begin{align*}
	&A_{\dg}(\Psi_{\dg},\boldsymbol{\chi})=\sik \left(D^2 u_{\dg}: D^2\chi_1+D^2 v_{\dg}: D^2\chi_2\right)\dx-J(u_{\dg},\chi_1)- J(\chi_1,u_{\dg})\notag\\
	&\qquad\qquad-J(v_{\dg},\chi_2)- J(\chi_2,v_{\dg}) +\sum_{E\in\cE}\frac{\sigma_1}{h_E^3}\int_E\Big{(} [u_{\dg}]_E[\chi_1]_E+[v_{\dg}]_E[\chi_2]_E\Big{)}\ds\notag\\
	&\qquad\qquad\quad+\sum_{E\in\cE}\frac{\sigma_2}{h_E}\int_E\Big{(}[\nabla u_{\dg}\cdot\nu_E]_E[\nabla \chi_1\cdot\nu_E]_E+[\nabla v_{\dg}\cdot\nu_E]_E[\nabla \chi_2\cdot\nu_E]_E\Big{)}\ds\notag\\
	&=\sie\Big{(}[D^2 u_{\dg}\,\nu_E]_E\cdot\langle \nabla\chi_1\rangle_E  +[D^2 v_{\dg}\,\nu_E ]_E\cdot\langle \nabla\chi_2\rangle_E  \Big{)}\ds\notag\\
	&\qquad-\left(J(\chi_1,u_{\dg})+J(\chi_2,v_{\dg})\right)
	+\sum_{E\in\cE}\frac{\sigma_1}{h_E^3}\int_E\Big{(}[u_{\dg}]_E[\chi_1]_E+[v_{\dg}]_E[\chi_2]_E\Big{)}\ds\notag\\
	&\qquad\quad+\sum_{E\in\cE}\frac{\sigma_2}{h_E}\int_E\Big{(}[\nabla u_{\dg}\cdot\nu_E]_E[\nabla \chi_1\cdot\nu_E]_E+[\nabla v_{\dg}\cdot\nu_E]_E[\nabla \chi_2\cdot\nu_E]_E\Big{)}\ds.
	\end{align*}
	\noindent The trace inequality of Lemma~\ref{trace_inq} and interpolation estimates $\|\chi_1\|_{H^m(K)}=\|\varphi_1-\Pi_h\varphi_1\|_{H^m(K)}\leq h_K^{2-m}\|\varphi_1\|_{H^2(K)}$ for $m=1,2$, from Lemma~\ref{interpolant_dG} result in
	\begin{align*}
	&\|h_{\cT}^{-1/2}\langle \nabla\chi_1\rangle_{\cE}  \|_{L^2(\Gamma)}^2\lesssim\sum_{K\in\cT}h_K^{-1}\|\nabla\chi_1\|_{L^2(\partial K)}^2\notag\\
	&\lesssim\sum_{K\in\cT}h_K^{-1}\left(h_K^{-1}\|\chi_1\|_{H^1(K)}^2+h_K\|\chi_1\|_{H^2(K)}^2\right)\lesssim\trinl\Phi\trinr_{2}^2=1. 
	\end{align*}
	This and the Cauchy-Schwarz inequality imply 
	\begin{align*}
	&\sie\Big{(}[D^2 u_{\dg}\,\nu_E]_E\cdot\langle \nabla\chi_1\rangle_E  +[D^2 v_{\dg}\,\nu_E ]_E\cdot\langle \nabla\chi_2\rangle_E  \Big{)}\ds\notag\\
	&\quad\lesssim \bigg{(}\sum_{E\in\cE(\Omega)}\|h_E^{1/2}[D^2 u_{\dg}\,\nu_E]_E\|_{L^2(E)}^2\bigg{)}^{1/2}+\bigg{(}\sum_{E\in\cE(\Omega)}\|h_E^{1/2}[D^2 v_{\dg}\,\nu_E]_E\|_{L^2(E)}^2\bigg{)}^{1/2}. 
	\end{align*}
	The Cauchy-Schwarz inequality, the trace inequality of Lemma~\ref{trace_inq}, and the interpolation of Lemma~\ref{interpolant_dG} lead to
	\begin{align*}
	&J(\chi_1,u_{\dg})+J(\chi_2,v_{\dg})\lesssim\left(\|h_\cE^{-1/2}[\nabla u_{\dg}]_E\|_{\Gamma}^2+\|h_\cE^{-1/2}[\nabla v_{\dg}]_E\|_{\Gamma}^2\right)^{1/2}\trinl {\Phi}\trinr_{2}. 
	\end{align*}
	Similar arguments for the penalty terms result in
	\begin{align*}
	&\sum_{E\in\cE}\frac{\sigma_1}{h_E^3}\int_E\Big{(} [u_{\dg}]_E[\chi_1]_E+[v_{\dg}]_E[\chi_2]_E\Big{)}\ds\lesssim \left(\|h_\cE^{-3/2}[u_{\dg}]_{\cE}\|_{L^2(\Gamma)}^2+\|h_\cE ^{-3/2}[v_{\dg}]_{\cE}\|_{L^2(\Gamma)}^2\right)^{1/2},\\
	&\sum_{E\in\cE}\frac{\sigma_2}{h_E}\int_E\Big{(}[\nabla u_{\dg}\cdot\nu_E]_E[\nabla \chi_1\cdot\nu_E]_E+[\nabla v_{\dg}\cdot\nu_E]_E[\nabla \chi_2\cdot\nu_E]_E\Big{)}\ds\notag\\
	&\qquad\qquad\qquad\qquad\lesssim \left(\|h_\cE^{-1}[\nabla u_{\dg}\cdot\nu_E]_{\cE}\|_{L^2(\Gamma)}^2+\|h_\cE ^{-1}[\nabla v_{\dg}\cdot\nu_E]_{\cE}\|_{L^2(\Gamma)}^2\right)^{1/2}.
	\end{align*}
	\noindent The definition of $B_{\dg}(\bullet,\bullet,\bullet)$ yields
	\begin{align*}
	B_{\dg}(\Psi_{\dg},\Psi_{\dg},\boldsymbol{\chi})
	&= -\sik[u_{\dg},v_{\dg}]\chi_1\dx+\half\sik[u_{\dg},u_{\dg}]\chi_2\dx.
	\end{align*}
    The Cauchy-Schwarz inequality and Lemma~\ref{interpolant_dG} show
	\begin{align*}
	&B_{\dg}(\Psi_{\dg},\Psi_{\dg},\boldsymbol{\chi})-L_{\dg}(\boldsymbol{\chi})\lesssim \sum_{K\in\cT} h_K^4\Big{(}\|f+[u_{\dg},v_{\dg}]\|_{L^2(K)}^2+\|[u_{\dg},u_{\dg}]\|_{L^2(K)}^2\Big{)}. 
	\end{align*}

	\noindent A substitution of the previous six estimates in \eqref{final_apost_Est} and Lemma~\ref{dG_Enrich_apost} establish \eqref{reliability_Est_dG}.
\end{proof}
\noindent The efficiency of the error estimator obtained in Theorem~\ref{reliability_dG} can be established using the standard bubble function techniques \cite{Verfurth,Georgoulis2011}; further details are therefore omitted.
\begin{thm}[Efficiency]\label{efficiency_dG}
	Let $\Psi=(u,v)\in\bX$ be a regular solution to \eqref{vform_cts_dG} and let $\Psi_{\dg}=(u_{\dg},v_{\dg})\in\Stb$ be the local solution to \eqref{vform_d_dG}. There exists a positive constant $C_{\rm eff}$ independent of $h$ but dependent on $\Psi$ such that
	\begin{align}\label{efficiency_Est_dG}
	\sum_{K\in\cT}\eta_K^2+\sum_{E\in\cE(\Omega)}\eta_E^2\leq C_{\rm eff}^2\Big{(}\trinl\Psi-\Psi_{\dg}\trinr_{\dg}^2+{\rm osc}^2(f)\Big{)},
	\end{align}
where $\displaystyle{\rm osc}^2(f):=\sum_{K\in\cT }h_K^4\|f-f_h\|_{L^2(K)}^2$ and $f_h$ denotes the piecewise average of $f$.
\end{thm}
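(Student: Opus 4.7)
The plan is to apply the classical bubble-function technique of Verfürth, adapted to the biharmonic setting as in \cite{Georgoulis2011}, and to deal with the two easy families of jump contributions by a direct comparison with the exact solution $\Psi\in\bhto$. Split the estimator into four groups: (i) the boundary-type jump terms $h_E^{-3}\|[u_{\dg}]_E\|^2,\,h_E^{-3}\|[v_{\dg}]_E\|^2,\,h_E^{-1}\|[\nabla u_{\dg}]_E\|^2,\,h_E^{-1}\|[\nabla v_{\dg}]_E\|^2$; (ii) the volume residuals $h_K^4\|f+[u_{\dg},v_{\dg}]\|_{L^2(K)}^2$ and $h_K^4\|[u_{\dg},u_{\dg}]\|_{L^2(K)}^2$; and (iii) the edge residuals $h_E\|[D^2u_{\dg}\,\nu_E]_E\|^2$ and $h_E\|[D^2v_{\dg}\,\nu_E]_E\|^2$.

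Group (i) is immediate: since $u,v\in\hto$ satisfy $[u]_E=[v]_E=0$ and $[\nabla u]_E=[\nabla v]_E=0$ on every $E\in\cE$ (with the convention used for boundary edges), these jump terms coincide with the corresponding jumps of $u-u_{\dg}$ and $v-v_{\dg}$ and are therefore controlled, term by term, by the definition of $\|\cdot\|_{\dg}$, hence by $\trinl\Psi-\Psi_{\dg}\trinr_{\dg}^2$. For group (ii), I would follow Verfürth: given $K\in\cT$, let $b_K$ be a quartic interior bubble with $b_K\in H^2_0(K)$ and normalize so that $\|\psi\|_{L^2(K)}^2\approx\int_K\psi^2 b_K\,dx$ for $\psi\in P_r(K)$. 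Writing the data oscillation $f-f_h$ out and setting $r_K:=f_h+[u_{\dg},v_{\dg}]$, I would test the weak form $a(u,\varphi)-\int_K[u,v]\varphi=\int_Kf\varphi$ with $\varphi=r_Kb_K\in H^2_0(K)\subset X$ extended by zero. Integration by parts on $K$ together with $\Delta^2u_{\dg}=0$ (piecewise $P_2$) and the standard inverse estimates $|r_Kb_K|_{H^2(K)}\lesssim h_K^{-2}\|r_K\|_{L^2(K)}$ give
\begin{align*}
\|r_K\|_{L^2(K)}^2&\lesssim |u-u_{\dg}|_{H^2(K)}\,h_K^{-2}\|r_K\|_{L^2(K)}+\|[u,v]-[u_{\dg},v_{\dg}]\|_{L^2(K)}\|r_Kb_K\|_{L^2(K)},
\end{align*}
and hence $h_K^2\|r_K\|_{L^2(K)}\lesssim|u-u_{\dg}|_{H^2(K)}+h_K^2\|[u,v]-[u_{\dg},v_{\dg}]\|_{L^2(K)}$. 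The nonlinear remainder is estimated through $[u,v]-[u_{\dg},v_{\dg}]=[u-u_{\dg},v]+[u_{\dg},v-v_{\dg}]$, the Sobolev embedding $H^{2+\alpha}(\Omega)\hookrightarrow W^{2,4}(\Omega)$ for the exact solution, and Lemma~\ref{auxest} for $u_{\dg}$; an $f-f_h$ triangle inequality returns the oscillation term. The term $h_K^2\|[u_{\dg},u_{\dg}]\|_{L^2(K)}$ is handled identically starting from $a(v,\varphi)+\int[u,u]\varphi=0$.

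Group (iii) uses edge bubbles. Fix an interior edge $E$ shared by $K_\pm$ and put $\omega_E=K_+\cup K_-$. Let $b_E$ be the standard cubic edge bubble on $\omega_E$, with $b_E$ and $\nabla b_E$ vanishing on $\partial\omega_E$, extend the polynomial $\psi_E:=[D^2u_{\dg}\,\nu_E]_E$ constantly along $\nu_E$, and test with $\varphi=\psi_E\cdot(\mathrm{something})\,b_E^2\in H^2_0(\omega_E)\subset X$ chosen so that its normal derivative on $E$ reproduces $\psi_E$ (and its trace and other tangential derivative vanish on $E$). Integration by parts on $\omega_E$ twice, together with $\Delta^2u_{\dg}=0$ piecewise, contributes only the desired edge jump $\int_E[D^2u_{\dg}\,\nu_E]_E\cdot\langle \nabla\varphi\rangle\,ds\approx\|\psi_E\|_{L^2(E)}^2$; the continuous equation and the scaling $|\varphi|_{H^2(\omega_E)}\lesssim h_E^{-3/2}\|\psi_E\|_{L^2(E)}$ combined with an inverse trace inequality then yield $h_E^{1/2}\|[D^2u_{\dg}\,\nu_E]_E\|_{L^2(E)}\lesssim |u-u_{\dg}|_{H^2(\omega_E)}+h_{\omega_E}^2\|f-f_h\|_{L^2(\omega_E)}+\text{(nonlinear terms)}$. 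Summing over $K$ and $E$ and invoking finite overlap of the edge patches produces \eqref{efficiency_Est_dG}.

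The main obstacle is the nonlinear $[u,v]$ coupling: the bubble argument naturally creates the factor $\|[u,v]-[u_{\dg},v_{\dg}]\|_{L^2(K)}$, and to bound this by the energy error I must rely on the pre-asymptotic bound $\|u_{\dg}\|_{L^\infty}+\|u_{\dg}\|_{W^{1,4}(\cT)}\lesssim\|u_{\dg}\|_{\dg}\lesssim 1$ (Lemma~\ref{auxest} together with Theorem~\ref{eetimate}) and on the extra regularity $u,v\in H^{2+\alpha}(\Omega)$, which makes $C_{\mathrm{eff}}$ depend on $\Psi$; this is the source of the $\Psi$-dependence advertised in the statement.
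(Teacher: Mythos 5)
The paper itself offers no proof of this theorem: it simply appeals to ``standard bubble function techniques'' and cites Verf\"urth and Georgoulis et al. Your proposal is an elaboration of exactly that route --- trivial absorption of the penalty jumps into $\trinl\Psi-\Psi_{\dg}\trinr_{\dg}$ since $[u]_E=[\nabla u]_E=0$ for $u\in\hto$, interior bubbles for the volume residual (note $f_h+[u_{\dg},v_{\dg}]$ is piecewise constant, so the bubble norm-equivalence applies), and edge bubbles for $[D^2u_{\dg}\,\nu_E]_E$ --- so structurally you are doing what the paper intends.

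The one step that does not go through as written is your treatment of the nonlinear remainder. After the bubble argument you must bound $h_K^2\|[u,v]-[u_{\dg},v_{\dg}]\|_{L^2(K)}$ by the energy error, and you propose H\"older with $H^{2+\alpha}(\Omega)\hookrightarrow W^{2,4}(\Omega)$ for the exact solution and Lemma~\ref{auxest} for $u_{\dg}$. But the bracket is quadratic in \emph{second} derivatives, while Lemma~\ref{auxest} only controls $\|\cdot\|_{L^\infty}$ and $\|\cdot\|_{W^{1,4}(\cT)}$; and in the term $[u-u_{\dg},v]$ the pairing forces either $D^2v\in L^\infty$ (not available for $\alpha\le 1$) or $\|D^2(u-u_{\dg})\|_{L^4(K)}$, which is \emph{not} controlled by $|u-u_{\dg}|_{H^2(K)}$ --- splitting it as $\|D^2u\|_{L^4}+\|D^2u_{\dg}\|_{L^4}$ destroys the error structure and leaves uncancelled $O(h^{2+\alpha-1/2})$ terms that cannot be absorbed into $\trinl\Psi-\Psi_{\dg}\trinr_{\dg}+\mathrm{osc}(f)$ (these need not dominate the error). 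The standard repair uses the divergence-free structure $\divc\,\cof(D^2\eta)=0$, so that for $\varphi\in H^2_0(K)$ one has $\int_K[\eta,\chi]\varphi=-\int_K\cof(D^2\eta)\nabla\chi\cdot\nabla\varphi$; this lets one error factor enter only through first derivatives, where $\nabla v\in L^\infty(\Omega)$ and $\|\nabla(v-v_{\dg})\|_{L^4(\cT)}\lesssim\|v-v_{\dg}\|_{\dg}$ (Lemma~\ref{auxest}) apply, while the discrete factor is handled by the inverse estimate $\|D^2u_{\dg}\|_{L^\infty(K)}\lesssim h_K^{-1}\|D^2u_{\dg}\|_{L^2(K)}$ absorbed by the weight $h_K^2$ together with the a~priori bound $\|u_{\dg}\|_{\dg}\lesssim 1$. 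The analogous adjustment is needed in the edge-bubble step. With that modification your outline is complete; without it, the nonlinear estimate you state cannot be closed.
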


\medskip
\section{A $C^0$ interior penalty method}\label{sect:C0IP}
The analysis of this paper extends to a $C^0$ interior penalty method for the von K\'{a}rm\'{a}n equations formally for $\sigma_1\to \infty$ when $\sigma_1$ disappears but the trial and test functions become continuous. The novel scheme is the above dG method but with ansatz test function restricted to $\Stb\cap \bH^1_0(\Omega)=:\bS^2_0(\cT)\equiv S^2_0(\cT)\times S^2_0(\cT)$ and the norm $\trinl\bullet\trinr_{\ip}$ is $\trinl\bullet\trinr_{\dg}$ with restriction to $\bS^2_0(\cT) $ excludes $\sigma_1$ (which has no meaning as it is multiplied by zero) and $\trinl\bullet\trinr_{\widetilde{\ip}}$ is $\trinl\bullet\trinr_{h}$ with restriction to $\bS^2_0(\cT)$.

\bigskip
Since the discrete functions are globally continuous for this case, the bilinear form  $a_{\dg}(\bullet,\bullet)$ simplifies for some positive penalty parameter $\sigma_2$, for $\eta_{\ip},\chi_{\ip}\in S^2_0(\cT)$,  to
\begin{align}
&a_{\ip}(\eta_{\ip},\chi_{\ip}):=\sik D^2\eta_{\ip}:D^2\chi_{\ip}\dx-\se \langle D^2\eta_{\ip}\nu_E\rangle_E  \cdot[\nabla\chi_{\ip}]_E\ds\notag\\
&\qquad-\se \langle D^2\chi_{\ip}\nu_E\rangle_E  \cdot[\nabla\eta_{\ip}]_E\ds+\sum_{E\in\cE}\frac{\sigma_2}{h_E}\int_E[\nabla\eta_{\ip}\cdot\nu_E]_E[\nabla\chi_{\ip}\cdot\nu_E]_E\ds.
\end{align}

This novel $C^0$ interior penalty ($C^0$-IP) method for the von K\'{a}rm\'{a}n equations seeks $u_{\ip},v_{\ip}\in S^2_0(\cT)$ such that
\begin{align}
&a_{\ip}(u_{\ip},\varphi_1)+b_{\dg}(u_{\ip},v_{\ip},\varphi_1)+b_{\dg}(v_{\ip},u_{\ip},\varphi_1)=l_{\dg}(\varphi_1)\fl\varphi_1\in S^2_0(\cT), \label{wformdg1_IP}\\
& a_{\ip}(v_{\ip},\varphi_2)-b_{\dg}(u_{\ip},u_{\ip},\varphi_2)=0\fl\varphi_2\in S^2_0(\cT). \label{wformdg2_IP}
\end{align}
The term related to jump which is of the form $[\eta_{\ip}]_E$ for each $\eta_{\ip}\in S^2_0(\cT)$ vanishes in the $C^0$-IP method and this simplifies the analysis. 
\begin{thm}[Energy norm estimate]
	Let  $\Psi$ be a regular solution to \eqref{vform_cts_dG}  and let $\Psi_{\ip}=(u_{\ip},v_{\ip})$ be the solution to \eqref{wformdg1_IP}-\eqref{wformdg2_IP}. For sufficiently large $\sigma_2$ and sufficiently small $h$, it holds
	\begin{equation*}
	\trinl\Psi-\Psi_{\ip}\trinr_{\ip}\leq C h^{\alpha}.
	\end{equation*}
\end{thm}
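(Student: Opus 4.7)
The plan is to exploit the observation already stated in Section~\ref{sect:C0IP}, namely that the $C^0$-IP scheme coincides with the dG scheme restricted to the subspace $\bS^2_0(\cT)\subset\Stb$ of globally continuous piecewise quadratics. On this subspace every jump $[\eta_{\ip}]_E$ across an interior edge vanishes, so the $\sigma_1$-penalty contributes nothing, the two norms $\trinl\bullet\trinr_{\ip}$ and $\trinl\bullet\trinr_{\dg}$ agree, and $a_{\ip}=a_{\dg}|_{\bS^2_0(\cT)\times\bS^2_0(\cT)}$. The entire machinery of Sections~3--4 therefore specializes from $\Stb$ to $\bS^2_0(\cT)$ with essentially no change, and the conclusion follows from the restriction of Theorem~\ref{eetimate}.

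More concretely, I would first verify that the auxiliary tools still apply to the conforming subspace. Boundedness and ellipticity of $a_{\ip}$ are inherited from the corresponding results for $a_{\dg}$ (Lemma~\ref{ellipticity}): the proof only relies on the $\sigma_2$-term, which is retained. For the interpolation operator $\Pi_h$ of Lemma~\ref{interpolant_dG}, I take the standard $P_2$ Lagrange interpolant, which under the Sobolev embedding $H^{2+\alpha}(\Omega)\hookrightarrow C^0(\overline\Omega)$ for $\alpha>1/2$ maps $X\cap H^{2+\alpha}(\Omega)$ into $S^2_0(\cT)$ while still satisfying \eqref{local_int_est}--\eqref{interpolation_est}. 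The enrichment operator $E_h:\St\to S_4(\cT)$ of Lemma~\ref{enrichment_dG}, restricted to $S^2_0(\cT)$, remains valid because its estimates depend only on the jumps that actually survive; in particular, the only nonzero jump contribution in \eqref{enrich_apost} is $\|h_{\cE}^{3/2-m}[\nabla\varphi_{\ip}]_{\cE}\|_{L^2(\Gamma)}$. Lemma~\ref{intermediate} and Lemma~\ref{b_dG_1} carry over verbatim.

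With these ingredients in place, I would reproduce the discrete inf-sup argument of Theorem~\ref{dis_inf_sup_thm} inside $\bS^2_0(\cT)$: build the biharmonic auxiliary functions $\boldsymbol\xi,\boldsymbol\eta\in\bX$ by \eqref{defn_xi_dG}--\eqref{defn_Eta_dG} (with $B_{\dg}$ unchanged), invoke the continuous inf-sup \eqref{inf-sup_dG}, and close with the enrichment estimate \eqref{EhLambda} and Lemma~\ref{intermediate}. This yields an $h$-independent $\widehat{\beta}_{\ip}>0$ with
\[
\widehat{\beta}_{\ip}\leq \inf_{\substack{\Theta_{\ip}\in\bS^2_0(\cT)\\ \trinl\Theta_{\ip}\trinr_{\ip}=1}}\sup_{\substack{\Phi_{\ip}\in\bS^2_0(\cT)\\ \trinl\Phi_{\ip}\trinr_{\ip}=1}}\bigl(A_{\ip}(\Theta_{\ip},\Phi_{\ip})+2B_{\dg}(\Psi,\Theta_{\ip},\Phi_{\ip})\bigr),
\]
and (as in Lemma~\ref{regular_dG}) its perturbed analogue with $\Psi$ replaced by $\Pi_h\Psi$. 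The nonlinear map $\mu_{\ip}:\bS^2_0(\cT)\to\bS^2_0(\cT)$ defined in direct analogy with \eqref{defnmu_dG} is then well-posed, and the proofs of Theorem~\ref{mapball2ball} and Theorem~\ref{contractionthm}, together with the Banach fixed point theorem, produce a unique discrete solution $\Psi_{\ip}\in\bB_{R(h)}(\Pi_h\Psi)$ with $\trinl\Pi_h\Psi-\Psi_{\ip}\trinr_{\ip}\lesssim h^\alpha$. A triangle inequality combined with \eqref{interpolation_est} delivers the claimed bound $\trinl\Psi-\Psi_{\ip}\trinr_{\ip}\leq Ch^\alpha$.

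The main (indeed only) obstacle is checking that the enrichment/interpolation operators respect the conforming setting. The Lagrange $P_2$ interpolant does preserve homogeneous Dirichlet boundary values and land in $S^2_0(\cT)$ whenever $\alpha>1/2$, and $E_h$ restricted to $S^2_0(\cT)$ satisfies the analogue of \eqref{EhLambda} with $\|\bullet\|_{\dg}$ replaced by $\|\bullet\|_{\ip}$ because the $[\varphi_{\ip}]_E$ jump terms drop out on both sides. Once these two observations are recorded, every estimate in the proofs of Theorems~\ref{dis_inf_sup_thm}--\ref{eetimate} transfers mechanically, and the theorem follows.
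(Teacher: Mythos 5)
Your proposal is correct and follows essentially the same route as the paper, whose proof simply asserts that Lemmas~\ref{interpolant_dG}--\ref{intermediate}, the boundedness of the trilinear form, and Theorems~\ref{dis_inf_sup_thm}--\ref{NewtonThm} all specialize from $\Stb$ to the conforming subspace $\bS^2_0(\cT)$ without change. You are in fact slightly more careful than the paper in checking that the interpolant can be chosen to land in $S^2_0(\cT)$ (via the $P_2$ Lagrange interpolant for $\alpha>1/2$) and that the enrichment estimates survive with only the gradient-jump terms, which are exactly the points one must verify for the restriction argument to go through.
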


\smallskip
\begin{proof}
The Lemmas~\ref{interpolant_dG}-\ref{intermediate} hold as it is and the boundedness results in Lemma~\ref{b_dG_1} for $b_{\ip}(\bullet,\bullet,\bullet)$ can be modified to
\begin{align*}
b_{\ip}(\eta,\chi,\varphi)\lesssim
\begin{cases}
\|\eta\|_{\ip}\|\chi\|_{\ip}\|\varphi\|_{\ip} \fl \eta,\chi,\varphi\in X+S^2_0(\cT),\\
\|\eta\|_{2+\alpha}\|\chi\|_{\ip}\|\varphi\|_{1}\fl\eta\in X\cap H^{2+\alpha}(\Omega)\text{ and }\chi,\varphi\in X+S^2_0(\cT).
\end{cases}
\end{align*}
\smallskip
Theorems~\ref{dis_inf_sup_dG}-\ref{NewtonThm} follow in the same lines and hence, $a~priori$ error estimates in energy norm can be established without any additional difficulty.	
\end{proof}
For $K \in \cT$ and $E \in \cE(\Omega)$, $a~posteriori$ error estimates for the $C^0$-interior penalty method \eqref{wformdg1_IP}-\eqref{wformdg2_IP} lead to
the volume estimator $\eta_K$ and the edge estimator $\eta_E$ defined by
\begin{align*}
\eta_K^2&:= h_K^4\Big{(}\|f+[u_{\ip},v_{\ip}]\|_{L^2(K)}^2+\|[u_{\ip},u_{\ip}]\|_{L^2(K)}^2\Big{)},\\
\eta_E^2&:={h_E\left(\|[D^2 u_{\ip}\,\nu_E]_E\|_{L^2(E)}^2+\|[ D^2 v_{\ip}\nu_E]_E\|_{L^2(E)}^2\right)}\\
&\qquad{+h_E^{-1}\left(\|[\nabla u_{\ip}]\|_{L^2(E)}^2+\|[\nabla v_{\ip}]\|_{L^2(E)}^2\right).}
\end{align*}
\begin{thm}\label{reliability_IP} Let $\Psi=(u,v)\in\bX$ be a regular solution to \eqref{vform_cts_dG} and $\Psi_{\ip}=(u_{\ip},v_{\ip})\in S^2_0(\cT)\times S^2_0(\cT)$ be the solution to \eqref{wformdg1_IP}-\eqref{wformdg2_IP}. For sufficiently large $\sigma_2$ and sufficiently small $h$, there exist $h$-independent positive constants $C_{\rm rel}$ and $C_{\rm eff}$ such that
	\begin{align}\label{rel_eff_IP}
	C_{\rm rel}^{-2}\trinl\Psi-\Psi_{\ip}\trinr_{\ip}^2&\leq \sum_{K\in\cT}\eta_K^2+\sum_{E\in\cE(\Omega)}\eta_E^2\leq C_{\rm eff}^2\trinl\Psi-\Psi_{\ip}\trinr_{\ip}^2+{\rm osc}^2(f).
	\end{align}
\end{thm}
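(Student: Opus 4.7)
The plan is to mirror the two dG a~posteriori results (Theorem~\ref{reliability_dG} and Theorem~\ref{efficiency_dG}) and to exploit the crucial simplification that $\Psi_{\ip}\in\bH^1_0(\Omega)$ makes all function-value jumps $[u_{\ip}]_E$ and $[v_{\ip}]_E$ vanish; this is exactly why the $h_E^{-3}$ contribution disappears from the simplified $\eta_E$ and why $\sigma_1$ has dropped out of $\trinl\bullet\trinr_{\ip}$. The previous theorem of this section already gives $\trinl\Psi-\Psi_{\ip}\trinr_{\ip}\lesssim h^{\alpha}$, so the small-$h$/large-$\sigma_2$ hypotheses needed in the dG proofs are available.

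For the reliability bound, I first use the inf-sup condition \eqref{inf-sup_dG} for the Fr\'echet derivative $DN(\Psi;\Theta,\Phi)=A(\Theta,\Phi)+2B(\Psi,\Theta,\Phi)$ on $\Psi-E_h\Psi_{\ip}\in\bX$ to pick $\Phi\in\bX$ with $\trinl\Phi\trinr_2=1$ and $(\beta-\epsilon)\trinl\Psi-E_h\Psi_{\ip}\trinr_2\le DN(\Psi;\Psi-E_h\Psi_{\ip},\Phi)$, then invoke the exact quadratic Taylor expansion $N(E_h\Psi_{\ip};\Phi)=DN(\Psi;E_h\Psi_{\ip}-\Psi,\Phi)+B(\Psi-E_h\Psi_{\ip},\Psi-E_h\Psi_{\ip},\Phi)$ together with $N(\Psi;\Phi)=0$. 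The quadratic remainder is absorbed for sufficiently small $h$ using $\trinl\Psi-E_h\Psi_{\ip}\trinr_{\ip}\lesssim h^{\alpha}$, reducing the task to estimating $|N(E_h\Psi_{\ip};\Phi)|+\trinl E_h\Psi_{\ip}-\Psi_{\ip}\trinr_{\ip}$. I then use Galerkin orthogonality $N_h(\Psi_{\ip};\Pi_h\Phi)=0$ for $\Pi_h\Phi\in\bS^2_0(\cT)$ and the continuity vs.\ discrete-consistency splitting to reduce $N(E_h\Psi_{\ip};\Phi)$ to $N_h(\Psi_{\ip};\Phi-\Pi_h\Phi)$ plus terms controlled by $\trinl E_h\Psi_{\ip}-\Psi_{\ip}\trinr_{\ip}$. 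A piecewise integration by parts in $a_{\ip}(u_{\ip},\chi_1)$ and $a_{\ip}(v_{\ip},\chi_2)$ for $\chi=(\chi_1,\chi_2):=\Phi-\Pi_h\Phi$ produces the interior-edge residuals $[D^2u_{\ip}\nu_E]_E$ and $[D^2v_{\ip}\nu_E]_E$ (the volume Laplacian-of-Hessian vanishes since $u_{\ip},v_{\ip}$ are piecewise quadratic); the penalty term yields the $h_E^{-1}[\nabla\bullet\cdot\nu_E]_E$ contribution, while the $B_{\dg}-L_{\dg}$ pairing produces the volume residual $f+[u_{\ip},v_{\ip}]$ and $[u_{\ip},u_{\ip}]$ weighted by $h_K^2$. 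Trace and interpolation estimates from Lemma~\ref{trace_inq} and Lemma~\ref{interpolant_dG} bound these by $\eta_K$ and $\eta_E$, and finally Lemma~\ref{dG_Enrich_apost} controls $\trinl E_h\Psi_{\ip}-\Psi_{\ip}\trinr_{\ip}$ by $\|h_{\cE}^{-1/2}[\nabla\Psi_{\ip}]_{\cE}\|_{L^2(\Gamma)}$, which is exactly the remaining edge contribution in $\eta_E$.

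For the efficiency bound, I follow the standard bubble-function strategy of Verf\"urth~\cite{Verfurth} (cf.\ also~\cite{Georgoulis2011,BGS10}). The volume term $h_K^2\|f+[u_{\ip},v_{\ip}]\|_{L^2(K)}$ is treated by multiplying the elementwise residual by an interior bubble $b_K$, integrating by parts twice to reintroduce $D^2(u-u_{\ip}):D^2(b_K\cdot r_K)$ with $r_K$ the polynomial residual, and invoking the trilinear continuity of $B$ on $\bH^2_0(\Omega)$; the analogous argument handles the $[u_{\ip},u_{\ip}]$ component. The edge jumps $[D^2u_{\ip}\nu_E]_E$ and $[D^2v_{\ip}\nu_E]_E$ are tested against an edge bubble $b_E$ extended by zero outside the edge patch $\omega_E$ and combined with an elementwise integration by parts, producing the patch-local error $\trinl\Psi-\Psi_{\ip}\trinr_{\ip,\omega_E}$ plus the already-controlled volume residual. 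The jumps $[\nabla u_{\ip}]_E$ and $[\nabla v_{\ip}]_E$ are directly dominated by $h_E^{-1/2}\|\nabla(u-u_{\ip})\|_{L^2(E)}$ via a trace inequality applied to $[\nabla(u-u_{\ip})]_E=-[\nabla u_{\ip}]_E$, yielding the weighted $h_E^{-1}$ contribution; no $[u_{\ip}]_E$ term arises because $u_{\ip}$ is continuous. The oscillation $\osc^2(f)$ enters exactly when replacing $f$ by its piecewise average inside the volume bubble argument.

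The main obstacle I anticipate is the interaction between the trilinear form and the bubble-function localization in the efficiency estimate: $b_{\dg}(u_{\ip},v_{\ip},b_Kr_K)$ must be transformed by two integrations by parts into an expression involving $D^2(u-u_{\ip})$ and $D^2(v-v_{\ip})$, and the resulting boundary traces on $\partial K$ only vanish because $b_K\equiv 0$ there, so the scaling of the bubble ($\|b_Kr_K\|_{H^2(K)}\lesssim h_K^{-2}\|r_K\|_{L^2(K)}$) is what produces the correct $h_K^2$ weight. Checking that this scaling combines with the $L^\infty$ continuity of the von K\'arm\'an bracket (via Lemma~\ref{auxest}) and the a priori bound $\trinl\Psi_{\ip}\trinr_{\ip}\lesssim 1$ to give constants independent of $h$ but depending on $\Psi$ is the routine but delicate part; because the argument is by now standard, I would at most sketch it and refer to \cite{Verfurth,Georgoulis2011,BGS10} for the technical details.
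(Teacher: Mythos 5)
Your proposal is correct and follows essentially the same route as the paper, whose entire proof of Theorem~\ref{reliability_IP} is the remark that it ``follows as in Theorems~\ref{reliability_dG}--\ref{efficiency_dG}''; you simply carry out that transfer explicitly, correctly noting that continuity of $\Psi_{\ip}$ kills the $[u_{\ip}]_E$, $[v_{\ip}]_E$ jumps (hence the missing $h_E^{-3}$ term in $\eta_E$) and that the efficiency part is the standard bubble-function argument the paper also omits. No gap.
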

\begin{proof}
	\noindent This follows as in  Theorems~\ref{reliability_dG}-\ref{efficiency_dG}, and hence the details are omitted for brevity.
\end{proof}
\begin{rem}\label{Brenner_b}
	\noindent The $C^0$-IP formulation of \cite{BS_C0IP_VKE} chooses the trilinear form $b_{\widetilde{\ip}}(\bullet,\bullet,\bullet)$ as
	\begin{align} 
	b_{\widetilde{\ip}}(\eta_{\ip},\chi_{\ip},\varphi_{\ip})&:=-\half\sik [\eta_{\ip},\chi_{\ip}]\varphi_{\ip}\dx\notag\\
	&\qquad+\half\sum_{E\in \cE(\Omega)}\int_E
	\left[\langle \cof(D^2\eta_{\ip})\rangle_E  \nabla\chi_{\ip}\cdot\nu_E\right]_E\varphi_{\ip}\ds
	\label{BS_IP_Form}
	\end{align}
		\noindent for all $\eta_{\ip},\chi_{\ip},\varphi_{\ip}\in S^2_0(\cT)$.
	For the $C^0$-IP formulation \eqref{wformdg1_IP}-\eqref{wformdg2_IP} with $	b_{\widetilde{\ip}}(\bullet,\bullet,\bullet)$ replacing $	b_{\ip}(\bullet,\bullet,\bullet)$ and $\|\bullet\|_{h}\equiv\|\bullet\|_{\widetilde{\ip}}$, the efficiency of the estimator is still open, for difficulties caused by the the non-residual type average term $\langle \cof(D^2\eta_{\ip})\rangle_E$.
\end{rem}

\section{Numerical experiments}
This section is devoted to numerical experiments to investigate the practical parameter choice and adaptive mesh-refinements.

\subsection{Preliminaries}
The discrete solution to \eqref{vform_d_dG} is obtained using the Newton method defined in \eqref{NewtonIterate_dG} with initial guess $\Psi_{\dg}^0\in\Stb$ computed as the solution of the biharmonic part of the von K\'{a}rm\'{a}n equations, i.e., $\Psi_{\dg}^0\in\Stb$ solves
\begin{align}\label{BihInitial}
A_{\dg}(\Psi_{\dg}^0,\Phi_{\dg})=L(\Phi_{\dg})\fl\Phi_{\dg}\in\Stb.
\end{align}
Let the $\ell$-th level error (for example, in the norm $\trinl\Psi-\Psi_{\dg}\trinr_{\dg}$) and the number of degrees of freedom (ndof) be denoted by $e_{\ell}$ and $\texttt{ndof}(\ell)$, respectively. The $\ell$-th level empirical rate of convergence is defined by
\begin{equation*}
    \texttt{rate}(\ell):=2\times \log \big(e_{\ell-1}/e_{\ell} \big)/\log \big(\texttt{ndof}(\ell)/\texttt{ndof}(\ell-1) \big)\text{ for } \ell=1,2,3,\ldots
\end{equation*}
In all the numerical tests, the Newton iterates converge within $4$ steps with the stopping criteria $\trinl\Psi_{\dg}^{5}-\Psi_{\dg}^{j-1}\trinr_{\dg}<10^{-8}$ for $j\in\bN$, where $\Psi_{\dg}^{5}$ denotes the discrete solution generated by Newton iterates at $5$-th iteration. The penalty parameters for the DGFEM and $C^0$-IP are consistently chosen 
as $\sigma_1=\sigma_2=20$ in all numerical examples and appear as sensitive as in the case of the linear biharmonic equations. 

\subsection{Example on unit square domain}\label{example_UnitSq}
The exact solution to \eqref{vkedG}  is $u(x,y)=x^2y^2(1-x)^2(1-y)^2$ and 
$v(x,y)=\sin^2(\pi x)\sin^2(\pi y) $ on the unit square $\Omega$
with elliptic regularity index $\alpha=1$ and corresponding data $f$ and $g$. 
Figure~\ref{fig:UnitSq_mesh} displays  the initial mesh and its successive 
red-refinements lead to sequence of  DGFEM solutions on the quasi-uniform meshes 
with the errors $\trinl  u-u_{\dg}\trinr_{\dg}$ and $\trinl v-v_{\dg}\trinr_{\dg}$ and with their
empirical convergence rates in Table~\ref{Table:UnitSq_DG}. The empirical convergence rate  with respect to dG norm is one as predicted from Theorem~\ref{eetimate}. 
Table~\ref{Table:UnitSq_C0IP} displays the errors  and empirical rates of convergence for $C^0$-IP method of Section~\ref{sect:C0IP} and the linear empirical rate of convergence as expected from the theory is observed.
\begin{figure}
	\begin{center}
		\subfloat[]{\includegraphics[height=2.1in,width=2.25in,angle=0]{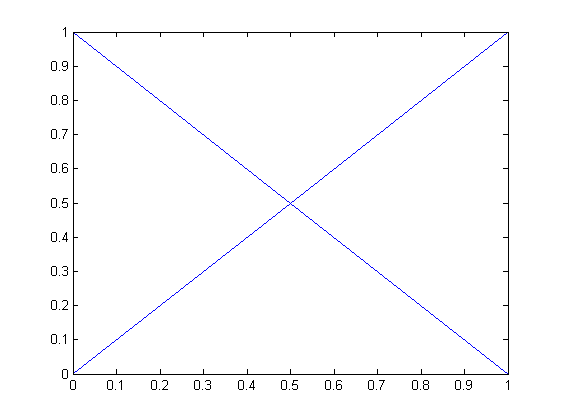}}
		\subfloat[]{\includegraphics[height=2.1in,width=2.25in,angle=0]{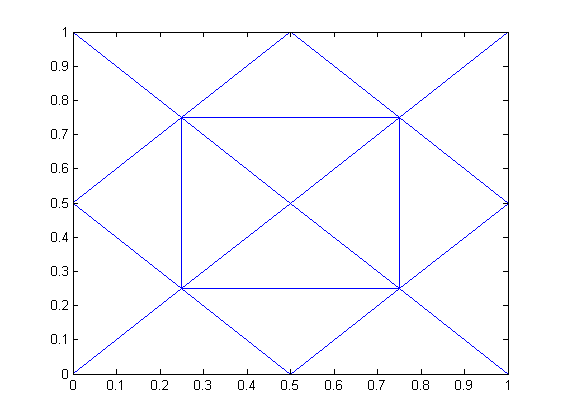}}
		\caption{(a) Initial triangulation and (b) refined triangulation for unit square domain}
		\label{fig:UnitSq_mesh}
	\end{center}
\end{figure} 

\begin{table}[h!]
	\begin{center}
		\begin{tabular}{|c| r r |r r|}
			\hline
\texttt{ndof}&$\|u-u_{\dg}\|_{\dg}$&\texttt{rate}&$\|v-v_{\dg}\|_{\dg}$&\texttt{rate}\\
			\hline	
80   &0.0577836157 &- &12.7526558280 &-\\
352  &0.0310832754 &0.8369 & 7.4399026157 &0.7274\\
1472 &0.0137172531 &1.1434 & 3.4111200164 &1.0900\\	
6016 &0.0061972815 &1.1287 & 1.4811727729 &1.1851\\	
24320&0.0029481076 &1.0637 & 0.6762547625 &1.1225\\		
\hline			
		\end{tabular}
	\end{center}
\caption{Errors and  empirical rates of convergence for DGFEM of Example~\ref{example_UnitSq}}
\label{Table:UnitSq_DG}
\end{table}		
\begin{table}[h!]
	\begin{center}
		\begin{tabular}{|c| r r |r r|}
			\hline
\texttt{ndof}&$\|u-u_{\ip}\|_{\dg}$&\texttt{rate}&$\|v-v_{\ip}\|_{\ip}$&\texttt{rate}\\
			\hline	

25   &0.0461238157 &- &11.2692445826 &-\\
113  &0.0218618458 &0.9898 & 4.9052656166 &0.7274\\
481  &0.0103962085 &1.0263 & 2.4140866870 &0.9789\\	
1985 &0.0049162143 &1.0566 & 1.1624980996 &1.0310\\	
8065 &0.0023958471 &1.0254 & 0.5703487012 &1.0158\\		
32513&0.0011879209 &1.0064 & 0.2834173301 &1.0032\\
\hline			
	 \end{tabular}
	\end{center}
\caption{Errors and  empirical rates of convergence for $C^0$-IP method of Example~\ref{example_UnitSq}}
\label{Table:UnitSq_C0IP}
\end{table}

\subsection{Example on L-shaped domain}\label{singular_xmpl_dg} 
In polar coordinates centered at the re-entering corner of the  
 L-shaped domain $\Omega=(-1,1)^2 \setminus\big{(}[0,1)\times(-1,0]\big{)}$,
 the slightly singular functions 
$\displaystyle
	u(r,\theta)=v(r,\theta):=(1-r^2 \cos^2\theta)^2 (1-r^2 \sin^2\theta)^2 r^{1+\alpha}g_{\alpha,\omega}(\theta)$ 	with the abbreviation 
$	g_{\alpha,\omega}(\theta):=$
	\begin{align*}
	&\left(\frac{1}{\alpha-1}\sin\big{(}(\alpha-1)\omega\big{)}-\frac{1}{\alpha+1}\sin\big{(}(\alpha+1)\omega\big{)}\right)\times\Big{(}\cos\big{(}(\alpha-1)\theta\big{)}-\cos\big{(}(\alpha+1)\theta\big{)}\Big{)}\\
	&-\left(\frac{1}{\alpha-1}\sin\big{(}(\alpha-1)\theta\big{)}-\frac{1}{\alpha+1}\sin\big{(}(\alpha+1)\theta\big{)}\right)\times\Big{(}\cos\big{(}(\alpha-1)\omega\big{)}-\cos\big{(}(\alpha+1)\omega\big{)}\Big{)},
	\end{align*}
are defined for the angle $\omega=\frac{3\pi}{2}$ and the parameter
$\alpha= 0.5444837367$ as the non-characteristic 
	root of $\sin^2(\alpha\omega) = \alpha^2\sin^2(\omega)$.
With the loads $f$ and $g$ according to \eqref{vkedG}
the DGFEM solutions are computed on a sequence of quasi-uniform meshes. 
Tables~\ref{Table:Lshaped_DG} and ~\ref{Table:Lshaped_C0IP}  
display the errors and the expected reduced
empirical convergence rates for the DGFE and the $C^0$-IP method. 
\begin{table}[h!]
	\begin{center}
		\begin{tabular}{|c| r r |r r|}
			\hline
\texttt{ndof}&$\|u-u_{\dg}\|_{\dg}$&\texttt{rate}&$\|v-v_{\dg}\|_{\dg}$&\texttt{rate}\\
			\hline		
112  &15.2305842271 &- &15.2984276544 &-\\
512  & 8.0774597721 &0.8346 & 7.8781411333 &0.8733\\
2176 & 4.1100793036 &0.9338 & 4.1061231414 &0.9006\\	
8960 & 2.1174046583 &0.9372 & 2.1241422224 &0.9314\\	
36352& 1.1536162513 &0.8761 & 1.1556461785 &0.8781\\		
\hline			
		\end{tabular}
	\end{center}
\caption{Errors and  empirical rate of convergence for DGFEM of Example~\ref{singular_xmpl_dg}}
\label{Table:Lshaped_DG}
\end{table}
\begin{table}[h!]
	\begin{center}
		\begin{tabular}{|c| r r |r r|}
			\hline
\texttt{ndof}&$\|u-u_{\ip}\|_{\ip}$&\texttt{rate}&$\|v-v_{\ip}\|_{\ip}$&\texttt{rate}\\
			\hline	
33   &10.3829891715 &-&10.317434853 &-\\
161  & 6.7186797840 &0.5492 & 6.6122910205 &0.5614\\
705  & 3.5483398032 &0.8645 & 3.5119042793 &0.8569\\	
2945 & 1.8327303743 &0.9242 & 1.8246293348 &0.9159\\	
12033& 0.9771452705 &0.8936 & 0.9756119144 &0.8895\\
48641& 0.5448961726 &0.8362 & 0.5446327773 &0.8346\\		
\hline			
		\end{tabular}
	\end{center}
\caption{Errors and  empirical rate of convergence for $C^0$-IP method of Example~\ref{singular_xmpl_dg}}
\label{Table:Lshaped_C0IP}
\end{table}

\subsection{Adaptive mesh-refinement}\label{AdaptMesh_Esample}
For the  L-shaped domain of the preceding Example~\ref{singular_xmpl_dg}
the constant load function $f \equiv 1$, the unknown solution 
to the \vket \eqref{vkedG} is approximated by an adaptive mesh-refining algorithm. 

Given an initial  triangulation $\cT_0$ run the steps
{\bf SOLVE, ESTIMATE, MARK} and {\bf REFINE} successively 
for different levels $\ell=0,1,2,\ldots$.

\medskip
\noindent {\bf SOLVE} Compute the solution of DGFEM $\Psi_{\ell}:=\Psi_{\dg}$ (resp. $C^0$-IP $\Psi_{\ell}:=\Psi_{\ip}$ ) with respect to $\cT_{\ell}$ and number of degrees of freedom given by \texttt{ndof}.\\

\medskip
\noindent{\bf ESTIMATE} Compute local contribution of the error estimator from \eqref{reliability_Est_dG} (resp. from \eqref{rel_eff_IP}) 
\begin{equation*}
\eta^2_{\ell}(K):=\eta_K^2+\sum_{E\in \cE(K)}\eta_E^2 \quad\text{ for all } K\in\cT_{\ell}.
\end{equation*}

\medskip
\noindent{\bf MARK} The D\"{o}rfler marking chooses a minimal subset $\mathcal{M}_{\ell}\subset \cT_{\ell}$ such that 
$$0.3\, \sum_{K\in\mathcal{T}_{\ell}}\eta^2_{\ell}(K)\leq \sum_{K\in\mathcal{M}_{\ell}}\eta^2_{\ell}(K). $$

\medskip
\noindent{\bf REFINE} Compute the closure of $\mathcal{M}_{\ell}$ and generate a new triangulation $\cT_{\ell+1}$ using newest vertex bisection \cite{Stevenson08}.

\medskip

\noindent  Figure~\ref{fig:Lshape_f1_Conv}(a) displays the  convergence history of 
the {\it a posteriori } error estimator  as a function of number of degrees of freedom for uniform and adaptive mesh-refinement of DGFEM and $C^0$-IP method.
 
Figure~\ref{fig:Lshape_f1_Conv}(b) depicts the adaptive mesh for $C^0$-IP method generated by the above adaptive algorithm for level $\ell=19$, and it illustrates the adaptive mesh-refinement near the re-entering corner. The suboptimal empirical convergence rate for uniform mesh-refinement is improved to an optimal empirical convergence rate 0.5 via adaptive mesh-refinement.

\medskip
\noindent To show the reliability and efficiency of the estimators for DGFEM and $C^0$-IP, another test has been performed over L-shaped domain for the example described in Example~\ref{singular_xmpl_dg}. Figure~\ref{fig:DG_C0IP_Lshaped_adap0p3}(a) displays the convergence history of the error and the {\it a posteriori } error estimator as a function of number of degrees of freedom for uniform and adaptive mesh-refinement of DGFEM and $C^0$-IP method. 
Figure~\ref{fig:DG_C0IP_Lshaped_adap0p3}(b) displays the convergence history of 
the error and the {\it a posteriori } error estimator for uniform and 
adaptive mesh-refinement of $C^0$-IP method. 
The ratio between error and estimator $C_{\text{rel}}$  is plotted in 
Figure~\ref{fig:DG_C0IP_Lshaped_adap0p3}(a)-(b) and almost constant as a numerical evidence of the reliability and efficiency of the estimators for DGFEM 
and $C^0$-IP methods of Theorem~\ref{reliability_Est_dG}-\ref{efficiency_dG} and Theorem~\ref{reliability_IP}.

\begin{figure}
	\begin{center}
	\subfloat[]{\includegraphics[height=3in,width=3.5in,angle=0]{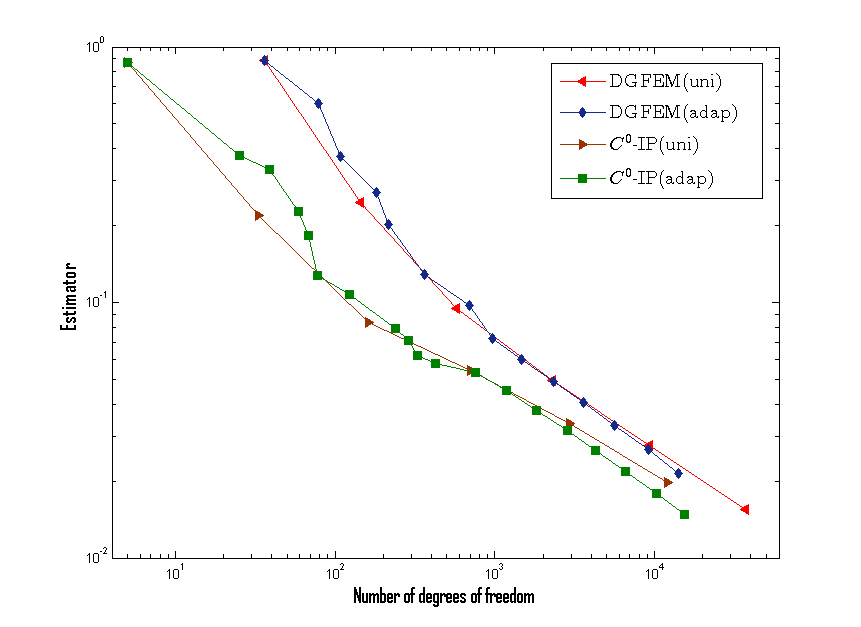}}
	\subfloat[]{\includegraphics[height=3in,width=3in,angle=0]{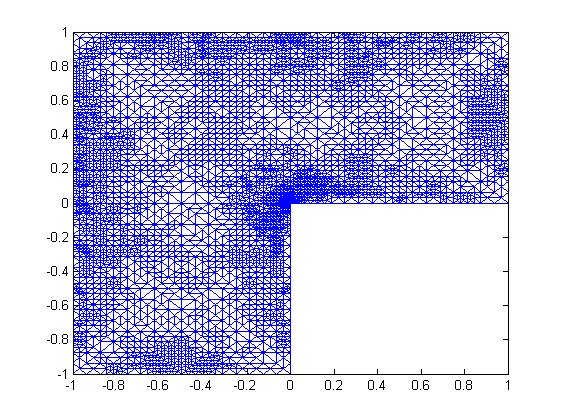}}
		\caption{(a) Convergence history for DGFEM and $C^0$-IP method of Example~\ref{AdaptMesh_Esample} with $f\equiv 1$ and (b) adaptive mesh for $C^0$-IP method at the level $\ell=19$.}
		\label{fig:Lshape_f1_Conv}
	\end{center}
\end{figure}

\begin{figure}
	\begin{center}
		\subfloat[]{\includegraphics[height=3in,width=3.2in,angle=0]{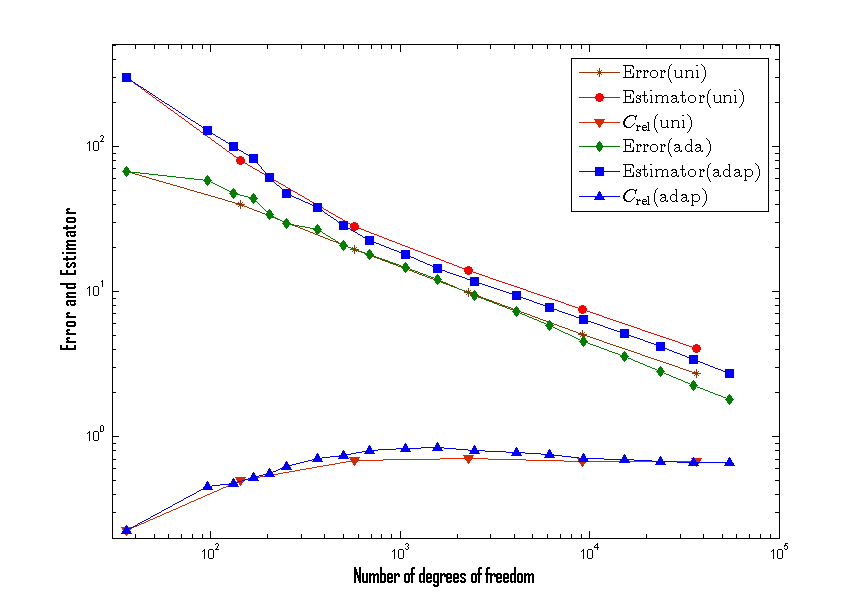}}
		\subfloat[]{\includegraphics[height=3in,width=3.2in,angle=0]{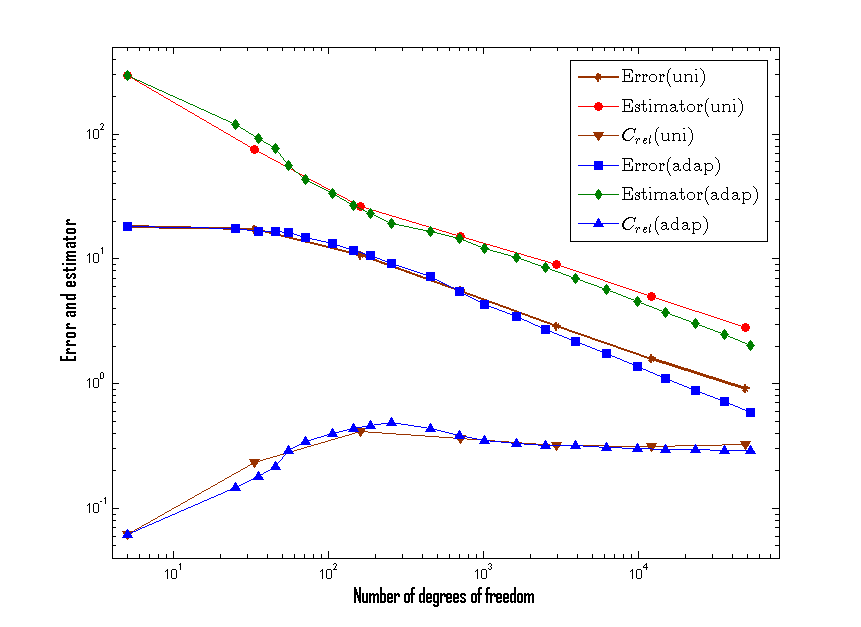}}
		\caption{Convergence history of {\it a posteriori} error control
		for (a) DGFEM and (b) $C^0$-IP method}
		\label{fig:DG_C0IP_Lshaped_adap0p3}
	\end{center}
\end{figure}

\section{Conclusions}
This paper analyzes a discontinuous Galerkin finite element method for the approximation of regular solutions of von K\'{a}rm\'{a}n equations. A priori error estimate in energy norm and a posteriori error control which motivates an adaptive mesh-refinement are deduced under the minimal regularity assumption on the exact solution. The analysis suggests a novel $C^0$-interior penalty method and provides {\it a priori} and {\it a posteriori} error control for the energy norm.  Moreover, the analysis can be extended to $hp$ discontinuous Galerkin finite element methods with additional jump terms for higher order derivatives of ansatz and trial functions under additional regularity assumptions on the exact solution.

%

\section*{Acknowledgements}
{The first and third authors acknowledge the support of National Program on Differential Equations: Theory, Computation \& Applications (NPDE-TCA) and Department of Science \& Technology (DST) Project No. SR/S4/MS:639/09. The second author would like to thank National Board for Higher Mathematics (NBHM) and IIT Bombay for various financial supports.
}
\bibliographystyle{amsplain}
\bibliography{refs}

\providecommand{\bysame}{\leavevmode\hbox to3em{\hrulefill}\thinspace}
\providecommand{\MR}{\relax\ifhmode\unskip\space\fi MR }
\providecommand{\MRhref}[2]{%
  \href{http://www.ams.org/mathscinet-getitem?mr=#1}{#2}
}
\providecommand{\href}[2]{#2}
\begin{thebibliography}{10}

\bibitem{BabuskaSuri87}
I.~Babu{\v{s}}ka and M.~Suri, \emph{The {$h$}-{$p$} version of the finite
  element method with quasi-uniform meshes}, RAIRO Mod\'el. Math. Anal.
  Num\'er. \textbf{21} (1987), no.~2, 199--238.

\bibitem{Berger}
M.~S. Berger, \emph{On von {K\'{a}rm\'{a}n} equations and the buckling of a
  thin elastic plate, {I} the clamped plate}, Comm. Pure Appl. Math.
  \textbf{20} (1967), 687--719.

\bibitem{BergerFife66}
M.~S. Berger and P.~C. Fife, \emph{On von {K\'{a}rm\'{a}n} equations and the
  buckling of a thin elastic plate}, Bull. Amer. Math. Soc. \textbf{72} (1966),
  no.~6, 1006--1011.

\bibitem{BergerFife}
\bysame, \emph{Von {K\'{a}rm\'{a}n} equations and the buckling of a thin
  elastic plate. {II} plate with general edge conditions}, Comm. Pure Appl.
  Math. \textbf{21} (1968), 227--241.

\bibitem{BlumRannacher}
H.~Blum and R.~Rannacher, \emph{On the boundary value problem of the biharmonic
  operator on domains with angular corners}, Math. Methods Appl. Sci.
  \textbf{2} (1980), no.~4, 556--581.

\bibitem{BoffiBrezziFortin13}
D.~Boffi, F.~Brezzi, and M.~Fortin, \emph{Mixed finite element methods and
  applications}, Springer Series in Computational Mathematics, vol.~44,
  Springer, Heidelberg, 2013.

\bibitem{Braess}
D.~Braess, \emph{Finite elements, theory, fast solvers, and applications in
  elasticity theory}, 3rd ed., Cambridge, 2007.

\bibitem{BGS10}
S.~C. Brenner, T.~Gudi, and L.-Y. Sung, \emph{An a posteriori error estimator
  for a quadratic {$C^0$}-interior penalty method for the biharmonic problem},
  IMA J. Numer. Anal. \textbf{30} (2010), no.~3, 777--798.

\bibitem{BS_C0IP_VKE}
S.~C. Brenner, M.~Neilan, A.~Reiser, and L.-Y. Sung, \emph{A {$C^0$} interior
  penalty method for a von {K\'{a}rm\'{a}n} plate}, Numer. Math. (2016), 1--30.

\bibitem{BrennerOwensSung08}
S.~C. Brenner, L.~Owens, and L.-Y. Sung, \emph{A weakly over-penalized
  symmetric interior penalty method}, Electron. Trans. Numer. Anal. \textbf{30}
  (2008), 107--127.

\bibitem{Brenner}
S.~C. Brenner and L.~R. Scott, \emph{The mathematical theory of finite element
  methods}, 3rd ed., Springer, 2007.

\bibitem{Brezzi}
F.~Brezzi, \emph{Finite element approximations of the von {K\'{a}rm\'{a}n}
  equations}, RAIRO Anal. Num\'{e}r. \textbf{12} (1978), no.~4, 303--312.

\bibitem{BrezziFortin91}
F.~Brezzi and M.~Fortin, \emph{Mixed and hybrid finite element methods},
  Springer Series in Computational Mathematics, vol.~15, Springer-Verlag, New
  York, 1991.

\bibitem{BrezziRappazRaviart80}
F.~Brezzi, J.~Rappaz, and P.~A. Raviart, \emph{Finite-dimensional approximation
  of nonlinear problems. {I}. {B}ranches of nonsingular solutions}, Numer.
  Math. \textbf{36} (1980), no.~1, 1--25.

\bibitem{BrezziRappazRaviart81}
F.~Brezzi, J.~Rappaz, and P.-A. Raviart, \emph{Finite-dimensional approximation
  of nonlinear problems. {II}. {L}imit points}, Numer. Math. \textbf{37}
  (1981), no.~1, 1--28.

\bibitem{CCGMNN_Semilinear}
C.~Carstensen, G.~Mallik, and N.~Nataraj, \emph{Nonconforming finite element
  discretization for semilinear problems with trilinear nonlinearity},
  Submitted.

\bibitem{CiarletPlates}
P.~G. Ciarlet, \emph{Mathematical elasticity: Theory of plates}, vol.~II,
  North-Holland, Amsterdam, 1997.

\bibitem{DiPetroErn12}
D.~A. Di~Pietro and A.~Ern, \emph{Mathematical aspects of discontinuous
  {G}alerkin methods}, Math\'ematiques \& Applications (Berlin), vol.~69,
  Springer, Heidelberg, 2012.

\bibitem{Georgoulis2011}
E.~H. Georgoulis, P.~Houston, and J.~Virtanen, \emph{An {\it a posteriori}
  error indicator for discontinuous {G}alerkin approximations of fourth-order
  elliptic problems}, IMA J. Numer. Anal. \textbf{31} (2011), no.~1, 281--298.

\bibitem{Gudi10}
T.~Gudi, \emph{A new error analysis for discontinuous finite element methods
  for linear elliptic problems}, Math. Comp. \textbf{79} (2010), no.~272,
  2169--2189.

\bibitem{Knightly}
G.~H. Knightly, \emph{An existence theorem for the von {K\'{a}rm\'{a}n}
  equations}, Arch. Ration. Mech. Anal. \textbf{27} (1967), no.~3, 233--242.

\bibitem{LasisSuli03}
A.~Lasis and E.~Suli, \emph{Poincar\'{e}-type inequality for broken {S}obolev
  spaces}, Isaac Newton Institute for Mathematical Sciences, Preprint No.
  NI03067-CPD.

\bibitem{GMNN_BFS}
G.~Mallik and N.~Nataraj, \emph{Conforming finite element methods for the von
  {K\'{a}rm\'{a}n} equations}, Adv. Comput. Math. \textbf{42} (2016), no.~5,
  1031--1054.

\bibitem{GMNN_Morley}
\bysame, \emph{A nonconforming finite element approximation for the von
  {K\'{a}rm\'{a}n} equations}, ESAIM Math. Model. Numer. Anal. \textbf{50}
  (2016), no.~2, 433--454.

\bibitem{Miyoshi}
T.~Miyoshi, \emph{A mixed finite element method for the solution of the von
  {K\'{a}rm\'{a}n} equations}, Numer. Math. \textbf{26} (1976), no.~3,
  255--269.

\bibitem{Morley68}
L.~S.~D. Morley, \emph{The triangular equilibrium element in the solution of
  plate bending problems}, Aero. Quart. \textbf{19} (1968), 149--169.

\bibitem{MozoSuli03}
I.~Mozolevski and E.~S{\"u}li, \emph{A priori error analysis for the
  {$hp$}-version of the discontinuous {G}alerkin finite element method for the
  biharmonic equation}, Comput. Methods Appl. Math. \textbf{3} (2003), no.~4,
  596--607.

\bibitem{MozoSuliBosing07}
I.~Mozolevski, E.~S{\"u}li, and P.~R. B{\"o}sing, \emph{{$hp$}-version a priori
  error analysis of interior penalty discontinuous {G}alerkin finite element
  approximations to the biharmonic equation}, J. Sci. Comput. \textbf{30}
  (2007), no.~3, 465--491.

\bibitem{Reinhart}
L.~Reinhart, \emph{On the numerical analysis of the von {K\'{a}rm\'{a}n}
  equations: mixed finite element approximation and continuation techniques},
  Numer. Math. \textbf{39} (1982), no.~3, 371--404.

\bibitem{Stevenson08}
R.~Stevenson, \emph{The completion of locally refined simplicial partitions
  created by bisection}, Math. Comp. \textbf{77} (2008), no.~261, 227--241
  (electronic).

\bibitem{MozoSuli07}
E.~S{\"u}li and I.~Mozolevski, \emph{{$hp$}-version interior penalty {DGFEM}s
  for the biharmonic equation}, Comput. Methods Appl. Mech. Engrg. \textbf{196}
  (2007), no.~13-16, 1851--1863.

\bibitem{Verfurth}
R.~Verf\"{u}rth, \emph{A review of a posteriori error estimation and adaptive
  mesh-refinement techniques}, Wiley-Teubner, 1996.

\end{thebibliography}

\end{document}